\newtheorem{theorem}{Theorem}[section]
\newtheorem{lemma}[theorem]{Lemma}
\newtheorem{prop}[theorem]{Proposition}
\theoremstyle{definition}
\newtheorem{definition}[theorem]{Definition}
\newtheorem{example}[theorem]{Example}
\newtheorem{corollary}{Corollary}[theorem]
\newtheorem{remark}[theorem]{Remark}
\numberwithin{equation}{section}
\begin{document}

% \title[short text for running head]{full title}
\title{Geometry of weighted Homogeneous spaces}

%    Only \author and \address are required; other information is
%    optional.  Remove any unused author tags.
% Author names
% Emails of authors
\email{mrahmati@cimat.mx (M. Reza Rahmati), gflores@cio.mx (G. Flores)}%

%    author one information
% \author[short version for running head]{name for top of paper}
\author{Mohammad Reza Rahmati, Gerardo Flores}
\address{}
\curraddr{}
\email{mrahmati@cimat.mx (M. Reza Rahmati), gflores@cio.mx (G. Flores)}
\thanks{}

%    author two information
%\author{}
%\address{}
%\curraddr{}
%\email{}
%\thanks{}

%    \subjclass is required.
\subjclass[2010]{ }

\date{}

\dedicatory{}

%    Abstract is required.
\begin{abstract}
In this paper, we define the weighted homogeneous space (WHS), denoted by $\frac{G}{P}[\psi_H]$ where $\psi_H$ is weight function defined on the set of simple roots of $G$, by an element $H$ in the highest Weyl chamber. The weight function $\psi_H$ describes the action of the maximal torus $T$ on different Bruhat cells and is well behaved via the change of coordinates defined by the action of the Weyl group $W$. The major effort in this text is to prove basic algebraic and geometric properties of a weighted homogeneous space. The definition can be compared with an existing version given by Reid-Corti \cite{CR}. Additionally, we express $\frac{G}{P}[\psi_H]$ as a whole compact quotient of $G/P$ by a certain action of a finite abelian group. Besides, it is presented a criterion when two WHS with possibly different weight systems are isomorphic. The criteria give a simple method to understand the regular maps between two WHS's, defined by matrices with specific polynomial entries. We also explain invariant K\"ahler differentials on WHS by using certain potential functions on $G$. Our contribution is a generalization of the results presented in \cite{Al, AL, AKQ}. For that, we explain how the weights affect different computations of chern classes of line bundles given in \cite{AL, AKQ}. Finally, we provide a result on the coordinate ring of a WHS by cluster algebras associated to weighted quivers. Specifically, we show that the coordinate ring of a WHS is a weighted cluster algebra of finite type. In this case, the corresponding Dynkin quiver is equipped with a weight function defined on the vertices where the mutations also affect the weights. We present an embedding of a WHS in a product of weighted projective spaces, showing that the coordinate ring is a weighted graded algebra.
\end{abstract}

\maketitle

%    Text of article.

\section{Introduction} \label{sec:Introduction}
A homogeneous space is the quotient of a reductive Lie group $G$ by a parabolic subgroup $P$, denoted by $X=G/P$. It can also be described as a projective subvariety of the projectivization of some irreducible representation of $G$. A weighted analog of a homogeneous space has been studied by Abe et al. and Reid-Corti \cite{AbM1, AbM2, CR, QAN}. The definition considers an embedding of a weighted homogeneous space (WHS) into a weighted projective space. Their definition adds a high enough number to all the original weights appearing by the torus action to make all the weights positive (because we naturally expect that a WHS should be compact). This paper provides a new definition of weighted homogeneous spaces (WHS) that is different from the previous definition above. Furthermore, we express the weighted homogeneous space as a global compact quotient of ordinary homogeneous space in an alternative result. This is our first contribution after adjusting the definition. In contrast to the definition by Cort-Reid our definition records all the weights as a major invariant of the WHS. It also allows stating the conditions under which two WHS with possibly different weight systems are algebraically isomorphic. 

One may also compare a weighted Homogeneous space to the weighted projective space (WPS). A weighted projective space is an orbifold, i.e., it can be covered by orbifold charts, where each is isomorphic to the quotient of an affine space by a finite group. A weighted projective space is also a toric variety. A comprehensive study of the weighted projective space from a toric point of view can be found in \cite{RT}. A weighted homogeneous space also has a local orbifold structure. A WHS is not, in general, a toric variety for obvious reasons. The maximal dimension of a torus acting on a Homogeneous space (weighted or not) has a strictly smaller dimension than the space. In the paper, we present a fan to explain the weight system in a WHS. Nevertheless, this fan never is a toric fan because of of formal enlargement of its dimension through the definition. It just explains how the weights are set on independent coordinates. 

Besides, we are interested in studying the coordinate ring of a WHS. In a basic proposition we show this ring is a weighted graded homogeneous ring. We present some features of this in Section \ref{sec:WeightedHomogSp}. We also attend a technical approach toward the computation of the coordinate ring of a WHS in Section \ref{sec:CoordRing-WHS} by the theory of cluster algebras associated to quivers. The associated quivers are of finite type and are given by assigning an orientation of a Dynkin diagram. The exchange relations associated to mutation flips can be determined by the Cartan matrix of the Lie group $G$.
\subsection{Motivation}
The original idea behind this work is some analogy between the projective spaces and homogeneous spaces. The same way we consider a weighted projective space as a kind of generalization to the projective space, we may also wish to look for a way to define the weighted homogeneous variety. In some way, we have tried to build up the WHS from the model of WPS. A version of the definition for the WHS exists in the literature initially due to Abe et al. \cite{AbM1, AbM2}, and also by Corti-Reid \cite{CR}. They first consider a $\mathbb{C}^*$-fibration, which lifts a homogeneous space $G/P$, to the $Cone (G/P) \smallsetminus 0$ and then try to assign some positive weights to the coordinates on the cone. In their analysis, the assignment of the weights on different charts can produce negative weights on the coordinates.
For this reason, they oblige to add a sufficiently big integer $u \gg 0$ to all the weights on all the charts. In this way, the data of the weights and its relation to the combinatorial Lie theory of the Lie group $G$ is lost in their method. One of our significant efforts in this text was to fix this problem. In fact, we make a different definition for the weight system and will fix the problem above through some explanation of the Weyl group action on the Bruhat cells. Moreover, we will explain that the problem of arising negative weights is be solved in our definition. Our first result expresses the WHS $\frac{G}{P}[\psi_H]$ as the whole compact quotient of the homogeneous space $G/P$. This automatically resolves the question of well definedness of the weights and its direct affect on the compactness of a WHS. 

The data of the weights on different coordinate access and their connection to the root system of the Lie group $G$ is crucial in our approach. We want to find a classification theorem to determine when two weight systems on WHS define the same geometric object. Specifically we present a basic combinatorial criterion to characterize when two WHS $\frac{G}{P}[\psi_{H_1}]$ and $\frac{G}{P}[\psi_{H_2}]$. One may also study further that how usual geometric tools such as differential forms and regular maps are  defined on WHS. Our construction naturally allows to determine when there can exist well defined algebraic maps $\frac{G}{P}[\psi_{H_1}] \to \frac{G}{P}[\psi_{H_2}]$, in terms of the weights. Although this can also be done in the definition by Corti-Reid in \cite{CR}, but the data of the original weights is not accessible in their definition, nor even the integer parameter $u$ is not definitely defined.

The WHS $\frac{G}{P}[\psi_{H_1}]$ has a natural embedding into a weighted projective space defined as the canonical embedding of an ample divisor.  
In the way we define a WHS, we can also embed it in a product of weighted projective spaces obtained from highest weight representations of the Lie group $G$ with highest weight to be its fundamental weights. Then we can embed it in a single projective space via the Segre embedding. However, for our purpose, we do not insist on embedding it in a single projective space. In fact, in this way, we can get a simple understanding of the coordinate ring of a WHS as a weighted graded algebra. By \cite{GLS, Sc} the coordinate ring of any partial flag variety is a cluster algebra, one may expect to prove that the coordinate ring of a WHS is a weighted cluster algebra. The complexity of the definition of a cluster algebra forces a more detailed explanation of this concept. One approach to cluster algebras is by quivers. In this case, the mutations of the cluster variables are defined through the mutations of the quiver. Cluster algebras classify as finite or infinite types. In the case of a homogeneous space, we are involved with quivers that come from the Dynkin diagram of a Lie group $G$. So they have finite type. A natural expectation is to describe the coordinate ring of the weighted homogeneous space $\frac{G}{P} \left [\psi_H \right ]$ by doing mutations on a weighted quiver. There are several definitions of weighted quivers in the literature. We employ a definition given in \cite{OT} for weighted quivers which are equipped with a weight function $w:Q \to \mathbb{Z}$ on its vertices. The difficulty here is to understand the effect of mutations on the weights. 

\subsection{Related Works}
A definition for the weighted Grassmannian was first introduced and studied by Corti-Reid \cite{CR}, following the work of Grojnowski. As a projective variety, it has at worst orbifold singularity and a torus action. In their approach, WHS is defined in a weighted projective space by the wellknown Pl\"ucker relations. A systematic study of the cohomology of the weighted homogeneous space via the Schubert calculus is given in \cite{AbM1, AbM2}, and \cite{QAN}. Some of the techniques of these works will also apply to our setup without changes. However, as was mentioned in the introduction section, we will not follow their definition in this paper. The major effort in this text is to provide a new definition for the WHS, such that the weights can be connected to the combinatorial structure of the homogeneous space. In fact, we need the information on the weights as part of the geometry of a WHS. In studying the coordinate ring of a WHS, via the weighted quivers the mutations will exchange the weights in the same manner as that the action of the Weyl group on Bruhat cells will make a change of coordinates on the homogeneous space. 
\subsection{Contribution}\label{subsec:contribution}
The main contribution of this paper is a new definition of weighted homogeneous space (WHS). With that aim, we provide two major new definitions for a WHS. The first definition is by a non-compact quotient through the action of a 1-parameter subgroup on the affine cone over a homogeneous space $G/P$. In contrast, the second definition is by a compact quotient of the projective variety $G/P$. The two definitions are compatible, and we prove this in a commutative triangle, [see \eqref{eq:diagram_first} below]. The explanations are as follows. In Section \ref{sec:WeightedHomogSp} we present a notion of weighted homogeneous spaces $\frac{G}{P} \left [\psi_H \right ]$, where $G$ is a complex semisimple Lie group, $P$ is a parabolic subgroup of $G$, $H$ is an element of highest Weyl chamber, and $\psi_H$ is the weight system defined by $H$, [see also Section \ref{sec:WeightedHomogSp} below]. Notice that a weight system gives rise to an action of the maximal torus on the Bruhat cells. Let $T$ be a maximal torus of $G$, $\mathfrak{h}$ be the corresponding Cartan subalgebra of $\mathfrak{g}=Lie(G)$, and $R=R_- \cup R_+$ is the associated root system, while $\Pi$ are the simple roots for $\mathfrak{g}$. Suppose 
\begin{equation}
C_+=\{ H \in \mathfrak{h}_{\mathbb{R}} | \langle \alpha, H \rangle  \geq 0, \forall \alpha \in \Pi\}    
\end{equation}
is the highest Weyl chamber. Assume $H \in C_+$ is so that $\langle -,H \rangle $ is a dominant integral weight. The weight system is given by an integral dominant weight function $\psi_H:\Pi \to \mathbb{Z}_{\geq 0},\ H \in C_+$. The map $\psi_H$ defines a 1-parameter subgroup of $G$ and determines an action of the maximal torus $T$ of $G$ on the Bruhat cells of $G/P$, [see section \ref{sec:WeightedHomogSp}]. Specifically, we define the weighted homogeneous space
as
\begin{equation} \label{eq:WHS}
\frac{G}{P} \big [\psi_H \big ]=\left [Cone \left (\dfrac{ G}{P} \right )\diagdown 0 \right ] \big /_HT .
\end{equation} 
The action of the maximal torus $T$ is defined on each open Bruhat cell separately. The precise meaning of the quotient in \eqref{eq:WHS} is through the action of $T$ on the Bruhat cells [see section \ref{sec:WeightedHomogSp}]. 

Weighted homogeneous spaces (WHS) are in many senses related to ordinary homogeneous spaces $G/P$. IN this sense, we have a locally finite quotient map $\pi_H: \frac{G}{P} \longrightarrow \frac{G}{P} \left [\psi_H \right ]$, where the map $\pi_H$ is continuous in quotient topology. However, it has critical values at the singularities of $\frac{G}{P} \left [\psi_H \right ]$. The singularities are finite quotient singularities and provide the structure of an orbifold on WHS. The quotient map $\pi_H$ can be explained on the Bruhat cells, as affine charts of $G/P$. The Bruhat decomposition of $G/P$ descends to the one for $\frac{G}{P} \left [\psi_H \right ]$ by the map $\pi_H$. In other words, we briefly have
\begin{equation}
\frac{G}{P} \big [\psi_H \big ]=\coprod_{w \in W/W_L} X_w[\psi_H], \qquad \pi_H:X_w \to X_w[\psi_H]
\end{equation}
where $X_w$'s are the Bruhat cells [see Sections \ref{sec:HomogSp} and \ref{sec:WeightedHomogSp} for notations]. Compared to standard homogeneous manifolds, a WHS behaves very much like the weighted projective spaces (WPS) compared to projective spaces. 

We provide a second definition for the WHS $\frac{G}{P} [\psi_H ]$ by expressing it as a quotient of the projective variety $G/P$ by a finite Abelian group. In this case, the underlying topological space of $\frac{G}{P} [\psi_H ]$ is automatically compact. We also show that the two definitions are compatible by proving the commutativity of the diagram,
\begin{equation}\label{eq:diagram_first}
\begin{tikzcd}
 & G/P \arrow{dr}{/ \times_{\alpha}\mathbb{Z}_{\langle \alpha, H \rangle} } \\
\left [\mathcal{C}one \left (\dfrac{G}{P} \right )\smallsetminus 0 \right ]\arrow{ur}{/\mathbb{C}^*} \arrow{rr}{/[\psi_H]} && \dfrac{G}{P} \left [\psi_H \right ],
\end{tikzcd}
\end{equation}
which can be compared with its analog for weighted projective spaces (WPS) [see remark \ref{WPS-Triangle}]. The horizontal map above is the projection that we obtain through the definition of the weighted homogeneous space $\frac{G}{P} \left [\psi_H \right ]$, i.e., by \eqref{eq:WHS}. The map on the left edge of the triangle is the cone construction of the classical algebraic geometry, and it exists because the homogeneous space $G/P$ is by definition a projective variety. Finally, the edge map at the right is the map that its existence is proved in the theorem \ref{th:theorem32}.

Certain ingredients imply two WPS of the same type but with probably different weight systems are isomorphic, [\cite{RT}, see Theorem \ref{th:theoremWPS} below]. For instance, we may permute the weights on the torus coordinates. Another ingredient is when we multiply all the weights by an arbitrary positive integer. In this case, the resulting weight system does not produce a new variety. In other words, the resulting variety is algebraically isomorphic to the original WPS. We can always reduce the weights in this manner to obtain reduced weight systems, [see Section \ref{sec:WeightedHomogSp} for exact definition]. Then, we can ask a refined question when two reduced weight systems produce two isomorphic WPS's. One way to answer this question is by using the tools in toric geometry since a WPS is a toric variety. One way to compare two WPS is to compare their toric fan. It has been proved in \cite{RT} that two reduced WPS with two weight systems are isomorphic if and only if the two fans can be transformed to one another by a matrix in $GL_n\mathbb{Z}$. We generalize this argument to weighted homogeneous manifolds. To a WHS $\frac{G}{P} \left [\psi_H \right ]$ we associate a combinatorial data $\Sigma_H$. The association of a $\Sigma_H$ to a WHS is a formal way to record the weights in a convex polytope or coordinate axes of a lattice. This lattice is a formal enlargement of the toric fan of the maximal torus of $G$. For this it is no longer a toric fan but it reserves the data of the torus action on all coordinates. We prove an analogous result that two WHS with reduced weight systems are isomorphic if and only if there exists a matrix in $GL_N(\mathbb{Z})$ that transforms the formal fan of one into the formal fan of the other. The only difference with WPS is that we can not deduce this from toric geometry. In order to study the isomorphism question for WHS's we consider relative maps $f:\frac{G}{P}[\psi_{H_1}] \to \frac{G}{P}[\psi_{H_2}]$. The map exists only if there is a homomorphism $\beta_f:\Sigma_{H_1} \longmapsto \Sigma_{H_2}$. One of the main results of this paper is a combinatorial criteria that two weighted homogeneous spaces possibly with different weight systems are isomorphic. 

In Section \ref{sec:InvKahler} we study invariant k\"ahler forms on a WHS. The concept is central in the theory of homogeneous spaces. The K\"ahler forms appear as certain Chern classes of line bundles on $G/P$. when $K$ is a maximal compact subgroup of $G$, the $K$-invariant K\"ahler forms represent the curvature of $K$-invariant line bundles. The concept is introduced in \cite{AKQ}. We want to introduce this notion on a weighted homogeneous space; for that, we first classify the $K$-invariant K\"ahler forms on $\frac{G}{P} \left [\psi_H \right ]$. Our result is a generalization of that, which is already presented in \cite{AKQ}.

Notice that the K\"ahler differentials on WHS $\frac{G}{P} \left [\psi_H \right ]$ can be defined by pluri-subharmonic potentials 
\begin{equation} \label{eq:Potential}
\begin{aligned}
\phi: \ &G \to \mathbb{R} \cup - \infty\\
\phi(g)&=\sum_{J(S)} c_{\alpha}\log \| \rho_{\alpha}(g).v_{\alpha} \|. 
\end{aligned}  
\end{equation}
The representation $\rho_{\alpha}$ is the irreducible representation of $G$ of highest weight $w_{\alpha}$,  the fundamental weights of $G$, and $v_{\alpha}$ is the highest weight vector. The K\"ahler forms on $\frac{G}{P} \left [\psi_H \right ]$ are pulled back to the K\"ahler forms on $G/P$, via the locally quotient map $\pi_H$. We shall explain in Section \ref{sec:InvKahler} that how the equation \eqref{eq:Potential} may be compared to the non-weighted case [see Theorem \ref{Thm:InvariantForm}]. Thus, as partial contributions, we generalize some results in \cite{AKQ}.

Finally, we discuss the coordinate ring of homogeneous varieties as a cluster algebra and its mutations. There exists embedding of a WHS into a product of weighted projective spaces on highest weight representations, where we obtain a multi-graded algebra structure on the coordinate ring of $\mathbb{C}\left [\frac{G}{P} \left [\psi_H \right ]\right ]$. The coordinate ring of a Homogeneous space is a finite extension of the weighted one. The degree of the extension is equal to the product of the weights at all the roots involved in the quotient $G/P$. It is known that the coordinate ring of $G/P$ is a cluster algebra, \cite{GLS}, \cite{FZ1}, \cite{FZ5}. These cluster algebras are of finite type, i.e., the adjacency matrix is defined via a Cartan matrix. An example of it is the Grassmann cluster algebras, i.e., the coordinate ring of the Grassmannian $Gr(k,n)$, where cluster complex can be obtained as a dual to a $(k,n)$-diagram on the disc. One approach to cluster algebras is through quivers and their mutations. 

Following a definition of weighted quivers in \cite{OT} we explain that the coordinate ring of a WHS is a weighted cluster algebra defined by a weighted quiver of finite type. This appears as the last contribution of the text. 
\subsection{Organization of the text}
The remainder of this paper is organized as follows. Section \ref{sec:Introduction} presents the introduction and explanations of an overall view of the paper. We also introduce some of the notations in the text. Section \ref{sec:HomogSp} is classical and reviews the well-known facts on homogeneous spaces. The references are \cite{M, Sp, Stei, Ste2, Ful1, Bri, Bor, Dy, K, Kn, B, Ho, Sa} as well as various other classical texts in Lie algebras and their representations. In Section \ref{sec:WeightedHomogSp} we introduce the definition of the weighted homogeneous space, and also it contains the main contributions of the text. In section \ref{sec:InvKahler} we study the invariant K\"ahler forms on a WHS. The results are generalizations of analogous theorems in \cite{AKQ}. For the basics, the reader can consult with \cite{Br, CM, Ok, Al, C, AKQ, KM, W}. Section \ref{sec:Prel-ClusterAlg} explains the basics of cluster algebras. The materials of this section are well known. The standard references are \cite{BGH, CFZ, FZ1, FZ2, FZ3, FZ4, Le, Lu1, Lu2, MRZ, MS, Ze, BFZ, FR, FZ5, FZ6}. In section \ref{sec:CoordRing-WHS}, we discuss weighted quivers and the coordinate ring of weighted homogeneous spaces. Finally, in section \ref{sec:conc} some conclusions and final remarks are given. We highlight that the main contributions of the text are given in Sections  \ref{sec:WeightedHomogSp}, \ref{sec:InvKahler} and \ref{sec:CoordRing-WHS}.
\section{Homogeneous Spaces}\label{sec:HomogSp}
The material of this section is well known, the references are \cite{Ful1, Bor, Stei, Sp, B}. Assume $\mathfrak{g}= Lie(G)$ is a semisimple Lie algebra, $\mathfrak{h}=\langle H_{\alpha_1},...,H_{\alpha_n} \rangle$ be a Cartan and $R=R_+ \bigcup R_-$ the associated root system with $\Pi=\{ \alpha_1,...,\alpha_n \}$ the simple roots. Let
\begin{equation} \label{eq:root-decom}
\mathfrak{g}=\mathfrak{h} \oplus \ \bigoplus_{\alpha \in R}\mathfrak{g}_{\alpha}
\end{equation}
be the root decomposition, and let  
\begin{equation}
\mathfrak{b}=\mathfrak{h} \oplus \bigoplus_{\alpha \in R_+}\mathfrak{g}_{\alpha}
\end{equation}
the maximal solvable (Borel) subalgebra with corresponding Lie group $B$. Then $B$ is a closed subgroup, and $G/B$ is a compact topological space. In fact, $G/B$ can be embedded in a Grassmannian, where $B$ is the stabilizer of $\wedge_{\alpha \in R_+} \alpha$ for the action 
\begin{equation}
G \times \mathbb{P}(\Lambda^{|R_+|}\mathfrak{g}) \to \mathbb{P}(\Lambda^{|R_+|}\mathfrak{g}) .
\end{equation}

A closed connected subgroup of $G$ is parabolic if $G/P$ can be realized as the orbit of the action of $G$ on $\mathbb{P}(V)$ for some representation $V$ of $G$. Thus $G/P$ is a projective variety. An example of a Parabolic subgroup is a Borel subgroup $B$, and a parabolic subgroup contains some Borel. In other words, Borels are minimal parabolic subgroups. For the corresponding Lie algebras we have $\mathfrak{b} \subset \mathfrak{p} \subset \mathfrak{g}$. Therefore 
\begin{equation}
\mathfrak{p}=\mathfrak{h} \oplus \bigoplus_{\alpha \in \Delta}\mathfrak{g}_{\alpha}, \qquad \Delta \supset R_+
\end{equation}
and for $\mathfrak{p}$ to be a subalgebra, $\Delta$ must be closed under addition. It follows that the parabolic subgroups of the simple group $G$ are in 1-1 correspondence with subsets of nodes of the Dynkin diagram, i.e., simple roots. 

Assume $V_{\lambda}$ is the irreducible representation of $G$ with the highest weight $\lambda$. Consider the action of $G$ on $\mathbb{P}(V_{\lambda})$, and let $p \in \mathbb{P}(V_{\lambda})$ be the point corresponded as the subspace of weight $\lambda$. One can show that $G.p$ is the unique closed orbit of the action of $G$ and $P_{\lambda}$ is the parabolic corresponded to the subset of simple roots perpendicular to $\lambda$, i.e., the faces of Weyl chambers. Therefore there is a 1-1 correspondence $\{ \text{parabolic subgroups of}\ G\} \ \stackrel{1-1}{\longleftrightarrow} \  \{\text{faces of the Weyl chambers}\}$ where the face corresponding to $P_{\lambda}$ is that containing $\lambda$. The weights in interior of Weyl chamber correspond to $B$. The Weyl group can be written as $W=N_G(T)/T$ where $N(T)$ is the normalizer of $T$ in $G$. For $w \in W$ one can choose a representative $\dot{w}$. Besides, one can show that the double coset $B\dot{w}B$ is independent of the lift. The Bruhat decomposition of $G$ is the double coset decomposition corresponding to $B\setminus G/B$. The latter introduces the following important theorem.
\begin{theorem} (Bruhat decomposition) \cite{Ful1}  \label{th:BruhatdecompG}
We have a disjoint union that follows:
\begin{equation}
G=\coprod_{w \in W} B\dot{w}B   .
\end{equation}
\end{theorem}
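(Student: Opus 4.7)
The plan is to establish the covering $G = \bigcup_{w \in W} B\dot w B$ and the disjointness separately, using the structure of root subgroups together with the geometry of $G/B$. First I would note that $B\dot w B$ depends only on $w$ and not on the chosen lift, since any two representatives of $w \in W = N_G(T)/T$ differ by an element of $T \subset B$.

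For the covering, set $Y = \bigcup_{w \in W} B\dot w B$. Plainly $B \subseteq Y$ and $Y$ is stable under left and right multiplication by $B$. Since $G$ is generated (as an abstract group) by $B$ together with lifts of the simple reflections, which is a standard consequence of the root subgroup decomposition of semisimple groups, it suffices to show that $Y$ is stable under right multiplication by each $\dot s_\alpha$ with $\alpha \in \Pi$. The crux is the inclusion
$$(B\dot s_\alpha B)(B \dot w B) \;\subseteq\; B \dot{s_\alpha w} B \;\cup\; B \dot w B,$$
which reduces, via the decomposition $B = T \ltimes \prod_{\beta \in R_+} U_\beta$ and the Chevalley commutation relations between root subgroups, to a direct calculation inside the rank-one subgroup $\langle U_\alpha, U_{-\alpha}, T \rangle$, which is a quotient of $SL_2(\mathbb{C})$. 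An induction on the length $\ell(w)$ then forces $Y = G$.

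For disjointness, I would pass to the flag variety $G/B$. Each coset $\dot w B$ is a $T$-fixed point because $\dot w^{-1} T \dot w = T \subset B$, and distinct $w \in W$ yield distinct $T$-fixed points because $B \cap N_G(T) = T$. If $B \dot w_1 B = B \dot w_2 B$, then $\dot w_1 B$ and $\dot w_2 B$ lie in the same $B$-orbit on $G/B$. Parametrizing this orbit via the standard root-subgroup decomposition as $U_{w_1} \cdot \dot w_1 B$ with $U_{w_1} = U \cap \dot w_1 U^- \dot w_1^{-1}$, the residual $T$-action becomes the conjugation action on $U_{w_1}$, whose only fixed point is the identity (each factor $U_\beta \subset U_{w_1}$ being $T$-equivariant with non-trivial weight $\beta$). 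Hence $\dot w_1 B = \dot w_2 B$ in $G/B$, which gives $w_1 = w_2$.

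The main obstacle is the rank-one product lemma: one must verify the key inclusion by computing inside $SL_2(\mathbb{C})$ with the pair generated by $U_\alpha, U_{-\alpha}, T$, and then tracking the commutation relations needed to transport this to the ambient $G$. Once this lemma is established, the induction for the covering and the $T$-fixed point argument for disjointness are essentially formal.
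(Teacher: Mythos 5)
The paper does not actually prove this theorem: it appears in Section~2, which the author explicitly labels as classical background, with the proof deferred to the standard references (Springer, Steinberg, Bourbaki, Borel). So there is no in-paper argument to compare against; your proposal must be judged on its own. What you outline is the standard Tits-system (BN-pair) proof: well-definedness of $B\dot wB$ because lifts differ by elements of $T\subset B$; the covering via the axiom $(B\dot s_\alpha B)(B\dot wB)\subseteq B\dot{s_\alpha w}B\cup B\dot wB$, reduced to a rank-one computation in $\langle U_\alpha,U_{-\alpha},T\rangle$; and disjointness via $T$-fixed points on $G/B$, using that the only $T$-fixed point of the orbit $B\dot wB/B\cong U_w$ is $\dot wB$ and that $B\cap N_G(T)=T$. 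This architecture is sound and is exactly how the cited references proceed. Two caveats: first, the rank-one product lemma is the entire computational content of the covering half, and you state it rather than verify it, so as written the proposal is an outline rather than a proof; second, your disjointness argument silently uses that $U_w\times B\to B\dot wB$, $(u,b)\mapsto u\dot wb$, is a bijection (otherwise a $T$-fixed point of the orbit need not pull back to a $T$-fixed element of $U_w$) --- this is precisely the uniqueness statement the paper records separately just before the proposition on $\dim U_w$, so you should either cite it or prove it as part of the argument. Also note a small mismatch: you announce stability of $Y=\bigcup_w B\dot wB$ under \emph{right} multiplication by $\dot s_\alpha$ but then quote the \emph{left}-multiplication inclusion; either suffices (using $(B\dot wB)^{-1}=B\dot w^{-1}B$), but the two should be made consistent.
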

The decomposition in Theorem \ref{th:BruhatdecompG} above is the double coset decomposition of the pair $(B,B)$ in $G$. The components $B\dot{w}B$ are called Bruhat cells. The Weyl group $W$ acts on the Bruhat cells. We give a brief analysis of this action in the following. Let $U$ (resp. $U^-$) be the unipotent radical of $B$ (resp. opposite Borel $B^-$). Define 
\begin{equation} 
U_w=U \cap \dot{w}U^- \dot{w}^{-1}, \qquad U_w'=U \cap \dot{w}U\dot{w}^{-1}
\end{equation} 
Again it is independent of the lift. The Lie algebra of $U_w$ is the sum of all root spaces $\mathfrak{g}_{\alpha}$ such that $\alpha$ is positive but $w^{-1}\alpha$ is negative. For $U_w'$ it is the sum of those where $\alpha$ and $w^{-1}\alpha$ are both positive. Thus $U_wU_w'T=B$. Replacing this identity, one gets 
\begin{equation} 
B\dot{w}B=U\dot{w}B.
\end{equation}
It is not hard to check that the presentation of an element in $B\dot{w}B$ in the form of the above product $u .\dot{w}. b$ is unique. It is also well known that the Bruhat cells have an affine structure. The following proposition provides its dimension.
\begin{prop} \label{th:Bruhatcell} \cite{Ful1}
The dimension of $U_w$ is the cardinality $R_+ \cap w(R_-)$. In fact, $U_w =\mathbb{C}^{l(w)}$ is affine, where $l(w)$ is the length of $w \in W$.
\end{prop}
A consequence of Theorem \ref{th:BruhatdecompG} and Proposition \ref{th:Bruhatcell} is the following.
\begin{theorem} (Bruhat decomposition of $G/B$) \cite{Ful1} \label{th:BruhatdecompB} 
We have a disjoint union of cosets 
\begin{equation} 
G/B=\coprod_w B\dot{w}B/B, \qquad X_w= B\dot{w}B/B
\end{equation}
where $X_w \cong \mathbb{C}^{l(w)}$ are Bruhat cells, and $l(w)=\sharp \{ \ \alpha \in R_+ \ |\ w\alpha <0\ \}$. 
\end{theorem}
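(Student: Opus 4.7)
The plan is to descend the Bruhat decomposition of $G$ stated in the previous theorem via the projection $\pi\colon G \to G/B$. Since each double coset $B\dot{w}B$ is stable under right multiplication by $B$, its image $X_w = B\dot{w}B/B$ is a well-defined subset of $G/B$, and $G/B = \bigcup_w X_w$ follows from surjectivity of $\pi$. For disjointness, suppose $gB \in X_{w_1} \cap X_{w_2}$; then $g$ lies in both $B\dot{w}_1 B$ and $B\dot{w}_2 B$, so by the disjointness of the double cosets already established in $G$ we must have $w_1 = w_2$.

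The main content is the identification of each $X_w$ with the affine space $U_w \cong \mathbb{C}^{l(w)}$. I would start from the identity $B\dot{w}B = U\dot{w}B$ recorded in the text, together with the factorization $U = U_w U_w'$ coming from the Lie-algebra description above. The key observation is that $U_w' \subset \dot{w}B\dot{w}^{-1}$: by construction, the Lie algebra of $U_w'$ is spanned by $\mathfrak{g}_\alpha$ with both $\alpha$ and $w^{-1}\alpha$ positive, so $\dot{w}^{-1}U_w'\dot{w} \subset U \subset B$. Hence $U_w' \dot{w} B = \dot{w} B$ and consequently
\begin{equation}
X_w = U\dot{w}B/B = U_w U_w'\dot{w}B/B = U_w \dot{w}B/B.
\end{equation}
I would then verify that the orbit map $\mu_w\colon U_w \to X_w$, $u \mapsto u\dot{w}B$, is bijective. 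Surjectivity is immediate. For injectivity, if $u\dot{w}B = u'\dot{w}B$ with $u,u' \in U_w$, then $u\cdot \dot{w}\cdot 1 = u'\cdot \dot{w}\cdot b$ for some $b \in B$; the uniqueness of the factorization $u\cdot\dot{w}\cdot b$ of an element of $B\dot{w}B$, noted right after $B\dot{w}B = U\dot{w}B$, forces $u = u'$.

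Finally, by the preceding Proposition, $U_w \cong \mathbb{C}^{|R_+ \cap w(R_-)|}$, and the map $\alpha \mapsto -w\alpha$ with inverse $\beta \mapsto -w^{-1}\beta$ is a bijection between $\{\alpha \in R_+ : w\alpha \in R_-\}$ and $R_+ \cap w(R_-)$, so the two cardinalities agree and $X_w \cong \mathbb{C}^{l(w)}$ with $l(w)$ as stated. I do not foresee a serious obstacle: all the substantive ingredients (Bruhat decomposition of $G$, the $U_w$-dimension formula, unique factorization in $B\dot{w}B$) are already in place, and what remains is the assembly. The one point that requires care is the stabilization claim $U_w'\dot{w}B = \dot{w}B$, which amounts to carefully tracking the positivity conditions on the roots under conjugation by $\dot{w}$.
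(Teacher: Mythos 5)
Your proposal is correct and is the standard textbook argument; the paper itself offers no proof of this statement, presenting it as classical background (Section 2 is expository, with references to Springer, Steinberg, Borel, etc.), so there is nothing to diverge from. All the steps check out: the descent of the double-coset decomposition, the reduction $U\dot{w}B = U_w U_w'\dot{w}B = U_w\dot{w}B$ via $\dot{w}^{-1}U_w'\dot{w}\subset U$, and the bijection $\alpha\mapsto -w\alpha$ matching $\{\alpha\in R_+ : w\alpha<0\}$ with $R_+\cap w(R_-)$. The only caution is that the paper's uniqueness claim for the factorization $u\cdot\dot{w}\cdot b$ must be read with $u$ ranging over $U_w$ (not all of $U$, where it would fail whenever $U_w'$ is nontrivial) --- which is exactly how you use it, so your injectivity argument stands.
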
 
Bruhat decomposition provides a 1-1 correspondence between 
\begin{equation}
B\setminus G/B \ \ \longleftrightarrow \ \ W=N_G(H)/H
\end{equation}
which is the origin to define different kinds of orders on the Bruhat cells. One can also express the Bruhat decomposition for $P\setminus G/B$. One has 
\begin{equation}
P=\coprod_{w \in W_L}B\dot{w}B    
\end{equation}
where $L$ is the Levi component in Levi decomposition $P=LN$ ($N$ is unipotent), and $W_L=N_L(T)/T$. Now we can state the Bruhat decomposition more generally for $G/P$ where $P$ is a parabolic subgroup.
\begin{theorem} (Bruhat decomposition of $G/P$) \cite{Ful1} \label{th:Bruhat-decom}
We have the decomposition 
\begin{equation}
\ P\dot{w}B=\coprod_{x \in W_L}BxwB.
\end{equation} 
It follows that; 
\begin{equation}\label{eq:gpok}
G/P=\coprod_{w \in W/W_L} P \dot{w} B.
\end{equation}
\end{theorem}
Notice that the Bruhat decomposition in Theorem \ref{th:Bruhat-decom} above provides a 1-1 correspondence as
\begin{equation}
P\setminus G/B  \ \longleftrightarrow \ W(P)=W/W_L=N_L(T)/T.
\end{equation}
There is a cross-action of the Weyl group on $P\setminus G/B$ that is generated by the following action of simple reflections on the Bruhat cells
\begin{equation}
s_{\alpha} \times PwB \longmapsto Pws_{\alpha}^{-1}B , \qquad \alpha \in \Pi   .
\end{equation}
We give a basic description of Bruhat decomposition for $SL_n \mathbb{C}$ and $Sp_{2n} \mathbb{C}$ in the following example.
\begin{example} \label{ex:Bruhat-decom}
Consider $G=SL_m\mathbb{C}$, with standard Borel $B$ of upper triangular matrices. We have $W=S_m$ the symmetric group on $m$ elements, $U_w$ are matrices with $1$ on the diagonal and $0$ in the $(i,j)$ coordinates where $i >j,\ w^{-1}(i) <w^{-1}(j)$ and its dimension equals 
\begin{equation} 
l(w)=\sharp\{\ (i,j)\ |\ i >j,\ w(i) <w(j)\ \}.
\end{equation} 
The Bruhat cell $X_w$ of $SL_m\mathbb{C}/B$ consists of the flags such that the dimension of the intersections with standard flag are governed by $w \in S_m$. 

The Bruhat decomposition of $Sp_{2n}\mathbb{C}$ is obtained from that of $SL_{2n}\mathbb{C}$ provided a change of the order on basis $e_1,...,e_n,e_{2n},...,e_{n+1}$. Then $B$ consists of matrices with upper left block upper triangular and lower right block is lower triangular, with $0$ other places. Its Weyl group is 
\begin{equation} 
W=\{ \ w \in S_{2n} \ |\ w(n+i)=w(i)+n\ \}.
\end{equation} 
The Bruhat decomposition of $SO_n \mathbb{C} $ has a similar explanation. 
\end{example} 
Finally, we explain the Bruhat decomposition of the double coset $K \setminus G/B$ where $K$ is a maximal compact subgroup, and $B$ is a Borel. In this case, we need to consider a real form of the Lie group $G$. To be more precise, let $\theta:G \to G$ be a Cartan involution of $G$, such that $K=G^{\theta}$ and $B=TU$ a $\theta$-stable Borel of $G$. Suppose 
\begin{equation}
\tilde{V}= \left \{ x \in G\ |\ \frac{\theta(x)}{x} \in N(T)\ \right \}.
\end{equation} 
There is an action of $K \times T$ on $\tilde{V}$ by,
\begin{equation} 
(K \times T) \times \tilde{V} \to \tilde{V}, \qquad (k,t)\times x \mapsto kxt^{-1}.
\end{equation}
Let $V$ be the orbit space of the action, and denote a representative of $v$ as $\dot{v}$, \cite{PSY}; thus, the following theorem presents the coset decomposition for $K \setminus G/B$. 
\begin{theorem} (T. A. Springer \cite{Sp}) \label{th:Springer}
There is a bijection 
\begin{equation}
V \stackrel{1-1}{\longleftrightarrow} K \setminus G/B    .
\end{equation}
\end{theorem}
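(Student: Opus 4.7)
The plan is to construct a map
$$\Psi \colon V \longrightarrow K \setminus G/B, \qquad [x] \longmapsto KxB,$$
and show it is a bijection. Well-definedness is formal: if $x' = k x t^{-1}$ with $k \in K$ and $t \in T \subset B$, then $K x' B = K (k x t^{-1}) B = K x B$, so the assignment factors through the orbit space $V = (K\times T) \setminus \tilde{V}$.

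The nontrivial direction is surjectivity: given any $gB \in G/B$, I would produce $b \in B$ so that $x = gb$ satisfies $\theta(x) x^{-1} \in N(T)$. Setting $n = \theta(g) g^{-1}$, this amounts to solving the twisted equation
$$n \cdot g \,\theta(b) b^{-1}\, g^{-1} \in N(T)$$
for $b \in B$. Using that $\theta$ stabilizes the Levi decomposition $B = TU$, I would split $b = tu$ and handle the factors separately. On $U$ one applies a Lang--Steinberg style fixed-point argument to the morphism $u \mapsto \theta(u) u^{-1}$: since $\theta$ is an involution and $U$ is a unipotent algebraic group, this map is sufficiently surjective to absorb the unipotent component of $g^{-1} n g$. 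The toral factor is then fixed by choosing a representative in $N(T)/T = W$ compatible with the twisted-involution class of $n$ modulo $T$.

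For injectivity, suppose $x_1, x_2 \in \tilde{V}$ with $x_2 = k x_1 b$ for some $k \in K$ and $b \in B$. Using $\theta(k) = k$ one computes
$$\theta(x_2) x_2^{-1} \;=\; k\, \theta(x_1)\, \theta(b) b^{-1} \, x_1^{-1} k^{-1}.$$
Comparing this with $\theta(x_1) x_1^{-1} \in N(T)$ and exploiting the $\theta$-stability of the Levi decomposition $B = TU$ together with $U \cap K = \{1\}$, one argues that the unipotent component of $b$ must be $\theta$-fixed and hence trivial, so $b \in T$. Thus $(k, b) \in K \times T$ witnesses $[x_1] = [x_2]$ in $V$.

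The main obstacle is surjectivity: producing a $\tilde{V}$-representative in every $K$-orbit on $G/B$ is the genuine content of Springer's theorem, relying on the $\theta$-equivariant geometry of $B$ and a Lang--Steinberg argument for the morphism $b \mapsto \theta(b) b^{-1}$. Injectivity, though requiring careful tracking of conjugations, reduces to structural facts about the Levi decomposition of the $\theta$-stable Borel once both $x_1$ and $x_2$ are known to lie in $\tilde{V}$, and well-definedness is immediate from $T \subset B$.
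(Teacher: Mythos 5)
The paper offers no proof of this statement --- it is quoted from Springer \cite{Sp} as classical background --- so the only thing to assess is whether your argument would stand on its own. The skeleton (the map $[x]\mapsto KxB$ and well-definedness from $T\subset B$) is right, but both substantive halves have genuine gaps. A preliminary remark on conventions: for $\tilde V$ to be stable under $x\mapsto kxt^{-1}$ the defining condition must be read as $x^{-1}\theta(x)\in N(T)$, equivalently that the maximal torus $xTx^{-1}$ is $\theta$-stable; with the reading $\theta(x)x^{-1}$ that you adopt, $\tilde V$ is not even preserved by the $(K\times T)$-action. This reformulation --- membership in $\tilde V$ means ``$xTx^{-1}$ is a $\theta$-stable maximal torus'' --- is what makes the correct proof go.

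For surjectivity, the appeal to a ``Lang--Steinberg style'' argument for $u\mapsto\theta(u)u^{-1}$ does not work: Lang--Steinberg requires a Frobenius (an isogeny with nilpotent differential), and for a semisimple involution the image of this map is a proper subvariety of $U$ (it is trivial if $\theta$ acts trivially on $U$), so ``sufficiently surjective to absorb the unipotent component'' is precisely the unproved point. The standard argument sidesteps the twisted equation entirely: given $g$, the group $\bigl(gBg^{-1}\cap\theta(gBg^{-1})\bigr)^{\circ}$ is a $\theta$-stable connected solvable group containing a maximal torus of $G$, so by Steinberg's theorem on semisimple automorphisms of connected solvable groups it contains a $\theta$-stable maximal torus $T_1\subset gBg^{-1}$; writing $T_1=(gb)T(gb)^{-1}$ for some $b\in B$ gives $x=gb\in\tilde V$ with $KxB=KgB$. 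For injectivity you try to prove far too much: from $x_2=kx_1b$ it is false that $b$ must lie in $T$, and the auxiliary claim $U\cap K=\{1\}$ fails in general (for the inner involution of $SL_3$ by $\mathrm{diag}(1,1,-1)$ one has $K=S(GL_2\times GL_1)$, which meets $U$ in a one-parameter unipotent subgroup; any non-toral $h\in x_1Bx_1^{-1}\cap K$ yields $x_1=(h^{-1})\,x_1\,(x_1^{-1}hx_1)$ with $b=x_1^{-1}hx_1\notin T$). What must be shown is only that $x_2\in Kx_1T$, and the correct route is again via tori: $x_1Tx_1^{-1}$ and $k^{-1}x_2Tx_2^{-1}k$ are two $\theta$-stable maximal tori of the $\theta$-stable connected solvable group $\bigl(x_1Bx_1^{-1}\cap\theta(x_1Bx_1^{-1})\bigr)^{\circ}$, hence conjugate by a $\theta$-fixed element $h$ of that group; unwinding this gives $x_1^{-1}h^{-1}x_1b\in N(T)\cap B=T$ and therefore $x_2\in Kx_1T$. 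Without these two torus lemmas the proposal does not close.
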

The cross action of $W$ on $K\setminus G/B$ is given by
\begin{equation}
s_{\alpha} \times K\dot{v}B \mapsto K\dot{v}s_{\alpha}^{-1}B
\end{equation}

A similar formula can be stated for the Parabolic $P$, and thus, we have the following more general coset presentation for $K \setminus G/P$ given in the following theorem, [see \cite{Ye} for details].
\begin{theorem} \cite{PSY} \label{thm:K-G-P}
There is a bijection 
\begin{equation}
K \setminus G/P \stackrel{\cong}{\longrightarrow} \left ( L^{\theta}\setminus L/P \cap L \right ) \times_{W_L} W    
\end{equation}
where the fiber product is with respect to the cross action of $W_L$ on $L^{\theta}\setminus L/P \cap L$. 
\end{theorem}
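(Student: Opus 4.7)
The plan is to extend Springer's bijection $V \leftrightarrow K\setminus G/B$ to the parabolic setting by using the Levi decomposition $P = LN$ to split the problem into a Springer-type contribution from the Levi and a Weyl-combinatorial contribution coming from the Bruhat stratification of $G/P$. The starting point is the surjection $\pi\colon K\setminus G/B \twoheadrightarrow K\setminus G/P$ induced by the forgetful map $G/B \to G/P$, together with the description of the fibers as $(K\cap gPg^{-1})$-orbits on $gP/B$.

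First, I would use the Bruhat decomposition $G = \coprod_{w \in W/W_L} P\dot w B$ to put every $KgP$ into the normal form $Kp\dot wP$ with $p \in P$ and $w \in W/W_L$. The Springer invariant $v(x) := \theta(x)/x \bmod T \in W$ of the preceding theorem, evaluated on such a representative, produces a Weyl-group element; the map $Kp\dot w P \mapsto w$ records the $W$-coordinate of the alleged bijection and generalizes the $K\setminus G/B \to W$ piece of Springer.

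Second, I would identify what happens when $p$ is varied within $P$. Writing $p = \ell n$ with $\ell \in L$ and $n \in N$, the unipotent radical contributes nothing to $\theta(\cdot)/\cdot$ modulo $T$ because $N$ is connected and $\theta$-equivariantly absorbed into the ambient Bruhat cell, so the $n$-factor is killed. The residual $\ell$-factor assigns to each $Kp\dot wP$ the class of $\ell$ in $L^\theta \setminus L/(P \cap L)$, by the Springer bijection applied inside $L$ (which is reductive, $\theta$-stable, and carries the induced involution). This gives a well-defined map
\begin{equation}
\Phi\colon K\setminus G/P \longrightarrow \bigl(L^\theta\setminus L/(P\cap L)\bigr) \times W
\end{equation}
whose image is invariant under the diagonal $W_L$-action, yielding a map into the balanced product $(L^\theta \setminus L/(P\cap L)) \times_{W_L} W$. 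The cross-action of $W_L$ on the Levi factor is by construction the one described after Springer's theorem, while on $W$ it is the obvious restriction.

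The main obstacle will be proving that $\Phi$ is well-defined and bijective, i.e. that the only ambiguity linking $(\ell, w)$ and $(\ell', w')$ representing the same $KgP$ is exactly the $W_L$ cross-action, nothing more and nothing less. Concretely, one must verify (i) if $Kp\dot wP = Kp'\dot{w}'P$ then $w' = ws^{-1}$ and $[\ell'] = s \cdot [\ell]$ for some $s \in W_L$, and (ii) that every pair in the fiber product is realized. Part (i) requires the delicate comparison between the $\theta$-twisted multiplication on $N_G(T)/T$ and the inner Bruhat decomposition $P = \coprod_{s \in W_L} B \dot s B$ of the Levi, using $\theta$-stability of $B$ and $T$. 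Part (ii) follows by using the parabolic Bruhat cells $P\dot w B$ to lift any prescribed pair $(L^\theta \ell (P\cap L), w)$ back to an explicit representative $\ell \dot w \in G$, and checking that the $\Phi$-image of its $K\times P$-orbit recovers the chosen pair modulo $W_L$.
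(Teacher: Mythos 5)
The paper does not actually prove this statement: it is quoted verbatim from the reference [PSY] (Paul--Sahi--Yee) with no argument supplied, so there is no in-paper proof to measure your proposal against. Judged on its own, your outline has the right overall shape --- reduce $K\setminus G/P$ to the Levi via the parabolic Bruhat decomposition $G=\coprod_{w\in W/W_L}P\dot wB$ and Springer's parametrization of $K\setminus G/B$, with the $W_L$-ambiguity coming from $P=\coprod_{s\in W_L}B\dot sB$ --- but several steps that you treat as routine are in fact the substance of the theorem. First, the Springer invariant $v(x)=\theta(x)x^{-1}\bmod T$ is only defined on $\tilde V=\{x\in G\ |\ \theta(x)x^{-1}\in N(T)\}$, not on all of $G$; an arbitrary normal-form representative $p\dot w$ of a class in $K\setminus G/P$ has no reason to lie in $\tilde V$, so ``evaluating $v$ on such a representative'' is not licensed. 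You would first have to show that every $(K,P)$-double coset meets $\tilde V$ and that the resulting class is independent of the choice, which amounts to re-running Springer's argument in the parabolic setting rather than quoting it. Second, the claim that the unipotent radical is ``$\theta$-equivariantly absorbed'' and contributes nothing is an assertion, not an argument: $\theta(N)$ is the unipotent radical of $\theta(P)$, which is a different group unless $P$ (equivalently its set of simple roots) is $\theta$-stable, and the same stability is needed for $L^\theta$ to even make sense --- a hypothesis you invoke silently. Even granting $\theta$-stability, killing the $N$-factor requires an explicit twisted-conjugation computation showing each $K$-orbit in $P\dot wB$ meets $L\dot w$.

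Finally, your own items (i) and (ii) --- that the only ambiguity in $(\ell,w)$ is exactly the diagonal $W_L$ cross action, and that every pair in the balanced product is hit --- are not peripheral verifications; they \emph{are} the theorem. Deferring them as ``the main obstacle'' leaves the proposal as a credible strategy sketch rather than a proof. If you want to complete it, the cleanest route is the one in Yee's preprint cited as [Ye]/[PSY]: fiber the projection $K\setminus G/B\to K\setminus G/P$ and identify each fiber over $K\dot g P$ with $(K\cap \dot gP\dot g^{-1})$-orbits on $P/B\cong L/(B\cap L)$, then apply Springer's theorem to the reductive $\theta$-stable group $L$; the $W_L$-balancing then falls out of matching the cross actions on base and fiber, which is exactly the comparison your step (i) calls for.
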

We shall consider the Bruhat cells as the manifold charts for $G/P$ and also for $\frac{G}{P} \left [\psi_H \right ]$. First, it would be essential to understand how the coordinate changes are formulated by the action of the Weyl group on the Bruhat cells. The cross action on the Bruhat cells is the basic building block of these coordinate changes.
%
%
%******** s e c t i o n *****************
\section{Weighted Homogeneous Spaces}\label{sec:WeightedHomogSp}
This section defines a weighted homogeneous space (WHS) and then proves some of its fundamental geometric properties. We give two equivalent definitions that one appears in the Theorem \ref{th:theorem32}. The first definition is by the $\mathbb{C}^*$-action on the affine cone over $G/P$. The central point in this definition is that the weights on the coordinates of a specific chart are positive, so the resulting quotient space is compact. The second definition is to express a WHS as a whole compact quotient of $G/P$ by a finite Abelian group. Because our definition is new, all the material in this section has new contributions, except the Theorem \ref{th:theoremWPS}.

A weighted homogeneous space (WHS) should be understood as a homogeneous space $G/P$ where the coordinate variables have positive integer weights. If we consider a projective space instead, then a weighted projective space is a projective space where its coordinates have positive integer weights. However, the coordinate system of a homogeneous variety is more complicated than a projective space. To explain the coordinate system on homogeneous spaces, we first consider the case $G/B$ where $B$ is a Borel subgroup of $G$. For the sake of clearance, we consider the Bruhat cells as coordinate charts of $G/B$. The action of the maximal torus $T$ of $G$ on the local coordinates of the Bruhat cells can be easily explained through 1-parameter subgroups of $G$. The case of $G/P$ for arbitrary parabolic subgroup $P$ can be described similarly. 
\subsection{The definition of a WHS} 
Let $G$ be a complex semisimple reductive Lie group and $P$ a parabolic subgroup and $B$ a Borel subgroup of $G$. Set $\mathfrak{g}=Lie(G)$, a complex semisimple Lie algebra, $\mathfrak{h}$ a Cartan subalgebra, and $R=R_+ \cup R_-$ the associated root system, and $\Pi$ the simple roots. Set 
\begin{equation} 
\mathfrak{h}_{\mathbb{R}}=\{\ H \in \mathfrak{h}\ |\ \alpha(H) \in \mathbb{R}, \ \forall \alpha \in R\ \}
\end{equation}
and $C_+$ the highest Weyl chamber, i.e., 
\begin{equation} 
C_+=\{H \in \mathfrak{h}|\alpha(H) \geq 0, \forall \alpha \in R_+\}.
\end{equation} 
For any $H\in \mathfrak{h}$ define $\psi_H:\Pi \to \mathbb{R}, \ \alpha \mapsto \alpha(H)$. The weight system is given by the integral dominant weight function
\begin{equation} 
\psi_H:\Pi \to \mathbb{Z}_{\geq 0},\qquad H \in C_+ .
\end{equation} 
Then $\psi_H$ assigns a positive weight to each simple root i.e., the nodes of the Dynkin diagram. 

Let $ \ \{ X_{\alpha}, H_{\alpha}, X_{-\alpha} \}$ be an $\mathfrak{sl}_2$-triple in $\mathfrak{g}$. The root groups are defined via the maps 
\begin{equation} \label{eq:root-group}
\phi_{\alpha}:SL_2\mathbb{C} \to L_{\alpha}, \qquad Lie(L_{\alpha})=\langle X_{\alpha}, H_{\alpha}, X_{-\alpha} \rangle .
\end{equation} 
On $G$ we shall consider the following:
\begin{eqnarray}
z_{\alpha}(z)=\phi_{\alpha}\begin{pmatrix}
1 & z\\
0 & 1
\end{pmatrix}, \ z_{-\alpha}(z)=\phi_{\alpha}\begin{pmatrix}
1 & 0\\ \nonumber
z & 1
\end{pmatrix}, \\
 \check{\alpha}(z)=\phi_{\alpha}\begin{pmatrix}
z & 0\\
0 & z^{-1}
\end{pmatrix} .
\end{eqnarray}

Set $U$ to be the unipotent radical of the Borel $B$. Let $T$ be the maximal torus of $G$. There is a $T$-equivariant isomorphism $U \cong \mathbb{C}^{|R_+|}$, where $T$ acts on $\mathbb{C}^{|R_+|}$ by 
\begin{equation} 
t.\ x_{\alpha}=\alpha(t)\ x_{\alpha}, \qquad \alpha \in R_+ .
\end{equation} 
Thus a $1$-parameter subgroups gives rise to an action on $U$ defined by,
\begin{equation}\label{eq:torus-action}
t._H\ x_{\alpha}=t^{\ \langle \alpha,H\rangle}\ x_{\alpha}, \qquad \alpha \in R  . 
\end{equation}
The Weyl group $W=N_G(T)/T$ acts on $T$ by conjugation and the action of $W$ on the weight lattice $\mathcal{P}$ is given by 
\begin{equation} \label{eq:weylaction}
t^{w(\lambda)} =(w^{-1}tw)^{\lambda}  , \qquad w \in W, \ \lambda \in \mathcal{P}, \ t \in T  
\end{equation}
where $\lambda$ is regarded as a character of the torus $T$ by
\begin{equation} 
\lambda(\exp(t))=\exp(t)^{\lambda}=e^{\lambda(t)}. 
\end{equation}
The Weyl group also acts on the cells $U$, so that it permutes the Bruhat cells (cross action). Under this change of coordinates, the $T$-action gets twisted as 
\begin{equation} \label{eq:Weyl-action}
t._H\ x_{\alpha}^w=\ t^{w(\alpha)} . \ x_{\alpha}^w, \qquad \alpha \in R_+, \ x_{\alpha}^w \in w.U
\end{equation}
where $x_{\alpha}^w$ states the coordinate $x_{\alpha}$ in $w U$. Remember that $H$ was chosen so that $\langle -,H \rangle$ acting as an integral dominant weight, thus we get all the above weights positive. The identity \eqref{eq:Weyl-action} is crucial in our approach to define the weighted homogeneous space. In fact, this relation guarantees that the weights of the action on different Bruhat cells are positive, so we get a compact space at the end. Without attending to this identity, one may lead to the former definition given by Abe et al. and Corti-Reid, \cite{AbM1, AbM2, CR, QAN} to add a big positive integer $u \gg 0$ to the weights to make them all positive. This process will disturb the data recorded in the weight system. However, one can obtain the embedding into the weighted projective space at the end. The equation \eqref{eq:Weyl-action} tells the way the weights are exchanged through the Weyl group action on the Bruhat cells. Because this action is transitive, we get a complete description of the weights on different manifold charts. More generally, the above analogy recommends the following. We define the weighted homogeneous space by
\begin{equation} \label{eq:weightedG/B}
\frac{G}{B} \big [\psi_H \big ]=\left [Cone \left (\dfrac{ G}{B} \right )\diagdown 0 \right ] \big /_HT.
\end{equation}
The meaning of the quotient in \eqref{eq:weightedG/B} has to be understood by \eqref{eq:torus-action} and this action is well-defined because of \eqref{eq:weylaction}. One can generalize this definition for a general parabolic, $P$. That introduces our first main definition.
\begin{definition} [Main Result] \label{def: WHS}
Assume $\psi_H \in Hom(\Pi,\mathbb{Z}_+)$ be a positive dominant integral weight corresponding to $H \in C_+$. Define
\begin{equation} 
\frac{G}{P} \big [\psi_H \big ]=\left [Cone \left (\dfrac{G}{P} \right )\diagdown 0 \right ] \big /_HT
\end{equation}
where the action of $T$ is given by \eqref{eq:torus-action} and \eqref{eq:Weyl-action}.
\end{definition}
Assume the parabolic subgroup $P$ corresponds to the set of simple roots $\Pi \setminus I=J$. Let $R(P)$ (resp. $R(P)_+$) be the roots (resp. the positive roots) with support in $J$ and let $U(P)$ be the Bruhat cell corresponding to the identity. Then, $U(P) \cong \mathbb{C}^{|R(P)_+|}$. The $T$-action on the coordinates $x_{\alpha}$ are given by the same identity as in \eqref{eq:torus-action}. The actions of the Weyl group behave as a change of coordinates. The first question that we answer is the possibility of expressing the weighted homogeneous space as a compact quotient. It appears as our first result next.
\begin{theorem} [Main Result]\label{th:theorem32}
We have 
\begin{equation}\label{eq:gPpsih:th}
\frac{G}{P} \left [\psi_H \right ]=\left [ \times_{\alpha \in R(P)_+} \mathbb{Z}_{\langle \alpha,H \rangle} \right ] \setminus G/P  .  
\end{equation}
\end{theorem}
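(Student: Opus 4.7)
The plan is to reduce the identity to the cell-by-cell situation furnished by the Bruhat decomposition, where it becomes an instance of the classical fact that a weighted projective space is a finite quotient of an ordinary projective space. The commutative diagram \eqref{eq:diagram_first} of the introduction is essentially the statement being proved, so the task is to verify it carefully along each Bruhat chart and check compatibility under the Weyl cross-action.

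First I would restrict to the open Bruhat cell $U(P)\cong\mathbb{C}^{|R(P)|}$ associated with the identity coset, with coordinates $\{x_\alpha\}_{\alpha\in R(P)_+}$. On this cell the twisted torus action \eqref{eq:thxalpha} takes the explicit form $t._H x_\alpha=t^{\langle\alpha,H\rangle}x_\alpha$, so on the cone side the $T$-orbit of a point is labelled by the monomial characters $\prod t^{\langle\alpha,H\rangle}$. I would then factor this action along the diagonal subgroup $\mathbb{C}^*\subset T$ that corresponds to the $\mathbb{C}^*$-cone structure: quotienting by it produces the usual cell in $G/P$. The residual action that remains after this first quotient is the finite-order action of the kernel of the homomorphism $T\to\mathbb{C}^*$ defined by the weight system, which on the coordinate $x_\alpha$ is exactly a rotation by a $\langle\alpha,H\rangle$-th root of unity. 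This is the same mechanism that produces the isomorphism $\mathbb{P}(a_0,\dots,a_n)\cong\mathbb{P}^n/\prod\mu_{a_i}$ via the map $(z_0:\cdots:z_n)\mapsto(z_0^{a_0}:\cdots:z_n^{a_n})$; I would cite or reproduce the one-line argument there and transplant it to $U(P)$, obtaining the equality on each cell.

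Next, I would globalize by running the same argument on every Bruhat cell $X_w$, $w\in W/W_L$, using \eqref{eq:thxalpha_dos} in place of \eqref{eq:thxalpha}. The cross-action of $W$ only permutes the coordinates $x_\alpha^w$ and conjugates the torus factor, so the finite cyclic groups $\mathbb{Z}_{\langle\alpha,H\rangle}$ attached to $\alpha\in R(P)_+$ are intrinsic to the weight $\langle\alpha,H\rangle$ and independent of the chart. Consequently the local descriptions glue, and the piecewise identity
\begin{equation*}
X_w[\psi_H]=\bigl[\times_{\alpha\in R(P)_+}\mathbb{Z}_{\langle\alpha,H\rangle}\bigr]\setminus X_w
\end{equation*}
assembles into \eqref{eq:gPpsih:th} through the Bruhat decomposition of $G/P$ and its descent $\pi_H:X_w\to X_w[\psi_H]$.

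The main obstacle, I expect, is not the cell-level calculation — which is just the weighted projective space identity with extra indices — but making precise the statement that the two successive quotients of the diagram \eqref{eq:diagram_first} compose to the single quotient $/_H T$ in Definition \ref{def: WHS}. Concretely, one must check that the kernel of $T\to\mathbb{C}^*$ acts through the finite product $\prod_{\alpha\in R(P)_+}\mathbb{Z}_{\langle\alpha,H\rangle}$ faithfully on $G/P$, and that no further identifications appear from the $W$-equivariant gluing of the cells. Once this finite-group/torus-exact-sequence bookkeeping is written down, the theorem follows by composing the two quotient maps in \eqref{eq:diagram_first}.
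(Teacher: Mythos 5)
Your cell-level analysis coincides with the paper's: the proof there likewise begins by observing that over each Bruhat chart the right edge of the triangle \eqref{eq:diegramcone} is the finite quotient induced by $x_\alpha \mapsto x_\alpha^{\langle\alpha,H\rangle}$, i.e.\ the same power-map mechanism that gives $\mathbb{P}(a_0,\dots,a_n)\cong \mathbb{P}^n/\prod_i\mu_{a_i}$. Where you diverge is in how the global identity \eqref{eq:gPpsih:th} is extracted. You propose to glue the chart-by-chart identifications across the Bruhat decomposition using the $W$-cross-action. The paper instead globalizes algebraically: following Dolgachev's treatment of weighted projective spaces, it lets the finite group scheme $\mu=\times_{\alpha\in R(P)_+}\mu_{\langle\alpha,H\rangle}$ act on the homogeneous coordinate ring $\mathbb{C}[G/P]$, shows that the invariant subring is isomorphic to $\mathbb{C}\bigl[\frac{G}{P}[\psi_H]\bigr]$, and then applies $\mathrm{proj}$ to both sides. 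That route buys exactly what your proposal defers: it never has to verify that the local quotients are compatible on overlaps of charts or that no extra identifications arise from the Weyl-group gluing, since a single ring isomorphism settles the global statement at once.

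Two points in your write-up deserve attention. First, the step you flag as the ``main obstacle'' --- that the kernel of $T\to\mathbb{C}^*$ acts through $\prod_{\alpha}\mathbb{Z}_{\langle\alpha,H\rangle}$ and that the cell identifications glue --- is genuinely the crux, and your proposal leaves it as bookkeeping to be ``written down.'' Note also that the Bruhat cells $X_w$ for non-maximal $w$ are only locally closed, so ``gluing over the Bruhat decomposition'' must really mean gluing over the open cover by $W$-translates of the big cell; this is implicit in your use of the coordinates $x_\alpha^w$ but should be said. Second, your description of the finite group as ``the kernel of the homomorphism $T\to\mathbb{C}^*$ defined by the weight system'' is not quite right: the group $\prod_\alpha\mu_{\langle\alpha,H\rangle}$ is not a subgroup of the one-parameter subgroup generated by $H$ acting on the cone; it is the deck group of the coordinate-wise power map relating the standard projectivization to the weighted one. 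You invoke the correct mechanism in the next sentence, but the two descriptions should not be conflated. If you want to keep your geometric route, the cleanest way to close the gap is in fact to borrow the paper's device at the last step: pass to invariants of $\mathbb{C}[G/P]$ under $\mu$ and take $\mathrm{proj}$, which converts your chart-level observation into the global equality without any gluing argument.
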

\begin{proof} We highlight that the claim in \eqref{eq:gPpsih:th} is to express the WHS as the global quotient of the projective variety $G/P$ by the finite group $\left [ \times_{\alpha \in R(P)_+} \mathbb{Z}_{\langle \alpha,H \rangle} \right ]$. 
We first explain how the following diagram is well-defined and commutative:
\begin{equation}\label{eq:diegramcone}
\begin{tikzcd}
 & G/P \arrow{dr}{/  \times_{\alpha \in R(P)_+} \mathbb{Z}_{\langle \alpha,H \rangle }} \\
\left [Cone \left (\dfrac{ G}{P} \right )\diagdown 0 \right ] \arrow{ur}{/\mathbb{C}^*} \arrow{rr}{/[\psi_H]} && \dfrac{G}{P} \big [\psi_H \big ],
\end{tikzcd}
\end{equation}
where the left-hand edge map of the triangle is the ordinary cone over a projective variety, i.e., a $\mathbb{C}^*$-fibration; the horizontal map is the definition of the WHS, i.e., \ref{def: WHS}. The right-hand edge map is the map we wish to prove its existence by this theorem and is the quotient map by a finite Abelian group. Because the charts are compatible with the left edge cone construction, the commutativity means that the horizontal quotient map induces the right-hand edge map. We show the existence of the right edge quotient map, which will prove the theorem.

In order to show the isomorphism of the two projective varieties on both sides of \eqref{eq:gPpsih:th}, one can prove that their corresponding homogeneous coordinate rings are isomorphic, [the same method has been used in \cite{Do} for weighted projective spaces]. Now, we note that the argument used in \cite{Do} (please refer to Sec. 1.2.2 of that reference) extends over here. The action of the group scheme $\mu= [\times_{\alpha \in R(P)_+} \mu_{\langle \alpha,H \rangle}]$ on $G/P$ is induced by the action of $\mu$ on $\mathbb{C}\left [\frac{G}{P} \right ]$ and is defined by the $x_{\alpha} \mapsto x_{\alpha} \times \overline{Y_{\alpha}}$ where $\overline{Y_{\alpha}}=Y_{\alpha} \ \mod (Y^{\langle \alpha, H \rangle}-1)$. This demonstrates that the induced map defined by
\begin{equation}
\phi: \mathbb{C}\left [\frac{G}{P} \big [\psi_H \big ]\right ] \stackrel{\cong}{\longrightarrow} \mathbb{C}\left [\frac{G}{P} \right ]^{\times_{\alpha}\mu_{\alpha}}, \ (\mu_{\alpha}=\langle \alpha, H \rangle)
\end{equation} 
is an isomorphism, where the superfix means elements fixed by the action. Therefore, it follows that
\begin{align}
\frac{G}{P} \big [\psi_H \big ]&=proj \left ( \mathbb{C}\left [\frac{G}{P} \big [\psi_H \big ]\right ] \right ) \\ \nonumber
&=\left [ \times_{\alpha \in R(P)_+} \mathbb{Z}_{\langle \alpha,H \rangle} \right ] \setminus G/P    .
\end{align} 
\end{proof}
\begin{corollary}
There is a well-defined surjective continuous map 
\begin{equation} 
\pi_H:\frac{G}{P} \to \frac{G}{P} \big [\psi_H \big ]
\end{equation} 
that is a continuous locally finite quotient map with a finite number of cyclic singularities. 
\end{corollary}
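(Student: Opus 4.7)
The plan is to deduce the corollary directly from Theorem \ref{th:theorem32}. The key observation is that Theorem \ref{th:theorem32} identifies $\frac{G}{P}[\psi_H]$ with the global quotient of the projective variety $G/P$ by the finite abelian group $\Gamma := \times_{\alpha \in R(P)_+} \mathbb{Z}_{\langle \alpha,H\rangle}$. I therefore define $\pi_H$ to be the canonical quotient map associated with this action. Surjectivity of $\pi_H$ is immediate from the fact that $\frac{G}{P}[\psi_H]$ carries the quotient topology, and continuity follows by the universal property of the quotient. Local finiteness of the fibers is automatic since $\Gamma$ is a finite group: over any point the fiber is a $\Gamma$-orbit, hence of cardinality at most $|\Gamma| = \prod_{\alpha \in R(P)_+} \langle \alpha, H\rangle$.

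Next I would analyze the singularities chart by chart. On each Bruhat cell $U_w \cong \mathbb{C}^{|R(P)|}$, the action of $\Gamma$ is given coordinate-wise by $x_\alpha \mapsto \zeta_\alpha x_\alpha$ with $\zeta_\alpha \in \mu_{\langle \alpha,H\rangle}$, exactly as in equations \eqref{eq:thxalpha} and \eqref{eq:thxalpha_dos}, and the transitions between charts are the cross-action permutations of the Weyl group which respect this structure. The stabilizer of a point $(x_\alpha)$ is $\prod_{\{\alpha \,:\, x_\alpha = 0\}} \mu_{\langle \alpha, H\rangle}$, so the nontrivial stabilizers occur precisely along the union of the coordinate strata where some $x_\alpha$ vanishes and $\langle \alpha, H\rangle > 1$. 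Stratifying $G/P$ by these coordinate loci on each Bruhat cell, and taking images under $\pi_H$, one sees there are only finitely many orbit types, whence only finitely many singular strata. Along each such stratum the transverse slice is acted upon by a single $\mu_{\langle \alpha,H\rangle}$ (diagonally on the vanishing coordinates), producing the desired cyclic quotient singularity in the local analytic model.

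The main obstacle I anticipate is being careful about two points. First, one must verify that the definition of $\pi_H$ is independent of the Bruhat cell chosen, which is precisely the content of the commuting triangle \eqref{eq:diegramcone} established in the proof of Theorem \ref{th:theorem32} together with the compatibility of the twisted $T$-action under the Weyl cross-action \eqref{eq:thxalpha_dos}. Second, one must interpret the phrase \emph{cyclic singularities} correctly: although $\Gamma$ itself is only abelian, on each stratum the transverse singularity is modelled on the diagonal $\mu_d$-action with $d = \langle \alpha, H\rangle$, so the local model is cyclic in the standard sense of finite quotient singularities. Once these two points are handled, the remaining verifications are routine consequences of the quotient construction already in place.
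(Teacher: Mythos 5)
Your proposal takes essentially the same route as the paper: the paper's own proof simply declares $\pi_H$ to be the right edge of the commuting triangle \eqref{eq:diegramcone} and refers back to Theorem \ref{th:theorem32} for its global definition, exactly as you do. You in fact supply considerably more verification than the paper (stabilizer computation, chart compatibility under the Weyl cross-action, the local singularity model); the only imprecision is that at a point where several coordinates $x_\alpha$ vanish the stabilizer is a product of cyclic groups acting coordinate-wise rather than a single $\mu_d$ acting diagonally, but the paper itself glosses over this in calling the singularities ``cyclic.''
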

\begin{proof}
The map $\pi_H$ is nothing but the right edge of the triangle in \eqref{eq:diagram_first} or \eqref{eq:diegramcone} and was explained along with the definition \ref{def: WHS} locally. Its global definition was given in the proof of Theorem \ref{th:theorem32}.
\end{proof}
\begin{remark} \label{WPS-Triangle}
One can compare the commutative diagram \eqref{eq:diegramcone} with its well-known analogue on projective spaces:
\begin{equation}
\begin{tikzcd}
 & \mathbb{P}^n \arrow{dr}{/\mathbb{Z}_{\lambda_0} \times ... \times \mathbb{Z}_{\lambda_n}} \\
\mathbb{C}^{n+1} \diagdown 0  \arrow{ur}{/\mathbb{C}^*} \arrow{rr}{/\mathbb{C}^*} && \mathbb{P}[\lambda_0,...,\lambda_n],
\end{tikzcd}    
\end{equation}
where the right-hand edge is the $\mathbb{C}^*$-fibration of the affine cone, and the horizontal map is the weighted action on the coordinate variables. Again, the right-hand edge is the map whose existence has to be deduced. 
\end{remark}
\subsection{Basic properties of a WHS}
We wish to determine when two WHS $\frac{G}{P} \left [\psi_{H_1} \right ]$ and $\frac{G}{P} \left [\psi_{H_2} \right ]$ are algebraically isomorphic. Let us first review the same question for WPS. At the level of WPS, the isomorphism problem can be answered via the toric fan of a WPS, \cite{RT}. There is a well-known method to determine when two weighted projective spaces are isomorphic. We can associate to $\mathbb{P}[\lambda_0,...,\lambda_n]$ a fan $\Sigma$ in $\mathbb{R}^n=:N$ on the standard basis by defining 
\begin{equation}
v_i=\frac{1}{\lambda_i}e_i, \qquad e_i \ \text{ standard basis in} \ \mathbb{R}^n.
\end{equation}
One may add another vector 
\begin{equation} 
v_0=-\frac{1}{\lambda_0}(e_1+e_2+...+e_n)
\end{equation} 
to build up a fan (uniquely for the fan condition). We call it the fan associated with the WPS, [see \cite{RT} for more details]. The fan above makes the WPS a toric variety. Many of the geometric properties of the WPS can be characterized by the combinatorics of their toric fans. We first explain a reduction of weights arguments. One can reduce the weights on a WPS as follows. Define,
\begin{equation}
\begin{aligned}
&d_j=lcm(\lambda_0, \dots, \widehat{\lambda_j}, \dots , \lambda_n)\\
&a_j=gcd(d_0, \dots, \widehat{d_j}, \dots , d_n)\\
&(\lambda_0', \dots , \lambda_n')= (\lambda_0/a_0, \dots , \lambda_n/a_n),
\end{aligned}
\end{equation}
then one has $\mathbb{P}[\lambda_0, \dots , \lambda_n] \cong \mathbb{P}[\lambda_0', \dots , \lambda_n']$ [\cite{RT}, proposition 1.26], and the latter WPS is called the reduction of the first (also called reduced). The major question is to characterize when two WPS's with different weight systems are isomorphic. The answer is as follows. Two reduced WPS are isomorphic if and only if their fans are equivalent, as stated in the following theorem.
\begin{theorem} [\cite{RT} proposition 2.7] \label{th:theoremWPS}
Two reduced WPS $\mathbb{P}[\lambda_0, \dots , \lambda_n]$ and $\mathbb{P}[\lambda_0', \dots , \lambda_n']$ with the associated fans $[v_1,  \dots , v_n, v_0]$ and $[v_1', \dots, v_n', v_0']$ are isomorphic iff there exists a matrix in $GL_n\mathbb{Z}$ which exchanges their associated fans of the two in $\mathbb{R}^n$, i.e if there exists $\phi \in GL_n\mathbb{Z}$ such that $\phi v_i =v_i'$ [as explained above]. It will be the same if we just consider the transfer over the vectors $v_1,...,v_n$. 
\end{theorem}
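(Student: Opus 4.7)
The plan is to leverage the toric structure of weighted projective spaces. Both $\mathbb{P}[\lambda_0,\dots,\lambda_n]$ and $\mathbb{P}[\lambda_0',\dots,\lambda_n']$ are complete $n$-dimensional toric varieties, whose fans $\Sigma$ and $\Sigma'$ sit in lattices $N \cong \mathbb{Z}^n$ and $N' \cong \mathbb{Z}^n$ with ray generators $v_0,\dots,v_n$ and $v_0',\dots,v_n'$ satisfying the unique (up to scalar) linear relations $\sum_i \lambda_i v_i = 0$ and $\sum_i \lambda_i' v_i' = 0$. The strategy is to reduce both implications to the standard classification in toric geometry: an isomorphism of toric varieties is equivalent to an isomorphism of the associated fans (Oda, Cox--Little--Schenck).

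For the $(\Leftarrow)$ direction, suppose $\phi \in GL_n\mathbb{Z}$ satisfies $\phi v_i = v_i'$ for $i = 1,\dots,n$. First I would note that because a complete fan with $n+1$ rays in $\mathbb{R}^n$ admits a unique-up-to-scalar linear relation, applying $\phi$ to $\sum_i \lambda_i v_i = 0$ forces $\phi v_0$ to be a positive scalar multiple of $v_0'$; the \emph{reduced} hypothesis on both weight systems rigidifies this scalar to $1$. Thus $\phi$ is a lattice isomorphism sending every ray of $\Sigma$ to a ray of $\Sigma'$ and preserving the combinatorial structure of the maximal cones. By the toric dictionary, $\phi$ induces an isomorphism of the associated WPS.

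For the $(\Rightarrow)$ direction, suppose $f:\mathbb{P}[\lambda_0,\dots,\lambda_n] \to \mathbb{P}[\lambda_0',\dots,\lambda_n']$ is an isomorphism of algebraic varieties. The key intermediate step is to show that, after composing with a torus translation on the target, $f$ is \emph{toric}, i.e.\ it sends the open torus $T \subset \mathbb{P}[\lambda_0,\dots,\lambda_n]$ to $T' \subset \mathbb{P}[\lambda_0',\dots,\lambda_n']$ and restricts to a group isomorphism. Once this is secured, $f$ induces a lattice isomorphism $\phi : N \to N'$. Since $f$ must send the torus-invariant prime divisors to torus-invariant prime divisors (they are characterized intrinsically as closures of codimension-one torus orbits), $\phi$ permutes the rays of the fan, and after relabeling one obtains $\phi v_i = v_i'$. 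The reduced hypothesis is essential here: it guarantees that $N$ is the precise lattice generated by the data of $\Sigma$, ruling out the overlattice ambiguity that would otherwise allow non-isomorphic unreduced WPS to share the same underlying variety.

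The main obstacle is verifying that the abstract variety isomorphism $f$ is toric up to translation. For ordinary projective space this is automatic from the conjugacy of maximal tori in $PGL_{n+1}$, but the automorphism group of a WPS is larger and more delicate, so one needs either a direct description of its maximal tori or an argument based on the singularity stratification (which is a torus-invariant stratification that can be recovered from the underlying variety). Once that step is in place, translating $f$ into a concrete matrix $\phi \in GL_n\mathbb{Z}$ with $\phi v_i = v_i'$ is a routine application of the toric dictionary.
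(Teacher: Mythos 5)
The paper does not actually prove this statement: it is quoted verbatim as Proposition~2.7 of \cite{RT}, so there is no in-paper argument to compare yours against. Judged on its own terms, your proposal follows the same toric-dictionary route that \cite{RT} uses, and the backward direction is essentially complete: a lattice automorphism $\phi\in GL_n\mathbb{Z}$ matching the rays $v_1,\dots,v_n$ forces $\phi v_0$ onto the remaining ray via the unique relation $\sum_i\lambda_i v_i=0$, and an isomorphism of fans induces an isomorphism of the toric varieties. (One small caution: as written in the paper the $v_i=\tfrac{1}{\lambda_i}e_i$ are rational, not primitive lattice vectors; the correct reading, which you implicitly adopt, is that the fan is spanned by primitive generators in a lattice $N$ chosen so that $\sum_i\lambda_i v_i=0$, the weights being recorded by that relation.)

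The one genuine gap is the step you yourself flag in the forward direction: that an abstract variety isomorphism $f$ between two WPS can be corrected to a toric morphism. You name the obstacle but leave it open, offering two possible strategies without carrying either out, and the whole implication rests on it. It can be closed, and in the standard way you gesture at: a weighted projective space is a complete simplicial toric variety, so by Cox's description of its homogeneous coordinate ring (or Demazure's structure theorem in the smooth case) $\mathrm{Aut}^{0}(X)$ is a connected linear algebraic group in which the big torus $T_N$ is a maximal torus; since maximal tori of a linear algebraic group are conjugate, $f T f^{-1}$ is conjugate to $T'$ inside $\mathrm{Aut}^{0}(X')$, and after composing $f$ with that conjugating automorphism the map is torus-equivariant, hence induced by a lattice isomorphism $\phi:N\to N'$ permuting the rays. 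You should also make explicit where reducedness enters: it guarantees that the $n+1$ primitive ray generators actually generate the lattice $N$ (no overlattice ambiguity), which is what lets you conclude $\phi\in GL_n\mathbb{Z}$ in the stated normal form rather than merely a $\mathbb{Q}$-linear identification. With those two points supplied, your outline becomes a complete proof.
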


One has an equivalent formulation via the polytopes associated with these fans [cf. \cite{RT}]. The polytope is just the convex hull of the vectors $v_i$ with the origin added. The language of fans and polytopes interchange in the toric geometry of projective varieties. If one puts the vectors $v_i$ as the columns of a matrix $[v_0,...,v_n]$, the condition above means that both of the weight matrices obtained this way has the same Hermit canonical forms, [see \cite{RT} for details]. Theorem \eqref{th:theoremWPS} above characterizes the weighted projective spaces up to isomorphism. We shall extend the argument of Theorem \ref{th:theoremWPS} to determine when two weighted homogeneous spaces of the same type $\frac{G}{P}[\psi_{H_1}]$ and $\frac{G}{P}[\psi_{H_2}]$ are algebraically isomorphic. To this end, we introduce a fan that records the weights of a WHS, and is a formal enlargement of a toric fan of the maximal torus.
\begin{definition} \label{def:extendedfan}
Assume $H \in C_+$ and $\psi_H$ be the associated weight on $\Pi$. We associate a fan (this is not a toric fan) on the coroot lattice by defining 
\begin{equation}
v_i:=\frac{1}{\langle \alpha_i, H\rangle}{\alpha_i}^{\vee}, \qquad i=1,...,n
\end{equation}
where $\alpha_i$ is a simple root. This is a fan on the coordinates of the maximal torus $T$ and we denote it by $\Sigma_H(\Pi)$. Then define a bigger fan on $\mathbb{R}^{|R(P)_+|}$, with the axis corresponding to $x_{\alpha}$, by the vectors 
\begin{equation}
v_{\alpha}=\frac{1}{\langle \alpha,H_j\rangle} e_{\alpha}, \qquad \alpha \in R_+
\end{equation}
where $e_{\alpha}$ are the standard basis of $\mathbb{R}^{|R(P)_+|}$ and add the additional vector 
\begin{equation}
v_0=-\frac{1}{\lambda_0}\sum_{\alpha \in R_+} e_{\alpha}
\end{equation}
to the $v_{\alpha}$ (uniquely) to satisfy the fan condition, [the fan condition means, the collection of $v_{\alpha}$'s together with $v_0$ satisfy a convex linear combination equal to be zero]. We denote this fan by $\Sigma_H$. 
\end{definition}
Because of the formal enlargement of the fan we applied in the definition, it fails to be a toric fan. By the way, a homogeneous space can not be toric. One can see this from a dimension criterion. The maximal torus that acts on a homogeneous space has a strictly smaller dimension than the WHS itself.

To extend the above idea over the WHS, we proceed as follows. Again we can make the weight systems on WHS reduced. Let $\psi_H$ be the weight system defined by $H \in C_+$. Again set
\begin{equation}
\begin{aligned}
&d_{\beta}=lcm \{\alpha(H) \ | \ \alpha \in J, \alpha \ne \beta\}\\
&a_{\beta}=gcd \{ d_{\alpha} \ | \ \alpha \ne \beta \}\\
&\psi_{H'}=(\alpha_H'=\alpha(H)/a_{\alpha})_{\alpha \in J},
\end{aligned}
\end{equation}
then, the same as the case of a WPS we call the weight system $\psi_{H'}$ reduced, or the reduction of $\psi_H$. 
\begin{prop} \label{thm:embedding}
Let $\frac{G}{P}[\psi_H]$ be a weighted homogeneous space. Then there exists a suitable Cartier divisor $H$ on $\frac{G}{P}[\psi_H]$ which defines a smooth embedding $(\frac{G}{P}[\psi_H], H) \hookrightarrow (\mathbb{P}_{\Delta}, \mathcal{O}(1))$ into a weighted projective space $\mathbb{P}_{\Delta}$ (associated to a polytope).   
\end{prop} 
\begin{proof} 
Consider the linear equation $\sum_{\alpha} \alpha_H'x_{\alpha}=1$. The solutions to the equation define Cartier divisors on $CH^1(\frac{G}{P}[\psi_H])$ that are also ample. That is to the solution $(\dots, b_{\alpha}, \dots)$ we associate the divisor $\sum_{\alpha}b_{\alpha}D_{\alpha}$, where $D_{\alpha}$ corresponds to the Poincar\'e dual of the differential (1,1)-form $dx_{\alpha} \wedge dx_{-\alpha}$. Define the points in $M=\mathbb{R}^n$ by 
\begin{equation}
    P_{\alpha}=(0, ..., \delta/\alpha(H), ..., 0), \qquad  \alpha \in J.
\end{equation}
Set $\delta=lcm \{\alpha(H)\ | \ \alpha \in J \}$, and let $\Delta_J$ be the $|J|$-dimensional simplex obtained as the convex hull of the origin and the points $P_{\alpha}$. The polytope $\Delta_J$ do characterizes a weighted projective space $\mathbb{P}_{\Delta_J}$, and also we have a natural map $\frac{G}{P}[\psi_H] \hookrightarrow \mathbb{P}_{\Delta_J}$ by the way of construction. 
\end{proof}
Similar to the weighted projective spaces, we can make the weight systems over a weighted homogeneous space reduced. Naturally, two WHS of the same type but with different weight systems are isomorphic. For instance, the reader may guess that the resulting WHS is the resulting WHS on a weighted root system of type $A_n$ if we make a permutation on the weights by an element of the symmetric group $S_n$ isomorphic to the previous one. In this regard, we present our following main result.
\begin{theorem}[Main Result]
The two weighted homogeneous spaces $\frac{G}{P}[\psi_{H_1}]$ and $\frac{G}{P}[\psi_{H_2}]$ are isomorphic if and only if there exists a matrix in $GL_n\mathbb{Z}$ exchanging their fans, i.e if there exists an isomorphism $\phi:\Sigma_{H_1}(\Pi) \stackrel{\cong}{\longrightarrow} \Sigma_{H_2}(\Pi)$ where $\phi \in GL_{|R(P)_+|}(\mathbb{Z})$. 
\end{theorem}
\begin{proof}
Consider the extended fan defined in Definition \ref{def:extendedfan} on $\mathbb{R}^{|R(P)_+|}$ on the standard basis labeled by the positive roots $\alpha \in R(P)$, i.e. 
\begin{equation}
v_{\alpha}^j=\frac{1}{\langle \alpha,H_j\rangle} e_{\alpha}, \qquad j=1,2,
\end{equation}
where $e_{\alpha}$ are the standard basis of $\mathbb{R}^{|R(P)_+|}$. One can also add the additional vector
\begin{equation}
v_0^j=-\frac{1}{\lambda_0}\sum_{\alpha \in R(P)_+} e_{\alpha}
\end{equation}
to the $v_{\alpha}$ (in a unique way) to satisfy the fan condition. From our definition of weighted homogeneous space it follows that if we have the isomorphism  $\Sigma_{H_1} \stackrel{\phi}{\longrightarrow} \Sigma_{H_2}$, then, the two weighted homogeneous spaces are isomorphic, i.e. $\frac{G}{P}[\psi_{H_1}] \stackrel{\cong}{\longrightarrow} \frac{G}{P}[\psi_{H_2}]$. For the converse implication assume we have an isomorphism of two WHS associated to $\psi_{H_1}$ and $\psi_{H_2}$. The isomorphism of the two WHS induces an isomorphism of 
\begin{equation} \mathcal{O}_{\frac{G}{P}[\psi_{H_1}]}(1) \cong \mathcal{O}_{\frac{G}{P}[\psi_{H_2}]}(1).
\end{equation} 
Any Cartier divisor representing these line bundles can be given as $\sum_j a_jD_j$ where $(a_1, \dots , a_{|R(P)_+|})$ is an integer point in the hyperplane 
\begin{equation} 
V: \sum_{\alpha}\lambda_{\alpha}x_{\alpha}-\delta=0, \ \ \   resp. \  V': \sum_{\alpha}\lambda_{\alpha}'x_{\alpha}-\delta'=0
\end{equation} 
where $\delta$ and $\delta'$ denotes the least common multiple of the weights]. Because these representatives are getting exchanged under the given isomorphism, it follows that any integer point of the $V$ and $V'$ can be transferred to each other. This proves the existence of the desire isomorphism $\phi \in GL_{|R(P)_+|}(\mathbb{Z})$ on lattices. Therefore, we have proved
\begin{equation} 
\frac{G}{P}[\psi_{H_1}] \stackrel{\cong}{\longrightarrow} \frac{G}{P}[\psi_{H_2}]  \Leftrightarrow  \Sigma_{H_1} \stackrel{\phi}{\longrightarrow} \Sigma_{H_2} 
\end{equation}
where $\phi \in GL_{|R(P)_+|}\mathbb{Z}$. Because of the formal enlargement of the space of lattices in the Definition \ref{def:extendedfan} and the linearity, we also have
\begin{equation}
\Sigma_{H_1} \stackrel{\cong}{\longrightarrow} \Sigma_{H_2}  \Leftrightarrow   \Sigma_{H_1}(\Pi) \stackrel{\cong}{\longrightarrow} \Sigma_{H_2}(\Pi)   
\end{equation}
exchanging $v_{\alpha}^{(i)},\ i=1,2, \ \alpha \in \Pi$. This completes the proof.
\end{proof}
Algebraic maps between homogeneous space $G/P \to G/P$ are ordinary polynomial maps on the coordinates of the projective varieties. For weighted homogeneous spaces, a regular map is given locally as
\begin{equation} \label{eq:polynomialmap}
\begin{aligned} 
f: \frac{G}{P}[\psi_{H_1}] \  &\longrightarrow \  \frac{G}{P}[\psi_{H_2}],\\
(x_{\alpha}) &\longmapsto (P_{\alpha}(\dots x_{\beta}^{m_{\alpha \beta}} \dots )), 
\end{aligned}
\end{equation}
where $P_{\alpha}$ are polynomials in the variables $x_{\beta}$, in a way that the exponents be matched with the weights. An interesting exercise is perhaps to write down examples of these maps as matrices with monomial entries. We next present this in a more precise result.
\begin{prop} [Main Result]
Assume the wight systems $\psi_{H_1}$ and $\psi_{H_2}$ are reduced. Then, there is a map 
\begin{equation} \label{eq:relativemap}
\frac{G}{P}[\psi_{H_1}] \longrightarrow  \frac{G}{P}[\psi_{H_2}]
\end{equation}
precisely when we have a homomorphism of fans
\begin{equation} 
\begin{aligned}
\Sigma_{H_1}(\Pi) &\longrightarrow \Sigma_{H_2}(\Pi)\\
v_{\alpha}^{(1)} & \longmapsto v_{\alpha}^{(2)}.
\end{aligned} 
\end{equation}
\end{prop}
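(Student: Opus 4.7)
The plan is to adapt the strategy of the preceding isomorphism theorem, in which each WHS is expressed as a global finite quotient of $G/P$ via Theorem \ref{th:theorem32}, and to upgrade its coordinate-ring argument so that it applies to arbitrary morphisms rather than only to isomorphisms. The key observation is that a fan homomorphism $\Sigma_{H_1}(\Pi) \to \Sigma_{H_2}(\Pi)$ sending $v_{\alpha}^{(1)} = \alpha^{\vee}/\langle \alpha, H_1 \rangle$ to $v_{\alpha}^{(2)} = \alpha^{\vee}/\langle \alpha, H_2 \rangle$ encodes exactly an integrality condition on the ratios $\langle \alpha, H_1\rangle / \langle \alpha, H_2\rangle$ along each coroot axis, and this condition translates directly into a compatibility between the cyclic groups $\mathbb{Z}_{\langle \alpha, H_i\rangle}$ that appear in the presentation $G/P[\psi_{H_i}] = [\times_{\alpha} \mathbb{Z}_{\langle \alpha, H_i\rangle}]\setminus G/P$.

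For the $(\Leftarrow)$ direction, I would first extract from the fan homomorphism the integer ratios on each coordinate axis, and use them to produce the corresponding map of the acting finite groups. By Theorem \ref{th:theorem32} each WHS is a quotient of the common projective variety $G/P$ by such a finite group, and the group compatibility lets one factor the quotient map of $G/P$ through the second WHS, yielding the desired morphism $G/P[\psi_{H_1}] \to G/P[\psi_{H_2}]$. On each Bruhat chart $U(P) \cong \mathbb{C}^{|R(P)|}$ the map then reads in coordinates as a monomial map $x_{\alpha} \mapsto x_{\alpha}^{\langle \alpha, H_1\rangle/\langle \alpha, H_2\rangle}$, which is manifestly well-defined on the quotient orbifolds.

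For the $(\Rightarrow)$ direction, pull a morphism $f$ back to the graded coordinate rings. By Theorem \ref{th:theorem32},
\begin{equation}
\mathbb{C}\!\left[\tfrac{G}{P}[\psi_{H_i}]\right] \cong \mathbb{C}\!\left[\tfrac{G}{P}\right]^{\times_{\alpha}\mu_{\langle \alpha, H_i\rangle}},
\end{equation}
with a multi-grading in which $x_{\alpha}$ has weight $\langle \alpha, H_i\rangle$. A ring homomorphism between these invariant subrings must preserve monomial weights, so $x_{\alpha}$ must be sent to an expression whose degrees relate $\langle \alpha, H_1\rangle$ and $\langle \alpha, H_2\rangle$ by an integer ratio along each axis; reading these ratios on the coroot basis produces the required integer linear transformation $v_{\alpha}^{(1)} \mapsto v_{\alpha}^{(2)}$, which automatically respects the cone structure of the two fans since the $x_{\alpha}$ remain coordinate directions.

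The hard part will be verifying that the chart-wise monomial maps glue consistently across Bruhat strata under the Weyl cross action $s_{\alpha} \times PwB \mapsto Pws_{\alpha}^{-1}B$, since the morphism of WHS is only constrained to respect the local orbifold charts and this cross action, not to descend from a global quotient of any single ambient space. As in the isomorphism theorem, the cleanest way around this gluing issue is to argue intrinsically through the multigraded invariant coordinate ring supplied by Theorem \ref{th:theorem32}, which captures the whole variety at once and bypasses the need to track transition functions chart-by-chart.
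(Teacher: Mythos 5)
Your proposal is correct and follows essentially the same route as the paper: both directions are settled by passing to the (multi)graded coordinate rings via Theorem \ref{th:theorem32}, observing that the local coordinates of $\frac{G}{P}[\psi_{H_i}]$ are the powers $z_{\alpha}^{\langle \alpha,H_i\rangle}$, and translating the existence of a well-defined ring homomorphism into the divisibility condition $\langle \alpha,H_2\rangle \mid \langle \alpha,H_1\rangle$ on each coroot axis, which is exactly the fan homomorphism $v_{\alpha}^{(1)}\mapsto v_{\alpha}^{(2)}$. Your extra care about the finite-group compatibility and the gluing across Bruhat cells under the cross action is a reasonable elaboration of points the paper leaves implicit, not a different argument.
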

\begin{proof}
As in the proof of the previous theorem a map $\Sigma_{H_1}(\Pi) \longrightarrow \Sigma_{H_2}(\Pi)$ induces also a map on the larger fans on the weights on all roots (in fact, coroots by our construction). By the construction in the Theorem \ref{thm:embedding}, this induces a map on the corresponding polytopes of the WPS's where these two WHS can be embedded. By toric geometry we obtain a well defined map between the corresponding WPS's. The restriction to $\frac{G}{P}[\psi_{H_1}]$ gives the desired map we look for. Conversely if we have a map $\frac{G}{P}[\psi_{H_1}] \longrightarrow  \frac{G}{P}[\psi_{H_2}]
$, then, by writing the coordinates out of weights, each coordinate $x_{\alpha}$ is replaced by a variable $z_{\alpha}^{\langle \alpha,H\rangle}$. Other way to explain this is to look at the induced map 
\begin{equation} \label{eq:Iso}
\mathbb{C}\left [\frac{G}{P}[\psi_{H_2}] \right ] \longrightarrow  \mathbb{C}\left [\frac{G}{P}[\psi_{H_1}]\right ]
\end{equation} 
Then the components of the map must be functions of $z_{\alpha}^{\langle \alpha,H\rangle}$. Therefore, the weights in the right hand side of this map are divisible by those in the left. This is precisely when we have $\Sigma_{H_1}(\Pi) \longrightarrow \Sigma_{H_2}(\Pi)$. 
\end{proof}
Keeping the reduced assumption on the weights, the map in \eqref{eq:relativemap} can be lifted to the affine cone over the WHS introduced in the Definition \ref{def: WHS}, 
\begin{equation} 
\begin{CD}
\left [Cone \left (\dfrac{ G}{P} \right )\diagdown 0 \right ] @>\exists>> \left [Cone \left (\dfrac{ G}{P} \right )\diagdown 0 \right ]\\
@V/\psi_{H_1}VV  @VV/\psi_{H_2}V\\
\frac{G}{P}[\psi_{H_1}] @>>f> \frac{G}{P}[\psi_{H_2}]
\end{CD}
\end{equation}
where the map on the affine cones is given by the same formula on affine charts. One can also consider a compact lift of the map $f$ as 
\begin{equation} 
\begin{CD}
G/P @>\exists>> G/P\\
@V\pi_{H_1}VV  @VV\pi_{H_2}V\\
\frac{G}{P}[\psi_{H_1}] @>>f> \frac{G}{P}[\psi_{H_2}]
\end{CD}
\end{equation}
where its existence is guaranteed by the Theorem \ref{th:theorem32}. We emphasize that the map in the upstair is given by the same formula as $f$ and it is in form \eqref{eq:polynomialmap}. Although the last theorem may influence the idea that the weighted homogeneous space behaves like toric varieties, as we mentioned for dimension reasons, neither a homogeneous space nor a weighted homogeneous space can be toric. A simple way to realize examples of the maps in \eqref{eq:Iso} is to write them as matrices with monomial entries where the exponents are compatible with the weights on different coordinates.
\section{Invariant K\"ahler forms on $\frac{G}{P}[\psi_{H}]$} \label{sec:InvKahler}
One of the significant features of the geometry of the homogeneous spaces is the study of the smooth differential forms and especially invariant differential forms on them. One way to present this notion is by using the lift of potential functions defined on $G$. This enables us to relate this notion to the structure theory of the complex reductive Lie group $G$. In this section, we propose extending the existing theorems and results on homogeneous manifolds to the case of weighted homogeneous spaces. Furthermore, we analyze how the formulas have to be modified when the coordinates are weighted. We expect this will assist us in better understanding the geometry in the weighted case.

Assume $K$ is a maximal compact subgroup of the complex reductive Lie group $G$. For an irreducible representation $\varrho$ of $G$ of highest weight $\lambda$ and highest weight vector $v_{\lambda}$ the function: 
\begin{equation}
\begin{aligned}
\phi&:G \to \mathbb{R} \cup - \infty \\
\phi&(g)=\log|\varrho(g)v_{\lambda}|
\end{aligned}
\end{equation}
is $K$-invariant, \cite{AKQ}. We may consider the highest weight representations $\rho_{\alpha}$ associated to the fundamental weights defined by 
\begin{equation}
\varrho_{\alpha}(\ \check{\beta}(z))=z^{\delta_{\alpha \beta}}    
\end{equation}
where we are using the notation in \eqref{eq:root-group}. The associated potential is defined by 
\begin{equation} 
\phi_{\alpha}(\check{\beta}(z))=\delta_{\alpha \beta}\log|z|.
\end{equation} 
The K\"ahler forms defined by the potentials $\phi_{\alpha}$ form a basis for $H^2(G/P, \mathbb{R})$. At a point $[x_0] \in G/P$ one defines the Dynkin line 
\begin{equation} 
\mathbb{P}_{\alpha}=L_{\alpha} . [x_0] \ \cong \mathbb{P}(\mathbb{C})
\end{equation} 
as homology classes where the differential 1-forms can be integrated. In \cite{AKQ}, it is shown that $\int_{\mathbb{P}_{\alpha}} w_{\beta}=\delta_{\alpha \beta}$. In order to compare this notion with the weighted one, we should consider the above forms as elements in $H^2(G/P, \mathbb{Z})$. In fact, the aforementioned classes can be calculated as Chern classes of $K$-invariant line bundles over $G/P$. 

A closed $K$-invariant $(1,1)$-form $\omega$ on $G/P$ is given by a form satisfying the identity 
\begin{equation} 
pr^* \omega=\sqrt{-1}\partial \bar{\partial} \phi,
\end{equation} 
where $\phi:G \to \mathbb{R} \cup -\infty$ is given as above, \cite{AKQ}. An application of this could be the calculation of the curvature of line bundles on the homogeneous manifolds. A simple formula on this matter is given in \cite{AKQ}. The line bundles on $G/P$ has a simple explanation in terms of the representation theory of $G$.
\begin{remark} \cite{De}
In the classical case where $G/P$ is given as a flag variety of descending subspaces $V_j, \ 1 \leq j \leq r$ of the vector space $V$, one can calculate the aforementioned K\"ahler forms as the curvature of certain determinant line bundles. The subspaces $V_j$ define tautological bundles $\mathcal{V}_j$ on $G/P$, where the fibers of $\mathcal{V}_j$ on $G/P$ are isomorphic to $V_j$. Now, consider the determinant bundle of the quotients given by, $\mathcal{V}_{j-1}/\mathcal{V}_{j}$
\begin{equation} \label{eq:Ljj}
L_j=\text{det} (\mathcal{V}_{j-1}/\mathcal{V}_{j}).
\end{equation} 
Then, the curvature of the line bundles $L_j$ can be calculated as 
\begin{equation} 
\Theta(L_j)_0 = - \sum_{i < j} dz_{ij} \wedge d \bar{z}_{ij} + \sum_{i>j}dz_{ij} \wedge d \bar{z}_{ij} ,
\end{equation} 
where $(z_{ij})_{i<j}$ are the coordinates on $U$, [cf. \cite{De}]. The line bundles on $G/P$ can be classified according to the weights $a=(a_1, \dots ,a_r) \in \mathbb{Z}^r$. In case one sets
\begin{equation}
L^a=L_1^{a_1} \otimes ... \otimes L_r^{a_r},
\end{equation}
the curvature of the line bundle $L^a$ can be calculated from
\begin{equation} 
\Theta(L^a)_e = \sum_{i<j}(a_i -a_j)dz_{ij} \wedge d\bar{z}_{ij} .
\end{equation}
Thus $L^a$ is K\"ahler if and only if $a_i > a_j, \ i<j$. 
\end{remark}
Our strategy is to study how the theorems and results in \cite{AKQ} can be stated when the coordinates have some weights. Especially the $K$-invariant K\"ahler forms can also be studied over the weighted homogeneous spaces. First, we consider the setup given at the beginning of this section. Assume we are given the weight system $\psi_H$ and the WHS, namely $\frac{G}{P}[\psi_H]$ as defined in section \ref{sec:WeightedHomogSp}. As mentioned before, the WHS is a singular variety. Thus we talk about differential forms on their smooth locus. Especially the invariant K\"ahler forms are defined only on the smooth locus of $\frac{G}{P}[\psi_H]$. Our approach to the proof of the main result below uses the context introduced at the end of Section \ref{sec:HomogSp} together with the one in \cite{Al}.
\begin{theorem}[Main Result] \label{Thm:InvariantForm}
If $\omega$ is a closed $K$-invariant $(1,1)$-form on $\frac{G}{P}[\psi_{H}]$, then its pull back over $G$ via the projection map can be written as 
\begin{equation} 
pr^* \omega=\sqrt{-1}\partial \bar{\partial} \phi,
\end{equation} 
where $pr:G \to G/P \to \frac{G}{P}[\psi_{H}]$ is the composition map and  
\begin{equation} \label{eq:weighted-potential}
\begin{aligned}
\phi&:G \to \mathbb{R} \cup -\infty\\
\phi&(g)=\sum_{J(P)} c_{\alpha}\log \| \rho_{\alpha}(g).v_{\alpha} \| .
\end{aligned}
\end{equation}
The form $\omega$ is K\"ahler iff $c_{\alpha}>0$. 
\end{theorem}
\begin{proof}
Assume $K$ is maximal compact of $G$. Fix the two root decompositions:
\begin{equation}
\mathfrak{g}=\mathfrak{h} \oplus \bigoplus_{\alpha \in R} \mathbb{C}E_{\alpha}, \qquad \mathfrak{k}=\mathfrak{h} \oplus \bigoplus_{\beta \in R_c} \mathbb{C} E_{\beta}   
\end{equation}
and let $\Pi$ and $\Pi_c$ the corresponding set of simple roots, respectively. Set $R'=R \setminus R_c,\ \Pi'=\Pi \setminus \Pi_c$. Then the complex tangent bundle of $X$ is given by 
\begin{equation}
T_0X=\sum_{\alpha \in \Pi \setminus \Pi_c}\mathbb{C} E_{\alpha}   .
\end{equation}
Let $P=LN$ be the Levi decomposition of $P$. By Theorem \ref{thm:K-G-P} we have
\begin{equation}
K \setminus G/P \stackrel{\cong}{\longrightarrow} \left ( L^{\theta}\setminus L/P \cap L \right ) \times_{W_L} W    
\end{equation}
where the fiber product is w.r.t. the cross action of $W_L$ on $L^{\theta}\setminus L/P \cap L$. Because the roots that are fixed by $\theta$ are the compact roots, i.e. the roots in $R^c$, then the roots involving in the right hand side are those in $R'$. According to [\cite{Al} page 27] there is a 1-1 correspondence between elements of $\mathfrak{h}_{\mathbb{R}}$ and the $K$-invariant forms, as
\begin{equation} \label{eq:Kahler-form}
\mathfrak{t}^* \ni  \xi=\sum c_r\tilde{\alpha}_r \ \ \stackrel{1-1}{\Leftrightarrow}\ \  \frac{i}{2\pi}\sum_{\alpha \in R_+'} \langle \alpha | \xi \rangle w^{\alpha} \wedge w^{-\alpha}   ,
\end{equation}
where $w^{\alpha}$ are 1-forms dual to $E_{\alpha}$. The pull back of the (1,1)-form in the right hand side of \eqref{eq:Kahler-form} under the map $pr$ is a (1,1)-form on $G$. Thus, according to \cite{AKQ} must be of the form \eqref{eq:weighted-potential}. We note that the potentials in \eqref{eq:weighted-potential} when expanded the differential operators are (1,1)-forms in the format of \eqref{eq:Kahler-form}. This completes the proof.
\end{proof}
The natural question is how the $K$-invariant K\"ahler forms on $G/P$ and $\frac{G}{P}([\psi_H])$ are related with the local projection $\pi_H$. It is not hard to see that the pullback of $K$-invariant K\"ahler differentials by $\pi_H$ carries a $K$-invariant k\"ahler form on $\frac{G}{P}([\psi_H])$ to a $K$-invariant k\"ahler form on $G/P$. The significance of Theorem \ref{Thm:InvariantForm} is when we consider the differential forms as representatives of cohomology classes with $\mathbb{Z}$-coefficients. In this case, the coefficients $c_{\alpha}$ are to be chosen more carefully, and their relation with the unweighted differentials on $G/P$ gets more highlighted. In this case, the coefficients $c_{\alpha}$ of the pullback form are divisible by the weight $\langle \alpha, H \rangle$. This feature shows up clearly when for instance, one calculates the Chern classes of vector bundles on $\frac{G}{P}[\psi_H]$. According to [\cite{AKQ} sec 3] the invariant line bundles on $G/P$ correspond to the characters $\chi:P \to \mathbb{C}$ of the parabolic subgroup $P$.
An invariant line bundle $L$ on $\frac{G}{P} \left [\psi_H \right ]$ can be pulled back to an invariant line bundle on $G/P$ through the finite map $\pi_H$. Line bundles on WHS are those line bundles that are invariant under the action of the finite group $\times_{\alpha} \mu_{\alpha}$. Therefore, we can also parametrize the $K$-invariant line bundles over $\frac{G}{P} \left [\psi_H \right ]$ by the characters $\chi:P \to \mathbb{C}$.
\begin{prop} [Main-Result] \label{prop:chernclass}  
Assume $L_{\chi}$ is a line bundle on $\frac{G}{P} \left [\psi_H \right ]$ corresponding to the character $\chi:P \to \mathbb{C}^*$. Then,
\begin{equation} \label{eq:chernclass}
c_1(L_{\chi})= -\sum_{\alpha \in \Pi(J)} \langle \chi, \check{\alpha} \rangle \langle \alpha, H \rangle [w_{\alpha}].
\end{equation}
In particular, the first Chern class of the canonical line bundle $K_{\psi_H}$ is equal to
\begin{equation} \label{eq:canon-bundle}
c_1(K_{\psi_H})= -\sum_{\alpha \in J} \langle \rho, \check{\alpha} \rangle \langle \alpha, H \rangle [w_{\alpha}],
\end{equation}
where $\rho$ is the sum of all positive roots in $J$.
\end{prop}
\begin{proof}  
We shall consider the differential forms on the smooth locus of $\frac{G}{P} \left [\psi_H \right ]$. Under the projection $\pi_H:\frac{G}{P} \longrightarrow \frac{G}{P} \left [\psi_H \right ]$, the pull back of an invariant K\"ahler differential is defined by the potentials \eqref{eq:weighted-potential}. The Chern class in \eqref{eq:chernclass} is of the form $\sum_{\alpha \in S \setminus \pi} c_{\alpha}[w_{\alpha}]$, where $\pi$ consists of the simple roots supporting the Levi component of $P$. The coefficients in the sum are calculated via the formula 
\begin{equation} \label{eq:int-chern}
c_{\alpha}=\int_{\mathbb{P}_{\alpha}[\psi_H]}c_1(L_{\chi})
\end{equation} 
where $\mathbb{P}_{\alpha}[\psi_H]$ are the Dynkin lines. Because the pull back of the Dynkin lines on $\frac{G}{P} \left [\psi_H \right ]$ through $\pi_H$ are the Dynkin lines of $G/P$, we need to look at  $\int_{\mathbb{P}_{\alpha}[\psi_H]}[w_{\alpha}]=\langle \alpha ,H \rangle $. The integral in \eqref{eq:int-chern} can be written as 
\begin{equation} \int_{\mathbb{P}_{\alpha}[\psi_H]}c_1(L_{\chi})=-\langle \chi, \check{\alpha} \rangle \langle \alpha, H \rangle.
\end{equation}
The last criteria in \eqref{eq:canon-bundle} follows from the fact that the canonical bundle is the line bundle associated to the character $\chi_K=\sum_{\alpha} \alpha$.
\end{proof}
The proposition \ref{prop:chernclass}, (as well as the equation \eqref{eq:weighted-potential}) can be well understood when we consider the Chern classes as cohomology classes with $\mathbb{Z}$-coefficients, i.e., $c_1(L_{\chi}) \in H^2(\frac{G}{P} \left [\psi_H \right ], \mathbb{Z})$. In this case the multiplicities appearing in the weighted case play an important role, i.e., they need to be chosen so that the class of the K\"ahler form lies in $H^2(\frac{G}{P} \left [\psi_H \right ], \mathbb{Z})$. We have prepared everything to extend the main result of \cite{AKQ} to the weighted case.
\begin{corollary}[Main-Result]\label{cor:KahEinest}
Assume $P \subset Q$. The $K$-invariant K\"ahler-Einstein metric of $G/P$ restricted to any fiber of the fibration $\frac{G}{P} \left [\psi_H \right ] \longrightarrow \frac{G}{P} \left [\psi_H \right ]$ is K\"ahler-Einstein.
\end{corollary}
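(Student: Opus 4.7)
The plan is to use the natural fibration induced by the inclusion $P \subset Q$: unweighted, this is $G/P \to G/Q$ with typical fiber the generalized flag manifold $Q/P$ of the Levi $L_Q$ (with parabolic $L_Q \cap P$), and by the locally finite quotient property of Theorem \ref{th:theorem32} it descends to a fibration between the weighted spaces (the statement contains a typographical slip; the base should read $\tfrac{G}{Q}[\psi_H]$). Because both the K\"ahler and the Einstein conditions are local and $\pi_H$ is an isomorphism on the smooth locus, I would first establish the restriction statement for $G/P \to G/Q$ and then transport it along $\pi_H$.

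First I would write the $K$-invariant K\"ahler--Einstein form $\omega_{KE}$ on $G/P$ in terms of the potential supplied by Theorem \ref{Thm:InvariantForm}, namely $\phi(g) = \sum_{\alpha \in J(P)} c_\alpha \log \|\rho_\alpha(g) v_\alpha\|$, with the coefficients $c_\alpha$ pinned down by $\mathrm{Ric}(\omega_{KE}) = \omega_{KE}$; comparing with the first Chern class formula of the preceding corollary yields $c_\alpha = \langle \rho_G, \check\alpha\rangle$, where $\rho_G$ is the relevant half-sum of positive roots of $G$ outside the Levi of $P$. I would then restrict $\phi$ along $Q \hookrightarrow G$. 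The index set splits as $J(P) = J(Q) \sqcup (I_Q \setminus I_P)$. For $\alpha \in J(Q)$ the parabolic $Q$ stabilizes the line $[v_\alpha]$, so $\rho_\alpha(q) v_\alpha = \chi_\alpha(q) v_\alpha$ for a character $\chi_\alpha$ that is unitary on $K_Q = K \cap Q$; hence those terms restrict to constants and drop out of the potential. For $\alpha \in I_Q \setminus I_P$ the representation $\rho_\alpha|_{L_Q}$ is irreducible with $v_\alpha$ still a highest weight vector, and its highest weight descends to the fundamental weight of $L_Q$ attached to $\alpha$. Consequently $\phi|_Q$ is, up to an additive constant, the standard $K_Q$-invariant potential on $Q/P$ produced by Theorem \ref{Thm:InvariantForm} applied to $(L_Q, L_Q \cap P)$, with inherited coefficients $c_\alpha = \langle \rho_G, \check\alpha\rangle$ for $\alpha \in I_Q \setminus I_P$.

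The main obstacle, and the only non-formal step, is to identify these inherited coefficients with the intrinsic K\"ahler--Einstein coefficients $\langle \rho_Q, \check\alpha\rangle$ of $Q/P$, where $\rho_Q$ is the corresponding half-sum of positive roots of $L_Q$ outside $L_P$. Decomposing the positive roots of $G$ outside $L_P$ as those of $L_Q$ outside $L_P$ plus those of $G$ outside $L_Q$, I obtain $\rho_G = \rho_Q + \rho'$, with $\rho'$ the half-sum of positive roots in the nilradical $\mathfrak{n}_Q$. The crucial observation is that $\rho'$ is a character of $L_Q$ (because $L_Q$ acts on $\mathfrak{n}_Q$ and the sum of the weights is $L_Q$-invariant), hence vanishes on the derived subalgebra of $\mathfrak{l}_Q$; since $\alpha \in I_Q$ implies $\check\alpha \in [\mathfrak{l}_Q, \mathfrak{l}_Q]$, this gives $\langle \rho', \check\alpha\rangle = 0$ and therefore $\langle \rho_G, \check\alpha\rangle = \langle \rho_Q, \check\alpha\rangle$. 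Thus the restricted potential is exactly the K\"ahler--Einstein potential on the fiber, which by Theorem \ref{Thm:InvariantForm} and the positivity criterion is K\"ahler with the Einstein normalization; descending along $\pi_H$ transfers the conclusion to the weighted setting and completes the proof.
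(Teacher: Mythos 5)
Your proposal is correct and follows the same route the paper intends: the paper's own proof is a one-sentence deferral to \cite{AKQ} (``the proof is the same as in the unweighted case, with the formulas replaced by the weighted potential''), and what you write out is precisely that potential-restriction argument --- splitting the index set of the potential, discarding the pluriharmonic terms coming from $J(Q)$, and establishing the key identity $\langle \rho_G, \check{\alpha}\rangle = \langle \rho_Q, \check{\alpha}\rangle$ for $\alpha$ in the Levi of $Q$ via $\rho_G = \rho_Q + \rho'$ with $\rho'$ a character of $L_Q$, then transporting the conclusion along $\pi_H$. You also correctly identify the typographical slip (the base of the fibration should read $\tfrac{G}{Q}[\psi_H]$); your writeup supplies the details the paper omits rather than deviating from its approach.
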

\begin{proof}
The proof is a corollary of Proposition \ref{prop:chernclass} and the same argument as in \cite{AKQ}. We briefly sketch the necessary modifications. As it is explained in the last section of \cite{AKQ} the first Chern class of $\frac{G}{P}[\psi_H]$ is the Chern class of the line bundle associated to the character $\chi=\sum_{\alpha \in R_u(P)} -\langle \alpha, H \rangle \alpha$. The Levi-complement of $P$ is described by a set $\pi$ of simple roots, and $\alpha \in R_u(P)$ if and only if $\alpha$ is a positive root not supported by $\pi$. By the formula in Proposition \ref{prop:chernclass}, we have 
\begin{equation} \label{eq:1}
c_1(\frac{G}{P}[\psi_H])=-\sum_{\alpha \in \Pi \smallsetminus \pi} \langle \chi, \check{\alpha} \rangle \langle \alpha, H \rangle [w_{\alpha}].
\end{equation}
It is known that $c_1(\frac{G}{P})$ and hence $c_1(\frac{G}{P}[\psi_H])$ is positive by Proposition \ref{prop:chernclass}. Therefore, it suffices that $c_1(\frac{Q}{P}[\psi_H])=c_1(\frac{G}{P}[\psi_H])|_{\frac{Q}{P}[\psi_H]}$ and then, show that the latter is positive. Let $\tilde{\pi}$ be the simple roots of the Levi component of $Q$. Then the character of the anti-canonical bundle of $\frac{G}{P}[\psi_H]$ can be written as the sum of two characters, one supporting the positive roots with support $\Pi \smallsetminus \tilde{\pi}$ and the other positive roots with support in $\tilde{\pi} \smallsetminus \pi$. As in \cite{AKQ},
\begin{equation} \label{eq:2}
c_1(\frac{Q}{P}[\psi_H])|_{\frac{Q}{P}[\psi_H]}=-\sum_{\alpha \in \tilde{\pi} \smallsetminus \pi} \langle \chi, \check{\alpha} \rangle \langle \alpha, H \rangle [w_{\alpha}].
\end{equation}
Now, by Theorem \ref{Thm:InvariantForm}, if \eqref{eq:1} defines an invariant K\"ahler form, then, \eqref{eq:2} also defines an invariant K\"ahker form. 
\end{proof}
The proof of Corollary \ref{cor:KahEinest} is essentially based on the analogous result of proposition \ref{prop:chernclass} in \cite{AKQ} for an ordinary homogeneous space $G/P$. The only difference is that of multiplicities appearing in the formula of Chern classes. The following example shows how the ideas and the notations work.
\begin{example}
We choose an example of \cite{CG}. Lets consider $X=Sp(4)/B$ where $B$ is the standard Borel. $B$ consist of $4 \times 4$ matrices with upper left block upper triangular and lower right block lower triangular. Then $\mathfrak{n}^-=Lie(N^-)$ consists of matrices of the form   
\begin{align*} 
\mathfrak{n}^- &= \left\{ \begin{pmatrix}
0 & 0 & 0 & 0\\
x_1 & 0 & 0 & 0\\
x_2 & x_3 & 0 & -x_1\\
x_3 & x_4 & 0 & 0 
\end{pmatrix}  \right\}\ \  \stackrel{\exp}{\longrightarrow} \\ 
N^- &=\left\{ \begin{pmatrix}
0 & 0 & 0 & 0\\
x_1 & 0 & 0 & 0\\
P_1(x) & P_3(x) & 0 & -x_1\\
P_2(x) & x_4 & 0 & 0 
\end{pmatrix}  \right\}
\end{align*}
where $P_1, P_2, P_3$ can be calculated specifically by exponentiation of the matrix in the left. We have two simple roots $\{\alpha_1=\epsilon_1-\epsilon_2, \ 2\epsilon_2\}$ and $4$ positive roots $\{\alpha_1, \alpha_2, \alpha_1+\alpha_2, 2\alpha_1+\alpha_2\}$. The vectors $v_{\alpha}$ in the Theorem \ref{Thm:InvariantForm} are given by $e_1,\ e_1 \wedge e_2$ in standard basis of $\mathbb{C}^4$, with corresponding highest weight representations to be $\mathbb{C}^4, \ \bigwedge^2 \mathbb{C}^4$. Therefore a potential for the K\"ahler form can be written as:
\begin{equation}
\phi=\frac{c_1}{2\pi} \log \|g.e_1\|+\frac{c_2}{2\pi} \log\|g.(e_1 \wedge e_2) \|, \ c_1, c_2 >0
\end{equation}
expansion gives (cf. \cite{CG})
\begin{align*}
\phi(z)&= \frac{c_1}{2\pi} \log (1+|z_1|^2+|P_1(z)|^2+|P_2(z)|^2)\\ 
&+\frac{c_2}{2\pi} \log{ \left( \sum_{i<j}\det \begin{pmatrix} 
1 &0\\
z_1 & 1\\
P_1(z)&P_3(z)\\
P_2(z)&z_4
\end{pmatrix}^2 \right)}
\end{align*}
\end{example}
The previous example provides a general framework in which, in all other similar cases, one can compute the pluriharmonic potential function $\phi$ on $G$; see \cite{CG} for more computational examples.
\section{Cluster algebras} \label{sec:Prel-ClusterAlg}
This section is an overview of the classical theory of cluster algebras. The significant references are \cite{CR, FZ1, FZ2, FZ3, FZ4, FZ5, FZ6}. The expert reader may skip this section and directly go to the next. Cluster algebras appear as the coordinate ring of flag varieties. They provide a combinatorial framework to understand the geometry of homogeneous spaces. In this section, we present basic definitions and constructions of the cluster algebras of finite type. We also give an overview of the approach via quivers. In the next section, we employ a weighted version of the notions provided in this section.

\subsection{Cluster algebras by exchange graphs}
Consider a triangulation $T$ of an $(n+3)$-gon and mark the $n$ diagonal by $1,2,...,n$ and the $n+3$ sides by $n+1,...,2n+3$. The combinatorics of $T$ can be described by a $(2n+3) \times n$ adjacency matrix $\hat{B}=(b_{ij})$,
\begin{equation}
b_{ij}=\begin{cases}1, \ \ \ \ \ \text{if $i,j$ label two sides of a triangle clockwise}\\
-1, \ \    \text{similar but counter clockwise}\\
0, \ \ \ \ \ \ \text{otherwise}.
\end{cases}
\end{equation} 
Notice that $i$ is assigned to a side but $j$ to a diagonal. The principal part of the adjacencies is encoded in the $n \times n$ matrix $B=(b_{i,j})$ that encodes adjacencies of the diagonals. We describe matrix mutation operations between these triangulation by diagonal flips (in direction $k$) by $\mu_k:\hat{B} \longmapsto \hat{B}'$,
\begin{equation}
b_{ij}'=\begin{cases}-b_{ij},   \ \qquad \qquad  \ k \in \{i,j\}\\
b_{ij}+|b_{ik}|b_{kj}, \quad   k \notin \{i,j\},\ b_{ik}b_{kj}>0\\
b_{ij} \qquad  \qquad  \qquad \text{otherwise}
\end{cases}
\end{equation}
and write $\hat{B}'=(b_{ij}')=\mu_k(\hat{B}), \ \text{the same for} \ B=(b_{ij})$. The operation $\mu_k$ on the adjacency matrix $B$ corresponds to flipping the diagonal number $k$ in a fixed triangulation, that is, in two neighboring triangles, one removes the diagonal and adds the other opposite diagonal; we have $\mu_k(\mu_k(B))=B, \ (\text{the same for}\ \hat{B})$. Then, assign a variable to each $2n+3$ edges and assume by induction that we have associated a rational function to the edges. When doing a diagonal flip, the rational function associated with the removed diagonal, namely $X$, is replaced by the one $X'$ [where $X'$ is the new variable assigned to the new diagonal added in a flip step and is defined by the following equation], such that $XX'=ac+bd$, where $a,b,c,d$ are the sides of the neighboring triangles. The rational functions associated with the diagonals do not depend on the sequence connecting two triangulations. In general, the exchange relation corresponding to a flip in direction $k$ defined by a matrix mutation $B=(b_{ij})$ is given by
\begin{equation}
 X_kX_k'=\prod_{b_{ik}>0}x_i^{b_{ik}}+\prod_{b_{ik}<0}x_i^{-b{ik}}  .
\end{equation}
The process above provides a method to define certain rational functions with non-negative coefficients in the field of rational functions over some variables. We start with arbitrary variables assigned to a triangulation as above and then apply all possible mutations one by one. The cluster algebra is the ring generated by all these variables together with their mutation flips. We adjust this process in the following definitions.
\begin{definition} 
A seed is a pair $(\textbf{X},\hat{B})$ where $\textbf{X}$ is set of variables, and $\hat{B}$ is an exchange matrix.
\end{definition} 
Now we make the above description into a precise definition.
\begin{definition} (Cluster Algebra)\cite{FZ2}
A cluster algebra is a commutative ring generated in the field of rational functions in $N$ variables over $\mathbb{Q}$, by the generators from an initial seed, via the iterative mutation process as 
\begin{itemize}
\item $\underline{X}=\ \{x_1,...,x_N \}$ set of $N$ algebraically independent generators which split into disjoint sets 
\begin{equation} 
\begin{aligned}
C&=\{  x_1,...,x_n \}, \qquad  \text{(cluster variables)}\\
F&=\{ x_{n+1},...,x_N \}, \qquad  \text{(frozen variables)}\\
X&=C \cup D.
\end{aligned}
\end{equation}
\item An $N \times n$ matrix $\hat{B}=(b_{ij})$ and its principal part $B$ which is skew symmetrizable, (i.e. there exists a positive diagonal matrix $D$ such that $DBD^{-1}$ is skew symmetrizable) called exchange matrix. A seed mutation in direction $k$ transforms 
\begin{equation} 
\begin{aligned}
\mu_k&:(\underline{X},\hat{B}) \longmapsto (\underline{X}',\hat{B}')\\ \underline{X}'&=\underline{X} \setminus \{x_k\} \cup \{x_k'\}\\ \hat{B}'&=\mu_k(\hat{B}).
\end{aligned}
\end{equation}
\end{itemize}
\end{definition}
Matrix mutations preserve rank and symmerizability with the same $D$. A Seed mutation is an equivalence relation on seeds. If $S$ is an equivalence class we set $X=X(S)$ to be the union of all clusters of all seeds in $S$. The cluster algebra $A(S)$ is generated by the clusters in $X$ and the frozen variables $x_{n+1},...,x_N$ and their inverses
\begin{equation} 
A(S)=\mathbb{C}[X(S), x_{n+1}^{\pm 1},...,x_N^{\pm 1}].
\end{equation}
where $n$ is called the $rank(A)$. 

A surprising fact is that the Laurent polynomials that appear in a sequence of flips all have positive integers as coefficients. This appears as one of the essential conjectures in this area. One can prove a weaker version of this fact as follows.
\begin{theorem} (Laurent Phenomenon) \cite{FZ3}
The cluster variables are Laurent polynomials with integer coefficient (positive conjectural) in the variables $\{ x_1,...,x_N \}$. 
\end{theorem}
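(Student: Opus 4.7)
The plan is to proceed by induction on the minimal length $\ell$ of a mutation sequence needed to reach a given cluster variable $z$ from the initial seed $(\underline{X},\hat{B})$. The case $\ell=0$ is immediate, and $\ell=1$ follows directly from the exchange relation
\begin{equation}
x_k x_k'=\prod_{b_{ik}>0}x_i^{b_{ik}}+\prod_{b_{ik}<0}x_i^{-b_{ik}},
\end{equation}
which exhibits $x_k'$ as a Laurent polynomial in $\{x_1,\dots,x_N\}$ with integer coefficients. The real content is the inductive step, which is the Fomin--Zelevinsky caterpillar lemma.

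First I would set up the caterpillar: let $\Sigma_0=(\underline{X},\hat{B})$, $\Sigma_1=\mu_{k_1}(\Sigma_0)$, $\dots$, $\Sigma_\ell=\mu_{k_\ell}(\Sigma_{\ell-1})$ be the sequence of seeds, where $z$ is a variable in $\Sigma_\ell$. By induction $z$ is a Laurent polynomial $L(\Sigma_1)$ in the cluster of $\Sigma_1=(x_1,\dots,x_{k_1}',\dots,x_N,\hat{B}_1)$. Substituting the exchange relation for $x_{k_1}'$ expresses $z$ as a rational function of $\{x_1,\dots,x_N\}$; the key assertion is that after clearing, the denominator is a monomial $x_1^{a_1}\cdots x_N^{a_N}$. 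To prove this, I would show that the only possible denominators are powers of $x_{k_1}$ and powers of the binomial $P:=\prod x_i^{[b_{i k_1}]_+}+\prod x_i^{[-b_{i k_1}]_+}$, and then argue that the exponent of $P$ must in fact be zero.

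The main obstacle, and the crux of the proof, is ruling out $P$ in the denominator. The standard strategy is the two-step overlap argument: consider also the seed $\Sigma_1'=\mu_{k_2}(\Sigma_0)$ and apply the inductive hypothesis to both paths $\Sigma_0\to\Sigma_1\to\Sigma_\ell$ and a modified path passing through $\Sigma_1'$; the two Laurent expansions of $z$ must agree in their common domain of definition. Since $x_{k_1}$ and the polynomial $P$ are coprime and $P$ does not appear as a denominator in the expansion obtained along the second path, the exponent of $P$ in the first expression must vanish. This is where one needs the technical lemma that coefficients in the exchange relation $x_{k} x_{k}'=M_+ + M_-$ are coprime monomials in the remaining variables, so that a pole along $\{P=0\}$ detected along one path but not the other forces a contradiction.

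Once the Laurent property is established, integrality of coefficients follows by bookkeeping: every exchange relation has coefficients in $\mathbb{Z}$ (in fact $\pm 1$ for principal coefficients, or monomials in frozen variables in general), and the substitution process preserves $\mathbb{Z}$-coefficients because clearing the monomial denominator never introduces fractions. I expect the overlap/coprimality step to be the delicate one; all other steps are essentially bookkeeping, whereas the caterpillar comparison requires the careful choice of two seeds and a precise coprimeness statement for the exchange binomials.
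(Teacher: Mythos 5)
The paper gives no proof of this theorem --- it is quoted from Fomin--Zelevinsky \cite{FZ3} --- so the only meaningful comparison is with the argument in that source, and your outline is, in substance, that argument: induction on the length of the mutation sequence, the observation that substituting $x_{k_1}'=P/x_{k_1}$ turns a Laurent polynomial over $\Sigma_1$ into one over $\Sigma_0$ with denominator of the form $x_{k_1}^{a}P^{b}$, and a second expansion plus coprimality to force $b=0$. Two places where your sketch is looser than the real proof deserve flagging. First, your comparison seed $\Sigma_1'=\mu_{k_2}(\Sigma_0)$ does not let the induction close: $z$ is not reachable from $\mu_{k_2}(\Sigma_0)$ in fewer than $\ell$ steps, so the inductive hypothesis does not apply to that path. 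In \cite{FZ3} the comparison seed is taken \emph{along} the spine, at $\mu_{k_2}(\Sigma_1)$ with $k_2\neq k_1$, and the induction is organized on the caterpillar tree (spine plus legs) precisely so that restarting the head at $t_1$ or $t_2$ yields a strictly shorter instance. Second, the coprimality of the exchange binomials is not a hypothesis you can simply invoke at the initial seed; one must verify that it is preserved under matrix mutation (this is a separate lemma in \cite{FZ2, FZ3}), and the legs of the caterpillar exist exactly to carry that verification. With those two repairs your plan is the standard proof; the final bookkeeping step on integrality of coefficients is fine as stated.
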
 
We will deal only with cluster algebras of finite type, which definition is given as follows.
\begin{definition} 
A cluster algebra is of finite type if it has finitely many seeds. 
\end{definition}
Cluster algebras of finite type have a close connection with Coxeter-Dynkin diagrams of root systems. Their classification is the same as Cartan classification of root systems. In fact, if $A=(a_{ij})$ is a Cartan matrix then because the Coxeter diagram has not any loop, it is possible to decompose the index set into two disjoint subsets:
\begin{equation} 
I=I_+ \coprod I_- ,  \ (\text{the Coxeter diagram is bipartite}), 
\end{equation} 
[cf. \cite{FR} loc. cit.] and define the adjacency matrix $B$ by
\begin{equation} \label{eq:CartanExchange}
b_{ij}=\begin{cases}0 \qquad \qquad i=j\\
a_{ij} \qquad \ \ \ \ i \ne j , \ i \in I_+\\
-a_{ij} \qquad \ \ i \ne j ,\ i \in I_- .
\end{cases}
\end{equation}
The following proposition makes this connection more precise.
\begin{prop} \cite{FZ5, FR} \label{FiniteType-Cartan}
A cluster algebra is of finite type if and only if the exchange matrix is obtained from a Cartan matrix, as \eqref{eq:CartanExchange}. This criterion is also equivalent to the inequality $|b_{ij}b_{ji}| \leq 3$ on the exchange matrix $B=(b_{ij})$. 
\end{prop}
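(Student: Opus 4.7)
The plan is to prove a cycle of three equivalences: (i) finite type, (ii) $B$ arises from a Cartan matrix via the bipartite construction (5.9), and (iii) every $2\times 2$ principal submatrix satisfies $|b_{ij}b_{ji}| \le 3$. Since (ii) manifestly implies (iii) (Cartan integers of finite type satisfy $a_{ij}a_{ji}\in\{0,1,2,3\}$), and (iii) implies (ii) by inspection of the Cartan--Killing classification, the substantive content is the equivalence (i) $\Leftrightarrow$ (ii). I would prove (ii) $\Rightarrow$ (i) and $\neg$(ii) $\Rightarrow\neg$(i) separately.

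For (ii) $\Rightarrow$ (i), assume $B$ comes from a finite-type Cartan matrix $A$ via the bipartition $I=I_+\sqcup I_-$. The strategy is to exhibit a finite set of cluster variables indexed by the almost positive roots $R_{\ge -1}=R_+\cup(-\Pi)$ of $A$, matching the dimension count $\#\text{(cluster variables)}=n+|R_+|$. Concretely, I would introduce the piecewise-linear involutions $\tau_+,\tau_-$ on $R_{\ge -1}$ (associated with the bipartition) and define cluster variables $x[\alpha]$ for $\alpha\in R_{\ge -1}$ recursively by applying $\mu_k$'s, showing via the Laurent phenomenon that the recursion closes after a Coxeter-number worth of steps. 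Combinatorially, the clusters correspond to the maximal simplices of the generalized associahedron attached to $A$, which is a finite polytope; this directly gives finitely many seeds.

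For $\neg$(ii) $\Rightarrow\neg$(i), the approach is local-to-global: it suffices to show that if some $2\times 2$ submatrix $\begin{pmatrix}0 & b\\ -c & 0\end{pmatrix}$ of $B$ (or a mutation of $B$) satisfies $bc\ge 4$, then the corresponding rank-$2$ sub-cluster algebra is already of infinite type, which forces the ambient one to have infinitely many seeds. To verify this rank-$2$ claim, I would iterate the exchange relation
\begin{equation}
x_{k-1}x_{k+1}=\begin{cases} x_k^b+1 & k\text{ odd}\\ x_k^c+1 & k\text{ even}\end{cases}
\end{equation}
and show by a direct degree/positivity estimate that when $bc\ge 4$ the sequence $\{x_k\}_{k\in\mathbb Z}$ consists of pairwise distinct Laurent polynomials, so the seeds never repeat. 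The reduction from arbitrary $B$ to rank $2$ uses that mutation restricted to a pair of indices affects only the corresponding submatrix, so infinitely many distinct rank-$2$ seeds lift to infinitely many seeds of the full algebra.

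The hard part will be the rank-$2$ infinite-type dichotomy at the boundary $bc=4$, where ordinary degree counting is delicate and one must track positivity of Laurent coefficients carefully; equivalently, this is where one needs the non-crystallographic affine and wild types to be ruled out. A second subtle point is verifying that mutation equivalence preserves the property ``comes from a (possibly different) Cartan matrix of finite type'': I would handle this by showing that the bipartite belt of mutations acts by the Coxeter element on the root lattice and that any mutation out of the bipartite seed can be connected back via Fomin--Zelevinsky's compatibility relations, so that (ii) is in fact a mutation-invariant notion.
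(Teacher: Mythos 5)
The paper does not actually prove this proposition: it is quoted from Fomin--Zelevinsky's finite-type classification (\cite{FZ5}) and Fomin--Reading's notes, so your attempt has to be measured against that literature rather than against an argument in the text. Your outline correctly identifies the real shape of the Fomin--Zelevinsky proof (bipartite seeds and almost positive roots $R_{\ge -1}$ for sufficiency, generalized associahedra for finiteness of clusters, rank-$2$ reduction for necessity), but it contains two genuine gaps. First, your claim that ``(iii) implies (ii) by inspection of the Cartan--Killing classification'' is false if (iii) is read, as you state it, as a condition on a single exchange matrix: the acyclic quiver on a triangle ($b_{12}=b_{23}=b_{13}=1$) satisfies $|b_{ij}b_{ji}|\le 3$ for every pair, yet its Cartan counterpart is of affine type and the cluster algebra is of infinite type; conversely the oriented $3$-cycle has an affine Cartan counterpart but is mutation-equivalent to $A_3$ and hence finite. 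The correct criterion is \emph{$2$-finiteness}: $|b_{ij}b_{ji}|\le 3$ must hold for every exchange matrix in the entire mutation class, and ``obtained from a Cartan matrix'' means at \emph{some} (bipartite) seed. The proposition in the paper is itself imprecise on this point, and your cycle of equivalences inherits the imprecision in a way that breaks the logic.

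Second, and more seriously, your contrapositive direction $\neg(\mathrm{ii})\Rightarrow\neg(\mathrm{i})$ silently assumes that if $B$ does not arise from a finite-type Cartan matrix then some matrix in its mutation class has a $2\times2$ product $\ge 4$. That implication is precisely the hard theorem (Theorem 8.6 of \cite{FZ5}): one must show that every $2$-finite matrix is mutation-equivalent to a bipartite one whose Cartan counterpart is of finite type, and the published proof is a long combinatorial case analysis on the shapes of $2$-finite diagrams, not a local-to-global observation. Your rank-$2$ analysis (divergence of the exchange recursion when $bc\ge 4$, with the delicate affine boundary $bc=4$) and the remark that mutation in two fixed directions only touches the corresponding $2\times2$ block are both sound, but they only prove that a bad rank-$2$ block anywhere in the mutation class forces infinite type --- they do not prove that the absence of such a block forces finite type. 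As written, the proposal proves (ii)$\Rightarrow$(i) and ``bad rank-$2$ block $\Rightarrow$ infinite type,'' which together do not close the equivalence; the missing bridge is exactly the content you defer to ``Fomin--Zelevinsky's compatibility relations'' in your last sentence.
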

The Dynkin type of $A$ defined above, is called the cluster type. The definition in \eqref{eq:CartanExchange} is reversible, as 
\begin{equation}
a_{ij} =\begin{cases} -|b_{ij}|, \qquad i \ne j\\ \  0 \qquad \qquad \  i=j.
\end{cases} 
\end{equation}
In this case, there exists a skew symmetrizing matrix $D$ such that the matrix $DAD^{-1}$ is positive definite. 

In the following, we make the connection between the cluster variables of a cluster algebra of finite type and the combinatorics of the root systems more precise. Assume $R$ is an irreducible root system, $\{ \alpha_1,....\alpha_n \}$ is the set of simple roots, and the Cartan matrix $A$ is given. Let $\{ x_1,...,x_n \}$ be the cluster variables at a seed of the corresponding cluster algebra. Denote by $R_{\geq -1}$ the set of roots that are either positive or negative simple. The interrelation between cluster algebras of finite type and the root systems of Lie algebras and the structure theory of homogeneous manifolds is much deeper than what is mentioned above. The cluster variables and the root systems are related through a tropicalization map. We only mention several theorems in the following to motivate the idea.

\begin{theorem} \cite{FR, FZ5} \label{thm:tropical-coordinates}
The cluster variables are naturally labeled by the roots in $R_{\geq -1}$, i.e., one has a 1-1 assignment
\begin{equation}\label{eq:trop}
\alpha=c_1\alpha_1+...+c_m \alpha_n \ \stackrel{1-1}{ \longmapsto }\ \ \  x_{\alpha}=\frac{P_{\alpha}(x_1,...,x_n)}{x_1^{c_1}...x_n^{c_n}},
\end{equation}
where $P_{\alpha}$ is a polynomial. In this correspondence $x_{-\alpha_i}=x_i$. 
\end{theorem}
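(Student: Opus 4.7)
The plan is to generate every cluster variable from the initial seed by a pair of commuting compound mutations and to match that seed dynamics, step by step, with a combinatorial dynamics on the set $R_{\geq-1}$ of almost-positive roots. The bipartite decomposition $I=I_+\sqcup I_-$ furnished by Proposition \ref{FiniteType-Cartan} is the key structural input: by construction $b_{ij}=0$ whenever $i,j$ lie in the same part, so the elementary mutations indexed by $i\in I_\varepsilon$ commute pairwise, and their product $\mu_\varepsilon=\prod_{i\in I_\varepsilon}\mu_i$ is a well-defined involution on seeds. Starting from the initial seed $(\underline X,B)$ with $\underline X=\{x_1,\ldots,x_n\}$ and normalizing via $x_{-\alpha_i}:=x_i$, every subsequent cluster variable can be reached by an alternating word $\cdots\mu_+\mu_-\mu_+$ applied to the initial seed.

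Next I would define on the root lattice piecewise-linear involutions $\tau_\pm$ which alter only the coefficients indexed by $I_\pm$, built from the Cartan entries $a_{ij}$ and the positive parts $[x]_+=\max(x,0)$. A case-by-case check using the finite-type hypothesis shows that $\tau_\pm$ preserve the subset $R_{\geq-1}$ and that the orbit of the negative simple roots $\{-\alpha_1,\ldots,-\alpha_n\}$ under the dihedral group $\langle\tau_+,\tau_-\rangle$ is the whole of $R_{\geq-1}$. This provides the combinatorial template that will label the cluster variables.

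The Laurent Phenomenon now guarantees that any cluster variable $y$ produced from the initial seed has the form
\[
y=\frac{P(x_1,\ldots,x_n)}{x_1^{c_1}\cdots x_n^{c_n}}
\]
with $P$ a polynomial coprime to each $x_i$, yielding a canonical denominator vector $(c_1,\ldots,c_n)\in\mathbb{Z}_{\ge 0}^n$. Applying the exchange relation
\[
x_k\,x_k'=\prod_{b_{jk}>0}x_j^{b_{jk}}+\prod_{b_{jk}<0}x_j^{-b_{jk}}
\]
at all indices of one colour simultaneously, and inducting on the distance from the initial seed, one checks that the denominator vector transforms under $\mu_\varepsilon$ by precisely the rule defining $\tau_\varepsilon$; in particular the initial datum $x_{-\alpha_i}=x_i$ matches the root $-\alpha_i$. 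The main obstacle is the non-cancellation step: one must rule out that after a mutation any $x_k$ becomes a common factor of numerator and denominator, which would destroy the identification of $(c_1,\ldots,c_n)$ with a root. This is exactly where the finite-type hypothesis is essential, via the strict bounds $|b_{ij}b_{ji}|\le 3$ from Proposition \ref{FiniteType-Cartan}, which keep the exchange monomials under control.

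Once this compatibility is established, the assignment $\alpha\mapsto x_\alpha$ is bijective: surjectivity follows because the $\langle\tau_+,\tau_-\rangle$-orbit of $\{-\alpha_1,\ldots,-\alpha_n\}$ exhausts $R_{\geq-1}$ while the $\langle\mu_+,\mu_-\rangle$-orbit of the initial cluster exhausts all cluster variables, and injectivity follows because distinct roots yield distinct denominator monomials and hence distinct Laurent polynomials. The formula displayed in the statement is then read off directly from the shape of $y$ after the identification $\alpha\leftrightarrow(c_1,\ldots,c_n)$.
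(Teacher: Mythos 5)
The paper does not actually prove this statement; it quotes it from the cited references (Fomin--Reading and Fomin--Zelevinsky's finite type classification), so there is no internal proof to compare against. Your outline is a faithful reconstruction of the strategy used in those sources: the bipartite splitting $I=I_+\sqcup I_-$, the commuting compound mutations $\mu_\pm$ generating the bipartite belt, the piecewise-linear involutions $\tau_\pm$ on the root lattice whose dihedral orbit of $\{-\alpha_1,\dots,-\alpha_n\}$ sweeps out $R_{\geq -1}$, and the identification of a cluster variable with a root through its denominator vector. That is the right architecture, and the normalization $x_{-\alpha_i}=x_i$ is placed correctly as the base case of the induction.

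The genuine gap is the step you yourself flag as "the main obstacle" and then dispatch in one sentence: the claim that the denominator vector transforms under $\mu_\varepsilon$ exactly by the rule $\tau_\varepsilon$, equivalently that no cluster variable $x_k$ ever divides the numerator after an exchange. The bound $|b_{ij}b_{ji}|\leq 3$ does not by itself rule out such cancellation; it only constrains the exchange monomials in rank $2$. In the actual argument one needs the coprimality of adjacent seeds (the two monomials in each exchange polynomial are coprime and the exchange polynomial is not divisible by any cluster variable), the fact that this property propagates under mutation, and a nontrivial induction along the bipartite belt --- in Fomin--Zelevinsky's treatment this occupies a substantial portion of the paper and ultimately leans on the rank-$2$ classification together with positivity input from the $Y$-system analysis. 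A second, related omission: you need the $\langle\mu_+,\mu_-\rangle$-orbit of the initial cluster to exhaust \emph{all} cluster variables (not merely some of them); this is a theorem about the bipartite belt in finite type, not an automatic consequence of the setup. Both points are provable, but as written your proposal assumes precisely the two facts that carry the weight of the theorem.
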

We may think of $x_{\alpha}$ as coordinates parametrizing a "certain" scheme. On the other hand, the left-hand side in \eqref{eq:trop} may be interpreted as corresponding tropical coordinates on a combinatorial object associated with the same scheme. Thus the theorem says how these two characterizations of the same object are related. A straightforward way to express this is that the combinatorial coordinate parametrizes the coordinates in a limit of ordinary coordinates after applying the function $\log_t$ to $x_{\alpha}$ when $t \to \infty$.

The aforementioned coordination $x_{\alpha}$ may be compared with the one we discussed in Section \ref{sec:WeightedHomogSp}. These are identical; however, one has to know that the coordinates $x_{\alpha}$ will change by the mutations. The mutations correspond to different local change of coordinates on the manifold charts of the homogeneous space $G/P$. To fix the argument, one should say that a specific parametrization of $G/P$ gives the above correspondences.

Cluster algebras lie on a bridge between geometry and combinatorics. Their theory provides a combinatorial parametrization of analogous geometric objects. As we mentioned, this exchange of terminology amounts to the exchange of the base field from a function field to a tropical semifield. In this manner, certain parametrization of geometric manifolds corresponds to combinatorial parametrization of polytopes and complexes.
\begin{definition} \cite{FR, FZ5}
The cluster complex is a simplicial complex on the set of all cluster variables, where its maximal simplices are clusters. By Theorem \ref{thm:tropical-coordinates}, the cluster complex can be labeled by the $R_{\geq -1}$ associated to some root systems. 
\end{definition} 
The following theorem mentions the duality between the cluster complex defined above and the exchange graph of the cluster algebra previously explained in this paper.
\begin{prop} \cite{FR, FZ5}
The dual graph of the cluster complex is precisely the exchange graph of the cluster algebra. 
\end{prop}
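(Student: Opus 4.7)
The plan is to verify the bijection on vertices and edges of the two graphs separately. The vertices match by definition: vertices of the dual graph of the cluster complex are its maximal simplices, which by Definition~5.5 are exactly the clusters, while the vertices of the exchange graph are also the clusters (or equivalently the seeds modulo the combinatorial data $\hat B$). So the real content of the proposition lies in identifying the edges of the two graphs.

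For the forward direction, I would show that every edge of the exchange graph yields an edge of the dual graph. If two clusters $C$ and $C'$ are connected in the exchange graph, then by definition there is a seed mutation $\mu_k$ carrying one to the other. Because $\mu_k$ only replaces the cluster variable $x_k$ by $x_k'$ and leaves the remaining $n-1$ cluster variables untouched, the symmetric difference satisfies $|C \triangle C'| = 2$. Equivalently, $C \cap C'$ has cardinality $n-1$, which is a codimension-one face of both $C$ and $C'$ in the cluster complex, so they are adjacent in its dual graph.

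The converse is the substantive step. Suppose $C$ and $C'$ are two clusters with $|C \cap C'| = n-1$, and let $x_k$ be the unique element of $C \setminus C'$ and $x'$ the unique element of $C' \setminus C$. Starting from a seed $(\underline X, \hat B)$ in which the non-frozen part of $\underline X$ is $C$, I would apply $\mu_k$ to obtain a new seed whose cluster is $(C \setminus \{x_k\}) \cup \{x_k'\}$, where $x_k'$ is prescribed by the exchange relation of the preceding section. It remains to check $x_k' = x'$, so that $C'$ is obtained from $C$ by a single mutation and thus the two clusters are joined in the exchange graph.

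The main obstacle is precisely this identification $x_k' = x'$: a priori there might be several cluster variables that, together with $C \setminus \{x_k\}$, complete a cluster. I would rule this out by invoking the Laurent phenomenon (Theorem~5.3) together with the rigidity of cluster expansions established in \cite{FZ3, FR, FZ5}: every cluster variable admits a unique Laurent expansion in the variables of any given cluster, and the exchange relation $x_k x_k' = \prod_{b_{ik}>0} x_i^{b_{ik}} + \prod_{b_{ik}<0} x_i^{-b_{ik}}$ determines $x_k'$ as an explicit Laurent polynomial in the remaining variables. Any candidate $x'$ completing the cluster $C \setminus \{x_k\}$ must satisfy the same exchange identity, leaving no room for a competitor and forcing $x_k' = x'$. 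Once this uniqueness is in place, both graphs have the same vertex set and the same edge set, so the dual graph of the cluster complex is naturally identified with the exchange graph of the cluster algebra.
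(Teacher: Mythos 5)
The paper offers no proof of this proposition at all: it is quoted verbatim from \cite{FR, FZ5} as part of the background on cluster algebras, so there is no argument of the author's to compare yours against. Judging your proposal on its own terms: the vertex identification and the forward direction are fine. A single mutation $\mu_k$ replaces only $x_k$, so mutation-adjacent clusters share $n-1$ variables and hence span a common codimension-one face of the cluster complex. (Even here you are silently using that a seed is recoverable from its cluster, so that ``the'' exchange graph on clusters is well defined; that is itself a nontrivial fact, established in finite type in \cite{FZ5}.)

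The genuine gap is in the converse. You take two clusters $C$, $C'$ with $|C\cap C'|=n-1$ and argue that the unique element $x'$ of $C'\setminus C$ must coincide with the variable $x_k'$ produced by mutating the seed of $C$ in direction $k$, on the grounds that ``any candidate $x'$ completing the cluster $C\setminus\{x_k\}$ must satisfy the same exchange identity.'' That is exactly the point that needs proof, and nothing you have invoked supplies it. The exchange relation $x_kx_k'=\prod_{b_{ik}>0}x_i^{b_{ik}}+\prod_{b_{ik}<0}x_i^{-b_{ik}}$ is attached to a particular seed and a particular mutation direction; the cluster $C'$ comes equipped with its own exchange matrix, and a priori its extra variable $x'$ has no reason to satisfy the relation computed from $(C,\hat B)$ at $k$. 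The Laurent phenomenon gives uniqueness of the Laurent expansion of a given cluster variable, not uniqueness of the cluster variable completing a given codimension-one face. What is actually needed is the statement that every codimension-one face of the cluster complex is contained in exactly two maximal simplices and that these two are related by a single mutation; in \cite{FZ5} this is proved in finite type via the bijection between cluster variables and almost positive roots and the compatibility-degree analysis underlying the generalized associahedron, not by the direct algebraic argument you sketch. Without that input your converse direction does not close.
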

As one expects, the geometry of the homogeneous space $G/P$ is very much tied with the combinatorial Lie theory of the root lattice of $G$. The cluster algebras lie on a bridge between these two theories. Roughly speaking, one can assign to roots specific coordinates on $G/P$. We note that we specifically mean the above when we talk about some coordinates associated with root systems. This is a local correspondence. The aforementioned geometric-combinatorial analysis is closely related to the one mentioned in Section \ref{sec:WeightedHomogSp} on associating a fan to a WHS.  One may make this case more precise as follows.
\begin{prop} \cite{FR, FZ5}
The roots that label the cluster variables in a given cluster form a basis of the root lattice $Q$. The cone spanned by such roots form a complete simplicial fan in the ambient real vector space $Q_{\mathbb{R}}$ (cluster fan). The fan is the normal fan of simple $n$-dimensional convex polytope in the dual space $Q_{\mathbb{R}}^{\vee}$.
\end{prop}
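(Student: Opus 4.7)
The plan is to bootstrap everything from the labeling bijection in Theorem~5.6 (cluster variables $\leftrightarrow R_{\ge -1}$) together with the combinatorial structure of the cluster complex, and then invoke the explicit construction of the generalized associahedron of Chapoton--Fomin--Zelevinsky. Throughout, I will write $\Delta(R)$ for the cluster complex, whose vertex set is $R_{\ge -1}$ and whose maximal simplices are clusters.

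For the basis claim, I would proceed by induction on the mutation distance from the initial cluster. At the initial seed $\{x_1,\dots,x_n\}$ the associated roots are $\{-\alpha_1,\dots,-\alpha_n\}$, which clearly form a $\mathbb{Z}$-basis of $Q$. A single mutation $\mu_k$ replaces $x_k$ by a new cluster variable $x_k'$ whose associated root $\alpha'$ is determined by the exchange relation in~(5.4) and the denominator formula of Theorem~5.6. A direct inspection of that denominator shows $\alpha'$ is of the form $-\alpha+\sum_{j\ne k} n_j\alpha^{(j)}$ where $\alpha$ was the root replaced and $\alpha^{(j)}$ are the other roots in the cluster; hence the transition matrix between the old and new tuples has determinant $\pm 1$ and the basis property is preserved.

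For the fan property, simpliciality is immediate from the basis statement: each maximal cone is generated by $n$ linearly independent roots. What requires work is completeness and the fact that adjacent cones meet along a common facet. I would argue that the cluster complex $\Delta(R)$ is a pseudomanifold of dimension $n-1$: two maximal simplices that share a codimension-one face are joined by a single mutation, and this corresponds geometrically to two $n$-cones sharing a common facet and lying on opposite sides of it. Combined with the local finiteness and the fact that any point in $Q_{\mathbb{R}}$ lies in some cone (which one proves by showing that the roots $R_{\ge -1}$ together positively span $Q_{\mathbb{R}}$, since $\{-\alpha_i\}$ already span a full-dimensional cone and mutation spreads the cluster roots around), one concludes that the cluster cones tile $Q_{\mathbb{R}}$ with no gaps, giving a complete simplicial fan.

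For the final assertion, I would appeal to the construction of the generalized associahedron $\mathrm{Assoc}(R)$. Concretely, one defines a polytope in $Q_{\mathbb{R}}^{\vee}$ whose facet-defining half-spaces are indexed by $R_{\ge -1}$ via inequalities of the form $\langle\alpha,\xi\rangle\le c_\alpha$ for appropriately chosen positive constants $c_\alpha$ (the Chapoton--Fomin--Zelevinsky construction). One then checks two things: (i) the vertices of this polytope are in bijection with clusters, and (ii) the primitive outer normal at each vertex is exactly the tuple of roots labeling that cluster. This identifies $\mathrm{Assoc}(R)$ as simple (since each vertex has exactly $n$ facets by the basis property of clusters) and exhibits the cluster fan as its normal fan.

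The main obstacle, and where genuinely nontrivial work lives, is the completeness of the cluster fan together with the compatibility/positivity statements needed to realize it as a normal fan. Simpliciality and local compatibility across mutations are formal consequences of Theorem~5.6, but showing that the cluster cones cover $Q_{\mathbb{R}}$ without overlap requires knowing that any ``positive" compatible collection of $n$ almost positive roots arises from some cluster; this is precisely the content of the Fomin--Zelevinsky classification via compatibility degrees, and it is the step I would quote from~\cite{FR, FZ5} rather than re-derive.
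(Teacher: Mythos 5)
The paper offers no proof of this proposition at all: it is stated as quoted background from \cite{FR, FZ5} in the classical Section 5, so there is no internal argument to compare yours against. Your sketch is, in substance, a correct reconstruction of the argument in those references (denominator-vector labeling for the basis claim, the pseudomanifold/wall-crossing structure of the cluster complex for completeness of the fan, and the Chapoton--Fomin--Zelevinsky polytope for the normal-fan statement), and you correctly identify the classification of compatible collections as the step that must be imported rather than rederived.

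Two steps in your sketch carry more weight than you acknowledge. First, the assertion that the new root has the form $-\alpha+\sum_{j\ne k} n_j\alpha^{(j)}$ is the tropicalized exchange relation for denominator vectors; a priori that recurrence produces a componentwise maximum of two nonnegative combinations of the other cluster roots, and one needs a sign-coherence-type statement (known in finite type, but nontrivial) to conclude that this maximum is itself an integer combination of the $\alpha^{(j)}$, which is what makes the transition matrix unimodular. Second, ``the roots of $R_{\geq -1}$ positively span $Q_{\mathbb{R}}$'' does not by itself give completeness: a finite collection of full-dimensional simplicial cones can positively span the space without tiling it. The argument that actually works is the one you gesture at with the pseudomanifold property --- every wall separates exactly two maximal cones lying on opposite sides, the exchange graph is connected, and a degree or covering argument then forces the cones to cover $Q_{\mathbb{R}}$ without overlap. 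Both points are precisely where \cite{FR, FZ5}, and \cite{CFZ} for the polytopal realization, do the real work, so your decision to quote them there is the right one; just be aware that they are not formal consequences of the labeling theorem alone.
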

\subsection{Cluster Algebras by Quivers}
Another parallel approach to cluster algebras is by quivers. First, we review some basic definitions and concepts about quivers. In the next section, we crucially employ this concept. A quiver $Q$ is an oriented graph with two vertices, 'frozen' and 'mutable.' We ignore edges connecting frozen vertices. A mutation on a vertex $z$ is defined in 3-steps as follows:
\begin{itemize}
    \item To each $x \to y \to z$ produce a new $x \to z$.
    \item Reverse all the edges incident to $X$.
    \item Remove oriented 2-cycles.
\end{itemize}
Mutations are involutions, and we can describe cluster exchanges on quivers the same as before. We first define seed variables. 
\begin{definition} A seed is $(Q,\textbf{X})$ where $Q$ is a quiver labeled by a cluster and $\textbf{X}$ is a set of variables. 
\end{definition} 
A seed mutation at $z$ replaces $Q$ by $\mu_z(Q)$ and $z$ by a new cluster variable $z'$ such that
\begin{equation}
zz'=\prod_{z \leftarrow y} y +\prod_{z \rightarrow y}y \end{equation} 
assigns formal variables associated to each vertex of a quiver; the cluster ring is generated by all the variables in a mutation equivalence class. The exchange graph of $A(Q)$ does not depend on the frozen part of $Q$. The cluster algebras, which only differ in the frozen part, are said of the same cluster type, [see \cite{FR}, \cite{FZ1}, \cite{FZ5}]. A quiver is of finite (mutation) type if and only if its mutation equivalence class consists of finitely many quivers (up to isomorphism). We have the following characterization of finiteness due Gabriel. 
\begin{theorem} (Gabriel's Theorem) \cite{Rei, FZ5} \label{FiniteType-Quiver}
A cluster algebra $A(Q)$ is of finite type iff the mutable part of its quiver at some seed is an orientation of a Dynkin diagram.  
\end{theorem}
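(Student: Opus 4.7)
The plan is to reduce Gabriel's Theorem to the matrix-level criterion already proved in Proposition \ref{FiniteType-Cartan}. First I would set up the dictionary between quivers and exchange matrices: given a quiver $Q$ with $n$ mutable vertices and no oriented $2$-cycles, associate to it the $n\times n$ skew-symmetric integer matrix $B(Q)$ whose $(i,j)$-entry is the number of arrows from $i$ to $j$ minus the number of arrows from $j$ to $i$, ignoring edges between frozen vertices. A short check verifies that the three-step quiver mutation $\mu_k$ at a mutable vertex coincides with the matrix mutation formula (5.2) applied to $B(Q)$, and that the quiver exchange relation (5.13) agrees with (5.3). Hence the seed $(Q,\mathbf{X})$ and the seed $(\mathbf{X},B(Q))$ generate the same cluster algebra, and finite type for $A(Q)$ coincides with finite type for the exchange matrix $B(Q)$.

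Next, invoking Proposition \ref{FiniteType-Cartan}, $A(Q)$ is of finite type if and only if $|b_{ij}b_{ji}|\le 3$ for all $i\ne j$. Since $B(Q)$ is skew-symmetric, $b_{ji}=-b_{ij}$, so this becomes $b_{ij}^{\,2}\le 3$, forcing $|b_{ij}|\in\{0,1\}$. Equivalently, the underlying graph of the mutable part of $Q$ has at most a single edge between any two vertices, and the associated Cartan matrix obtained from (5.11),
\begin{equation*}
a_{ii}=2,\qquad a_{ij}=-|b_{ij}|\in\{0,-1\}\ (i\ne j),
\end{equation*}
is a symmetric (simply-laced) generalized Cartan matrix.

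To finish, I would invoke the positive-definiteness clause of Proposition \ref{FiniteType-Cartan}: the skew-symmetrizing diagonal matrix $D$ is the identity here, so $A$ itself must be positive definite. The classical Cartan--Killing classification of positive-definite symmetric generalized Cartan matrices then forces the diagram of the mutable part of $Q$ to be a disjoint union of simply-laced Dynkin diagrams $A_n$, $D_n$, $E_6$, $E_7$, $E_8$. Thus the mutable part is an orientation of a Dynkin diagram, giving the ``only if'' direction. For the converse, any orientation of an $ADE$ Dynkin diagram yields $B(Q)$ with $|b_{ij}|\le 1$ whose associated Cartan matrix is positive definite, so Proposition \ref{FiniteType-Cartan} again delivers finite type.

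The main obstacle is that being an orientation of a Dynkin diagram is \emph{not} manifestly preserved under mutation: a single step $\mu_k$ can produce multiple arrows between some pair of vertices or alter the shape of the graph. The statement only asserts that \emph{some} seed in the mutation class has this property, which is why the proof must pass through the intrinsic, mutation-invariant inequality $|b_{ij}b_{ji}|\le 3$ supplied by Proposition \ref{FiniteType-Cartan} rather than trying to track the shape of $Q$ directly along a sequence of mutations. Once one accepts that proposition, the remaining work is the elementary translation into the skew-symmetric setting and the appeal to the classical classification of positive-definite Cartan matrices, both of which are routine.
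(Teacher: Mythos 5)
The first thing to say is that the paper does not actually prove this statement: Theorem \ref{FiniteType-Quiver} is quoted from \cite{Rei} and \cite{FZ5} with no argument given, so there is no in-paper proof to compare against. Your proposal is a derivation from Proposition \ref{FiniteType-Cartan} (itself only cited), and while the quiver/skew-symmetric-matrix dictionary and the observation that quiver mutation matches matrix mutation are correct, two steps have genuine gaps.

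First, the $2$-finiteness criterion $|b_{ij}b_{ji}|\le 3$ characterizes finite type only when imposed on \emph{every} exchange matrix in the mutation class, not at a single seed. Your converse direction verifies $|b_{ij}|\le 1$ only at the initial seed, which is not sufficient: any acyclic orientation of an affine simply-laced diagram such as $\widetilde{D}_n$ satisfies $|b_{ij}|\le 1$ at that seed yet generates an infinite-type cluster algebra. Second, in the forward direction you pass from $|b_{ij}|\le 1$ to ``the associated Cartan matrix is positive definite'' by invoking the positive-definiteness clause of Proposition \ref{FiniteType-Cartan}; but the Cartan counterpart $a_{ij}=-|b_{ij}|$ is not mutation-invariant and need not be positive definite at an arbitrary seed of a finite-type algebra --- already in type $A_3$ there is a seed whose quiver is an oriented $3$-cycle, whose Cartan counterpart is of affine type $\widetilde{A}_2$. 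What you actually need is the other half of Proposition \ref{FiniteType-Cartan}: finite type guarantees that \emph{some} seed carries the bipartite exchange matrix built from a finite-type Cartan matrix $A$; skew-symmetry of $B(Q)$ forces $A$ to be symmetric, hence of type $ADE$, and the quiver at that distinguished seed is then the alternating orientation of the Dynkin diagram, which is exactly the conclusion. The same observation repairs the converse, provided you add the standard fact that all orientations of a tree are mutation-equivalent via sink--source mutations, so that an arbitrary orientation of an $ADE$ diagram can be moved to the bipartite seed covered by the proposition.
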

We shall only deal with quivers of finite mutation types. The theorem makes the connection with the Coxeter diagrams more explicit. In the next section, we will present the notation of weighted quivers' notation by assigning a weight to the vertices. The main essay will be how the weights are affected by mutations. 
\begin{remark} 
A major application of quivers is in representation theory of algebras. They provide a combinatorial framework to explain representations in algebra. A representation of a quiver is an assignment of $\mathbb{C}$-vector space to each vertex and a linear map to an oriented edge. A quiver is of finite type if it has only a finite number of indecomposable representations.
\end{remark}
Our primary motivation from the theory of cluster algebras is that they appear as the coordinate rings of homogeneous spaces. We mention several results about this to make the idea more concrete.
\begin{theorem} \cite{GLS}
The coordinate ring of any partial flag variety $SL_n\mathbb{C}/P$ has a natural cluster algebra structure.
\end{theorem}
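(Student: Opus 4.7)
The plan is to exhibit an initial seed on $\mathbb{C}[SL_n\mathbb{C}/P]$ whose mutation-equivalence class generates the full coordinate ring. Since $P$ corresponds to a subset $J \subset \Pi$ of simple roots, I would first embed $SL_n\mathbb{C}/P$ into the product of Grassmannians $\prod_{\alpha \in \Pi \setminus J} \mathrm{Gr}(k_\alpha, n)$ via the relevant fundamental representations. Under this embedding, $\mathbb{C}[SL_n\mathbb{C}/P]$ becomes the multi-graded homogeneous coordinate ring generated by the Plücker coordinates $\Delta_I$ for index sets $I \subset \{1, \dots, n\}$ of the appropriate sizes, modulo the generalized Plücker relations. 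This identifies the candidate generators, which is a prerequisite for matching them with cluster variables.

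Next I would construct an initial seed. The most concrete route is via a reduced expression $\underline{w}_0 = s_{i_1} \cdots s_{i_N}$ for the longest element of $W/W_L$: following the Berenstein--Fomin--Zelevinsky chamber ansatz, to each letter of $\underline{w}_0$ one attaches a generalized minor $\Delta_{u_k \omega_{i_k}, \omega_{i_k}}$ and reads off an exchange quiver from the associated double wiring diagram. For Grassmannian factors these are Postnikov's plabic graphs, whose faces are labeled by Plücker indices and whose square moves correspond to mutations. One declares the minors attached to the rightmost occurrence of each letter to be frozen; the remaining minors constitute the initial cluster $\mathbf{X}$ and produce the exchange matrix $\hat{B}$ as in the definitions of Section 5.

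The exchange relations are then verified by matching the short three-term Plücker relations of the form $\Delta_{Iac}\Delta_{Ibd} = \Delta_{Iab}\Delta_{Icd} + \Delta_{Iad}\Delta_{Ibc}$ with the mutation formula at the corresponding vertex; this is a combinatorial check on the faces of the wiring or plabic diagram and gives one inclusion, namely that the cluster subalgebra $\mathcal{A}(\mathbf{X}, \hat{B})$ sits inside $\mathbb{C}[SL_n\mathbb{C}/P]$. To close the proof I would invoke the Geiss--Leclerc--Schröer categorification: the subcategory $\mathrm{Sub}(Q_J)$ of finite-dimensional modules over the preprojective algebra $\Lambda$ of type $A_{n-1}$ that are subobjects of an injective $Q_J$ determined by $J$ is a Frobenius category whose cluster-tilting objects categorify the clusters, and whose dual semicanonical basis map identifies $\mathcal{A}(\mathbf{X},\hat{B})$ with the full multi-homogeneous coordinate ring of $SL_n\mathbb{C}/P$.

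The main obstacle is precisely this last identification: showing that the cluster algebra generated by the initial seed exhausts $\mathbb{C}[SL_n\mathbb{C}/P]$ rather than landing in a proper subalgebra. A naive generators-and-relations argument fails because the higher Plücker relations are not themselves mutation relations; one really needs either the preprojective-algebra categorification, or a delicate combination of the Laurent phenomenon with a transcendence-degree count plus an explicit description of mutation paths reaching every Plücker coordinate. Either route is substantial, and making it rigorous is essentially the content of the cited GLS theorem.
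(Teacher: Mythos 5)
The paper does not prove this statement: it is imported verbatim from Geiss--Leclerc--Schr\"oer as a citation, so there is no internal proof to compare your attempt against. Judged on its own, your outline is a faithful reconstruction of how the result is actually established in the literature: embedding into a product of Grassmannians to fix the generators, an initial seed of generalized minors read off a reduced word (or, for Grassmannian factors, a plabic graph), verification that three-term Pl\"ucker relations are exchange relations to get the containment $\mathcal{A}(\mathbf{X},\hat{B}) \subseteq \mathbb{C}[SL_n\mathbb{C}/P]$, and the preprojective-algebra categorification via $\mathrm{Sub}(Q_J)$ to upgrade containment to equality. You also correctly isolate the genuine difficulty --- that the higher Pl\"ucker relations are not mutation relations, so exhausting the coordinate ring is the hard direction --- which is exactly why the elementary seed-and-mutate argument alone does not suffice. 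The one caveat is that your final step literally invokes the GLS categorification, i.e.\ the content of the theorem being proved, so as a standalone argument the proposal is a roadmap rather than a proof; but since the paper itself treats the statement as a black box, your sketch is, if anything, more informative than what the paper provides.
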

\begin{example}
Consider the Grassmanninan $Gr(k,n)$, where have the Pl\"ucker embeddings 
\begin{equation} 
\begin{aligned}
&Gr(k,n) \hookrightarrow \ \mathbb{P}(\bigwedge^k \mathbb{C}^n)\\ 
&pt=(v_1,...,v_k) \longmapsto v_1 \wedge ... \wedge v_k, \qquad (v_i \in \mathbb{C}^n).
\end{aligned}
\end{equation} 
By setting $I=\{1 \leq i_1 \leq i_2 \leq ... \leq i_k \leq n \}$ the $I$-th Pl\"ucker coordinate is given by 
\begin{equation}
\Delta_I=\det({v_i}_1,...,{v_i}_k)    
\end{equation}
the Pl\"ucker coordinates generate $\mathbb{C}[Gr(n,k)]$. One defines the $(k,n)$-diagrams by putting $n$ marked points $1, 2, 3,... , n$ clockwise on the boundary of a disc and oriented strands  
\begin{equation} 
S_i:\ i \longmapsto i+k \mod n, \qquad (1 \leq i \leq n)
\end{equation} 
with no oriented lenses, self or alternate crossings, nor triple crossing on a point [see \cite{BKM} for details]. We obtain several alternating regions whose boundaries are oriented clockwise. Denote by $i_1...i_s$ the region according to the boundary strands. The dual graph to a $(k,n)$-diagram is the cluster complex, [cf. \cite{BKM}].
\end{example}
The following theorem was proved independently by J. Scott.
\begin{theorem} \cite{Sc}
The homogeneous coordinate ring of any Grassmannian $Gr(k,n)(\mathbb{C})$ has a natural cluster algebra structure. More precisely, let $S$ be a $(k,n)$-diagram, and 
\begin{equation} 
X(S)=\{\Delta_{I(R)}\ | \ R \ \text{alternating region of} \ S\ \},
\end{equation}
then $X(S)$ is a cluster, $\mathbb{C}[Gr(n,k)]=\mathbb{C}[ X(S)]/I$. The exchange relations are generalized Pl\"ucker relations, that is $I$ is given by the Pl\"ucker relations.
\end{theorem}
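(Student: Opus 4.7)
The plan is to establish the three claims—(i) $X(S)$ is a cluster in $\mathbb{C}[Gr(k,n)]$, (ii) these Plücker coordinates together with the frozen boundary coordinates generate the homogeneous coordinate ring, and (iii) the cluster mutations at alternating regions are realized by three-term Plücker relations—by bootstrapping from the cluster algebra structure on $\mathbb{C}[Gr(k,n)]$ given in the preceding theorem, and matching combinatorial data on the $(k,n)$-diagram with algebraic data of seeds.

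First I would fix an initial $(k,n)$-diagram $S_0$ whose alternating regions are labeled by the Plücker indices appearing in the standard rectangles tableau, i.e.\ the classical ``frozen'' boundary regions plus the $k(n-k)$ mutable regions. Associated to $S_0$ there is a quiver $Q(S_0)$: vertices are alternating regions, and arrows are drawn across each internal crossing according to the orientation rule. I would verify directly from the local picture that $Q(S_0)$ coincides with the Gekhtman–Shapiro–Vainshtein quiver (or equivalently the Fomin–Pylyavskyy wiring-diagram quiver) known to define the cluster structure on $\mathbb{C}[Gr(k,n)]$; this establishes (i) for one diagram, and hence for every diagram obtained from it by sequences of local moves.

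Second I would import from the preceding theorem the identification of the cluster variables with a set of generators of $\mathbb{C}[Gr(k,n)]$ including all Plücker coordinates $\Delta_I$. Because the Plücker coordinates already generate the homogeneous coordinate ring, and because $X(S)$ contains the frozen cyclic-interval Plücker coordinates plus enough mutable Plücker coordinates to form a seed, it follows by the Laurent Phenomenon that every other $\Delta_J$ is a Laurent polynomial in $X(S)$; clearing denominators shows $\mathbb{C}[X(S)]$ surjects onto $\mathbb{C}[Gr(k,n)]$ with kernel $I$. For (iii), I would analyze a single mutation $\mu_R$ at an alternating region $R$: the ``flip'' corresponds to a local geometric exchange in the $(k,n)$-diagram which replaces $\Delta_{I(R)}$ by $\Delta_{I(R')}$, and the boundary regions of $R$ give exactly four Plücker indices forming a three-term Plücker relation, yielding the identity
\begin{equation}
\Delta_{I(R)}\,\Delta_{I(R')} = \Delta_{A}\,\Delta_{B} + \Delta_{C}\,\Delta_{D},
\end{equation}
which is the cluster exchange relation at $R$. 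Iterating across all seeds shows the full ideal of relations is generated by Plücker relations.

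The main obstacle is the combinatorial-to-algebraic dictionary at the mutation step: one must show that for every alternating region $R$, the six indices appearing in the exchange relation match, as unordered multisets of cyclic intervals, a genuine three-term Plücker relation, and that no extraneous relations appear. I would handle this by showing that two-step paths in the strand configuration through $R$ encode precisely the complementary $(k{-}1)$- and $(k{+}1)$-subsets that control the short Plücker relations, and then invoke the fact that short Plücker relations generate the Plücker ideal to conclude that no further relations need be added.
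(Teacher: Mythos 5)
The paper does not prove this statement at all: it is quoted verbatim from Scott \cite{Sc} as background for Section 5, so there is no internal argument to compare yours against. Judged on its own terms, your outline follows the broad strategy of Scott's actual proof (exhibit an initial diagram whose quiver matches a known seed, show local moves on diagrams realize mutations, identify geometric exchanges with short Pl\"ucker relations), but it has a genuine gap at the generation step. The assertion that ``clearing denominators shows $\mathbb{C}[X(S)]$ surjects onto $\mathbb{C}[Gr(k,n)]$'' does not work: the Laurent phenomenon produces Laurent polynomials with nontrivial monomial denominators, and these do not become polynomial expressions in $X(S)$. Already for $Gr(2,4)$ the seed $X(S)=\{\Delta_{13},\Delta_{12},\Delta_{23},\Delta_{34},\Delta_{14}\}$ does not generate the coordinate ring as a polynomial-ring quotient, since $\Delta_{24}=(\Delta_{12}\Delta_{34}+\Delta_{14}\Delta_{23})/\Delta_{13}$ is not polynomial in $X(S)$. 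The correct reading of the theorem is that the cluster algebra generated by \emph{all} cluster variables obtained by mutation from $X(S)$ equals $\mathbb{C}[Gr(k,n)]$, and the substantive work is (a) showing every Pl\"ucker coordinate actually occurs as a cluster variable, i.e.\ as a region label of some $(k,n)$-diagram reachable by local moves, and (b) showing the cluster algebra is neither smaller nor larger than the coordinate ring, which in Scott's argument goes through the Berenstein--Fomin--Zelevinsky cluster structure on double Bruhat cells and an upper-bound comparison, not through a single-seed surjectivity.

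A second, related problem: your appeal to the Laurent phenomenon for ``every other $\Delta_J$'' presupposes that every $\Delta_J$ is a cluster variable, which is precisely part of what must be proved; and your final step invokes ``short Pl\"ucker relations generate the Pl\"ucker ideal'' to conclude the description of $I$, which either assumes the classical Pl\"ucker presentation (in which case the statement about $I$ has no cluster-theoretic content left to verify) or asserts a nontrivial commutative-algebra fact that needs its own justification. The cluster-theoretic content of the last sentence of the theorem is only that the exchange relations attached to geometric exchanges at quadrilateral alternating regions are three-term Pl\"ucker relations; that local verification is the part of your sketch that is on target.
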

More specifically, the clusters for the corresponding cluster algebra can be explicitly described by the alternating regions in a $(k,n)$-diagram as was explained in the example. Moreover, the exchange relations of the cluster algebra are given by the so-called generalized minors, wherein this case is called generalized Pl\"ucker relations.
\subsection{Generalized minors}
The last ingredient we recall from the theory of cluster algebras is the  generalized minors $\Delta_{\dot{u}w_j,\dot{v}w_j}$. They provide a framework to express generalized Pl\"ucker relations. They are defined as follows, \cite{FZ1, FZ5}. Let's write 
\begin{equation}
G_0=U_-TU, \qquad x=[x]_-[x]_0][x]_+ \ \ \text(unique)
\end{equation} 
($T$ maximal torus and $U$ unipotent) where $G_0$ the open subset of $G$ having this decomposition. Define the minor $\Delta_{uw_i,vw_i}$ as the regular function on $G$ s.t. whose restriction on $\dot{u}G_0\dot{v}^{-1}$ is given by
\begin{equation}
\Delta_{uw_i,vw_i} (x)=([\dot{u}^{-1}x\dot{v}]_0)^{w_i}, \qquad u,v \in W
\end{equation}
where $w_i$ is a fundamental weight and the exponent by a weight means the multiplicative character on the maximal torus $T$ given by \eqref{eq:Weyl-action}. These functions appear as the cluster variables in the construction of the coordinate ring of flag varieties, cf. \cite{FZ5, FZ1}. The generalized minors are well-defined nowhere vanishing functions on the double Bruhat cells. According to the Theorem \ref{th:theorem32} write
\begin{equation}
G=\coprod_{u}BuB =\coprod_vB_-vB_-.
\end{equation}
The double Bruhat cells are defined by 
\begin{equation}
G^{u,v}=BuB \cap B_-vB_-    .
\end{equation}
As a variety, $G^{u,v}$ is biregularly isomorphic to a Zariski open subset of an affine space, \cite{FZ1, BFZ}. The variety $G^{u,v}$ can be characterized by certain vanishing conditions of the generalized minors, [\cite{FZ5}, proposition 2.8]. The generalized minors also characterize the double Bruhat cells. That is, the double Bruhat cells are precisely defined by the nonvanishing of certain minors. They are naturally constructed by the mutations initiated from an original seed variable. In particular, they are included in the cluster variables. 
%
%
%%%%%%%%%%%%%%%%%%%%%%%%%%%%%
%%%%%%%%%%% S E C T I O N %%%
%%%%%%%%%%%%%%%%%%%%%%%%%%%%%
\section{The coordinate ring of $\frac{G}{P}[\psi_{H}]$ as a weighted cluster algebra} \label{sec:CoordRing-WHS}
In section \ref{sec:Prel-ClusterAlg}, the coordinate ring of a Homogeneous space $G/P$ was studied as a cluster algebra. In this section, we will study the coordinate ring of weighted homogeneous space i.e. $\mathbb{C}\left [\frac{G}{P} \left [\psi_H \right ]\right ]$. We shall prove that this coordinate ring is a weighted cluster algebra. In this case, the cluster algebra is of finite type, i.e., its exchange matrix is obtained from a Cartan matrice. We use a notion of weighted quivers studied by \cite{OT} to explain the coordinate ring of $\frac{G}{P}[\psi_{H}]$.

The homogeneous space $G/P$ can be embedded as a closed subset in the product $\prod_{j \in J} \mathbb{P}(L(\omega_j)^{\vee})$ where $\omega_j$ are fundamental weights of $G$ and $J=\Pi \setminus I$ is as explained in Section \ref{sec:HomogSp} [\cite{LG}, p.123]. This is the Pl\"ucker embedding. Thus $\mathbb{C}[G/P]$ is the multi-homogeneous coordinate ring, coming from this embedding. Let $\Pi_J$ denote the monoid of
dominant integral weights of the form $\lambda = \sum_{j \in J} a_j \omega_j
, (a_j \in \mathbb{N})$. Then, $\mathbb{C}[G/P]$ is a $\Pi_J$ -graded ring with a natural $G$-module structure. The homogeneous component with multi-degree $\lambda \in \Pi_J$ is an irreducible $G$-module with highest weight $\lambda$. In other words, we have $\mathbb{C}[G/P]=\bigoplus_{\lambda}L(\lambda)$.
Moreover, $\mathbb{C}[G/P]$ is generated by its subspace $\oplus_{j \in J}L(\omega_j)$. We generalize this argument in the following theorem.
\begin{theorem} [Main Result] \label{Thm:WeightedGradedRing}
The coordinate ring of $\frac{G}{P} \left [\psi_H \right ]$ is a weighted graded homogeneous ring. The field extension $\mathbb{C}(G/P) \big /   \mathbb{C}\left (\frac{G}{P} \left [\psi_H \right ]\right )$ has degree $ [\times_{\alpha \in R_+}\langle \alpha, H \rangle]$. 
\end{theorem}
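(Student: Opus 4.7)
The plan is to leverage Theorem \ref{th:theorem32}, which identifies $\frac{G}{P}[\psi_H]$ with the global quotient $[\times_{\alpha\in R(P)_+}\mathbb{Z}_{\langle\alpha,H\rangle}]\setminus G/P$, and the associated isomorphism of coordinate rings
\begin{equation*}
\mathbb{C}\Bigl[\tfrac{G}{P}[\psi_H]\Bigr]\;\xrightarrow{\ \cong\ }\;\mathbb{C}\Bigl[\tfrac{G}{P}\Bigr]^{\times_{\alpha}\mu_{\langle\alpha,H\rangle}},
\end{equation*}
obtained in the proof of that theorem. So both claims reduce to statements about a finite abelian group action on $\mathbb{C}[G/P]$.

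For the first claim, I would introduce on $\mathbb{C}[G/P]$ the multigrading in which the Bruhat-cell coordinate $x_\alpha$ has degree $\langle\alpha,H\rangle$ in the $\alpha$-factor and $0$ elsewhere. One first checks that this assignment is compatible with the transition maps between Bruhat cells, which are polynomial maps built from the cross action of the Weyl group (see \eqref{eq:thxalpha_dos}); since the torus action of $T$ twists precisely by $\langle\alpha,H\rangle$ on $x_\alpha$ in every chart, the grading is well-defined globally on $\mathrm{Cone}(G/P)\setminus 0$. The subring of elements whose $\alpha$-multidegree is divisible by $\langle\alpha,H\rangle$ for each $\alpha\in R(P)_+$ is exactly the fixed ring under the action $x_\alpha\mapsto \zeta_\alpha x_\alpha$, $\zeta_\alpha^{\langle\alpha,H\rangle}=1$. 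Collapsing the multigrading to a single $\mathbb{Z}_{\geq 0}$-grading by the sum $\sum_\alpha d_\alpha\langle\alpha,H\rangle$ exhibits $\mathbb{C}[\frac{G}{P}[\psi_H]]$ as a weighted graded ring in the sense of a WPS-style Proj construction, as in Remark \ref{WPS-Triangle}.

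For the degree of the field extension, I would pass to function fields: since the quotient map $\pi_H$ is locally a finite quotient by the group scheme $\mu=\times_{\alpha\in R(P)_+}\mu_{\langle\alpha,H\rangle}$, one has $\mathbb{C}\bigl(\frac{G}{P}[\psi_H]\bigr)=\mathbb{C}(G/P)^{\mu}$. The action of $\mu$ on $\mathbb{C}(G/P)$ is faithful because on the open Bruhat cell $U(P)\cong\mathbb{C}^{|R(P)_+|}$ each factor $\mu_{\langle\alpha,H\rangle}$ acts nontrivially on a distinct coordinate $x_\alpha$. By Artin's theorem on fixed fields of finite group actions,
\begin{equation*}
\bigl[\mathbb{C}(G/P)\,:\,\mathbb{C}(G/P)^{\mu}\bigr]\;=\;|\mu|\;=\;\prod_{\alpha\in R(P)_+}\langle\alpha,H\rangle,
\end{equation*}
which is the stated degree.

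The main obstacle is the global well-definedness of the grading (equivalently, of the $\mu$-action) across Bruhat charts: the transition functions are given by the Weyl-group cross action, and one must verify that the twisted torus weights in \eqref{eq:thxalpha_dos} glue to a consistent $\mu$-equivariant structure on all of $G/P$, not just on $U(P)$. Once this compatibility is in hand, both the grading statement and the field-degree computation follow in a routine way from the finite-quotient description of Theorem \ref{th:theorem32}.
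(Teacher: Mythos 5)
Your argument is correct in outline but takes a genuinely different route from the paper's. The paper never invokes the finite-quotient description of Theorem \ref{th:theorem32} here: instead it uses the embedding $\frac{G}{P}[\psi_H]\hookrightarrow\prod_{j\in J}\mathbb{P}\left(L(\langle\alpha_j,H\rangle w_j)\right)$ into a product of weighted projective spaces, which exhibits $\mathbb{C}\left[\frac{G}{P}[\psi_H]\right]$ as $\bigoplus_{\lambda}L(\lambda)$ graded by the monoid of dominant weights $\lambda=\sum_J a_j\lambda_j w_j$, so the weighted graded structure is read off representation-theoretically with generators in $\bigoplus_{j\in J}L(\langle\alpha_j,H\rangle w_j)$; the field degree is then computed as the index of the subalgebra generated by the $L(\langle\alpha_j,H\rangle w_j)$ inside the one generated by the $L(w_j)$. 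Your route --- fixed-field identification $\mathbb{C}\bigl(\frac{G}{P}[\psi_H]\bigr)=\mathbb{C}(G/P)^{\mu}$ plus Artin's theorem, with faithfulness checked on the open Bruhat cell --- is arguably the cleaner argument for the degree claim, since the paper's index computation is left quite informal; but it leans entirely on Theorem \ref{th:theorem32}, whereas the paper's representation-theoretic grading is intrinsic and global. Conversely, for the first claim your chart-by-chart multigrading on the $x_\alpha$ is the weaker part: the $x_\alpha$ are affine coordinates on a Bruhat cell, not homogeneous coordinates on the cone, so promoting your multidegree to a grading of the homogeneous coordinate ring requires precisely the gluing verification you yourself flag as the main obstacle, and that step is exactly what the paper's weight-monoid grading sidesteps. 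One further point worth recording: the theorem as stated takes the product over $R_+$, but both your computation and the paper's own proof produce $\prod_{\alpha\in R(P)_+}\langle\alpha,H\rangle$; these coincide only when $P=B$.
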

\begin{proof} 
In the weighted case, we have the embedding in the product of weighted projective spaces as  
\begin{equation} \label{eq:embedding}
\frac{G}{P}\big [\psi_{H_1} \big] \varlonghookrightarrow \prod_{j \in J} \mathbb{P}\left (L(\langle \alpha_j,H \rangle w_j)\right )    
\end{equation}
where $L(\langle \alpha_j,H \rangle w_j)$ is the highest weight representation of $G$ of the highest weight $\langle \alpha_j,H \rangle w_j$, and $w_j$ are fundamental weights. Therefore, $\mathbb{C}\left [\frac{G}{P}[\psi_{H_1}]\right ]$ is graded by $\Pi_J[\psi_H]=N^J[\psi_H]$ the monoid of integral dominant weights of the form $\lambda=\sum_Ja_j\lambda_jw_j$. Therefore,
\begin{equation}
\mathbb{C}\left [\frac{G}{P}\big [\psi_{H} \big ]\right ]\  =\ \bigoplus_{\lambda \in \Pi_J[\psi_{H}]}L(\lambda).    
\end{equation}
Thus, it is generated by 
\begin{equation} 
\bigoplus_{j \in J}L(\langle \alpha_j,H\rangle w_j)
\end{equation} 
where $w_j$ are the fundamental weights. We deduce that the degree of the extension is equal to
\begin{align*}
\left [ \mathbb{C}(G/P) :  \mathbb{C}\left (\frac{G}{P} \big [\psi_H \big ] \right ) \right ] &=\left | \dfrac{\langle \bigoplus_{j \in J}L( w_j) \rangle }{\langle \bigoplus_{j \in J}L(\langle \alpha_j,H\rangle w_j) \rangle }\right | \\
&=  \prod_{\alpha \in R(P)_+}\langle \alpha, H  \rangle   .
\end{align*}
\end{proof}
The embedding above in \eqref{eq:embedding} may be compared with the simple embedding used by Abe et al.-Corti-Reid in \cite{AbM1, AbM2, CR, AKQ} where they add a positive integer $u \gg 0$ to the weights on all the coordinates. However, we have to stress that, in this case, the WHS is instead a different object. Thus we get a rather different embedding. 

In the previous section, we presented the method to define a cluster algebra of finite type from the mutation flips of the Cartan matrix of a Dynkin diagram. One may expect naturally if the same method can be applied when a system of positive weights is assigned to the nodes of the Dynkin diagram. The association of the weights is defined by a weight function $\psi_H$ for $H$ in the highest Weyl chamber. The question would be how the mutations should be applied to the weights. We may consider this problem more generally on quivers. Thus, we need a weighted version of the definition of a quiver. We do this for quivers of finite type. The combinatorial nature of the procedure suggests that; if a weight function could be defined on the vertex set of a quiver, one may be able to compute its deformation via the mutations of the quiver.   
\begin{definition} \cite{OT} \label{def:weighted-Quiver}
A weighted quiver $(Q,w)$ is a quiver $Q$ with a weight function $w  :Q  \longrightarrow \mathbb{Z}$. The mutation $\mu_k$ at the vertex $x_k$ changes the weights at the vertices as follows,
\begin{itemize}
    \item For each arrow $x_i \to x_k$ change the weight 
    \begin{equation} \label{eq:property1}
    \mu_k:w_i \longmapsto w_i+w_k 
    \end{equation} 
    that is 
    \begin{equation} \label{eq:property2}
        \mu_k(w)(x_i)=w_i+[b_{ik}]_+w_k, \ [b_{ik}]_+=\max(b_{ik},0)
    \end{equation}
    \item Reverse the sign at vertex $x_k$, i.e. \begin{equation} \label{eq:property3}
    \mu_k(w)(x_k)=-w_k.
    \end{equation}
\end{itemize}
The exchange relations are defined as follows. Label the vertices by the variables $z_i=x_i+y_i \epsilon$, where $x_i,y_i$ are usual variables and $\epsilon^2=0$. Then write the exchange relation as at $k$ by
\begin{equation} \label{eq:WeightedExchangeRelation}
zz'=\prod_{z_k \to z_j}z_j + \prod_{z_i \to z_k} z_i
\end{equation}
and the other variables remain unchanged.
\end{definition}
In weighted quivers, we assume that the vertices are labeled by $z_i=x_i+y_i\epsilon$ where $\epsilon$ is a formal variable with $\epsilon^2=0$. The variables $y_i$ are called odd variables. One of the ways that weighted quivers are used are in clusters superalgebras, \cite{Ov}. As it is shown in \cite{Ov}, the variables appearing by mutation at all possible mutable vertices are Laurent polynomials in the initial variables. The Laurent phenomenon implies that the denominators are monomials in the variables $x_i$ while the variable $y_i$ appear linearly in the numerators, \cite{OT}, \cite{OS}. All the cluster variables and their iterating mutations in a mutation class $S$ generate the cluster algebra. We call this algebra a weighted cluster algebra associated with the weighted quiver, $Q[w]$ denoted $A(Q)[w]$. We provide some examples to make the definition more understandable.
\begin{example} \cite{OT} 
We describe periodic quivers with periodic weight functions, where after a finite sequence of mutations, we get back to the original quiver with the same weight function.
\begin{itemize} 
\item One can label the vertices of a triangle or a square with weights to be 0, 1, -1, and orient the edges such that they give period 1 quivers with period 1 weight function. 
\item The primitive quiver $P_N^{(t)}$, $1 \leq t \leq N/2$ is the quiver with $n$ vertex and $n$ arrows such that the vertex $x_i$ is joined to the vertex $x_{i+t}, \mod(N)$ where the indices are from $1,...,N$. The arrow is from higher index to the lower one. The exchange matrix is given by
\begin{equation}
b_{ij}=\begin{cases}-1 \qquad j-i=t, \\ \ 1 \qquad \ \ i-j=t\\ \ 0 \qquad \ \  \text{else} \end{cases} .    
\end{equation}    
One simply finds a weight function on $P_N^{(t)}$ to be periodic of period 1, [see the ref. \cite{OT} for details and a classification].
\end{itemize}
\end{example}

In order to express our main result, we first prove some lemmas that connect the discussion of the weight system $\psi_H$ in Section \ref{sec:WeightedHomogSp} to that of cluster algebras and the Definition \ref{def:weighted-Quiver}. %
\begin{lemma} \label{lemma2}
Let $\frac{G}{P}[\psi_H]$ be a WHS defined by the weight system $\psi_H: \Pi \to \mathbb{Z}_+$ via the element $H \in C_+$ in the highest Weyl chamber. Consider $\psi_H$ as a weight function on the quiver $Q$ obtained by choosing an arbitrary orientation of the Dynkin diagram of $G$. Then, the mutations exchange the $\psi_H$ according to its transform under change of coordinates explained in \eqref{eq:Weyl-action}. 
\end{lemma}
\begin{proof}
The only things to be checked are if the properties \eqref{eq:property1}, \eqref{eq:property2} and  \eqref{eq:property3} hold when we do the change of coordinate on charts. We have to note that the change of coordinates from a chart that $x_{\alpha} \ne 0$ to another where $x_{\beta} \ne 0$, is given by multiplication $ \times \frac{x_{\beta}}{x_{\alpha}}$ analogous to the WPS case. However, the scalar will act on the variables according to their weights. The $T$-action gets twisted as $t._H\ x_{\alpha}^w=\ t^{w( \alpha )} \ x_{\alpha}^w, \ \alpha \in R_+, \ x_{\alpha}^w \in w.U$ where $x_{\alpha}^w$ states the coordinate $x_{\alpha}$ in $w U$. Because the Weyl group is generated by fundamental reflections, it suffices we check the claim of the lemma just for the application of fundamental reflections $r_i$ corresponding to simple roots. In this case $w(\alpha)=r_i(\alpha)=\alpha-\langle \alpha , \alpha_i^{\vee} \rangle \alpha_i$. Applying the both sides to, $H \in C_+$ we see that the properties \eqref{eq:property1}, \eqref{eq:property2} and \eqref{eq:property3} hold for the case of fundamental reflections, which finish the proof. 
\end{proof}
The following proposition encounters a new series of examples for weighted cluster algebras by putting all the tools we introduced above. In the following proposition, we have assumed that coordinate rings of the homogeneous space $G/P$ is a cluster algebra, and based on that, we explain that the coordinate rings of a WHS are a weighted cluster algebra defined by a weighted quiver, \ref{def:weighted-Quiver}. 
\begin{prop}[Main Result]
The coordinate ring $\mathbb{C}\left [\frac{G}{P}[\psi_{H}]\right ]$ is a weighted cluster algebra of finite type. Besides, it can be characterized by the cluster algebra of the weighted quiver obtained from an orientation of the Dynkin diagram of $G$ and the weight function $w=\psi_H$, together with the mutations defined as in Definition \ref{def:weighted-Quiver}.
\end{prop}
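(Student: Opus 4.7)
The plan is to transport the classical cluster algebra structure on $\mathbb{C}[G/P]$ to a weighted cluster structure on $\mathbb{C}\!\left[\frac{G}{P}[\psi_H]\right]$ through the finite quotient presentation of Theorem \ref{th:theorem32}, and then equip the resulting seeds with the weighting induced by the dominant integral weight $\psi_H$. First I would invoke the theorem of Geiss--Leclerc--Schr\"oer recalled above, together with Proposition \ref{FiniteType-Cartan} (or Gabriel's Theorem \ref{FiniteType-Quiver}), to produce on $\mathbb{C}[G/P]$ a seed whose exchange matrix is of finite type, i.e.\ whose quiver is an orientation of the Dynkin diagram of $G$. In that seed the cluster variables $\{x_\alpha\}$ are canonically indexed by $\alpha\in R_{\geq -1}$, and the generalized minors $\Delta_{\dot u w_j,\dot v w_j}$ are the corresponding cluster and frozen variables.

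Next I would use Theorem \ref{th:theorem32} to identify $\mathbb{C}\!\left[\frac{G}{P}[\psi_H]\right]\cong \mathbb{C}[G/P]^{\mu}$ with $\mu=\times_{\alpha\in R(P)_+}\mu_{\langle\alpha,H\rangle}$, the action being by the character $x_\alpha\mapsto x_\alpha\cdot\overline{Y_\alpha}$. Combined with the weighted graded decomposition
\begin{equation}
\mathbb{C}\!\left[\frac{G}{P}[\psi_H]\right] \;=\; \bigoplus_{\lambda\in \Pi_J[\psi_H]} L(\lambda)
\end{equation}
of Theorem \ref{Thm:WeightedGradedRing}, this realizes the coordinate ring as a multigraded subring in which each generator $x_\alpha$ carries the weight $w(x_\alpha)=\langle\alpha,H\rangle=\psi_H(\alpha)$ on the simple roots, extended linearly to all roots by the $T$-action \eqref{eq:thxalpha}.

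The third step is to check that the cluster exchange relations on $\mathbb{C}[G/P]$ descend to the weighted exchange relation \eqref{eq:WeightedExchangeRelation} of Definition \ref{def:weighted-Quiver}. Replacing each variable $x_i$ by the formal $z_i=x_i+y_i\epsilon$ (with $\epsilon^2=0$), the mutation $x_kx_k'=\prod_{b_{ik}>0}x_i^{b_{ik}}+\prod_{b_{ik}<0}x_i^{-b_{ik}}$ produces new $\mu$-weighted variables whose total $T$-weight is forced by the weighted grading to satisfy $\mu_k(w)(x_i)=w_i+[b_{ik}]_+w_k$ and $\mu_k(w)(x_k)=-w_k$, matching the update rule of Definition \ref{def:weighted-Quiver}; the Laurent phenomenon of \cite{Ov, OT} guarantees that iterated mutations stay inside the invariant subring $\mathbb{C}[G/P]^{\mu}$. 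Since the non-weighted cluster algebra has only finitely many seeds, so does the weighted one, so $\mathbb{C}\!\left[\frac{G}{P}[\psi_H]\right]$ is of finite type, and its initial weighted quiver is exactly an orientation of the Dynkin diagram of $G$ with vertex weights $\psi_H$.

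The main obstacle is the third step: one must verify that when the classical exchange relation is restricted to the invariants $\mathbb{C}[G/P]^{\mu}$ and rewritten through the odd-variable convention $z_i=x_i+y_i\epsilon$, both the denominator-and-numerator bookkeeping of the Laurent phenomenon and the vertex-weight update of Definition \ref{def:weighted-Quiver} are reproduced simultaneously. Concretely, one has to show that the $T$-weight of each monomial on the right-hand side of \eqref{eq:WeightedExchangeRelation} agrees with the prescribed $\mu_k(w)$, and that the relation is consistent across every sequence of mutations; this is what pins down the weighted quiver $(Q,\psi_H)$ as the canonical combinatorial datum attached to the weighted homogeneous space.
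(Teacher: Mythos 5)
Your proposal follows essentially the same route as the paper's own (explicitly labelled) sketch: both transport the finite-type cluster structure of $\mathbb{C}[G/P]$ (Geiss--Leclerc--Schr\"oer together with Proposition \ref{FiniteType-Cartan} and Theorem \ref{FiniteType-Quiver}) through the finite-quotient presentation of Theorem \ref{th:theorem32}, decorate the Dynkin-quiver seed with the weights $\psi_H(\alpha)=\langle\alpha,H\rangle$, and appeal to finiteness of seeds in the unweighted case to conclude finite type. The one real difference in emphasis is that the paper phrases the descent of the exchange relations through the generalized minors $\Delta_{\dot u w_i,\dot v w_i}$ and the multiplicativity $w(\Delta_{uw_i,vw_i}(x_\alpha))[\psi_H]=w(\Delta_{uw_i,vw_i}(x_\alpha))\times w(x_\alpha)$, treating the exchange relations as determinant identities among weighted minors, whereas you work with the invariant subring $\mathbb{C}[G/P]^{\mu}$ and the Laurent phenomenon of \cite{Ov,OT}; these are two framings of the same verification. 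The obstacle you single out in your third step --- that the $T$-weights of the two monomials in \eqref{eq:WeightedExchangeRelation} must agree with each other and with the update rule $\mu_k(w)(x_i)=w_i+[b_{ik}]_+w_k$, $\mu_k(w)(x_k)=-w_k$, coherently across all mutation sequences --- is precisely the point the paper also leaves unargued (it only asserts that the sign change ``does not conflict the weighted system''), so you have not introduced a gap beyond what is already present in the source; but be aware that neither your sketch nor the paper's actually closes it.
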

\begin{proof} (sketch)
That the coordinate ring of a WHS is a weighted graded homogeneous ring was proved in Theorem \ref{Thm:WeightedGradedRing}. It remains to check the cluster algebra conditions. By Definition \ref{def: WHS}, Theorem \ref{FiniteType-Cartan} and Theorem \ref{FiniteType-Quiver} the cluster algebra of $\mathbb{C}\left [\frac{G}{P}[\psi_{H}]\right ]$ is obtained in the same way as $\mathbb{C}\left [\frac{G}{P}\right ]$ constructed from the mutations on the Dynkin diagram of $G$, when the nodes of the diagram are assigned some weights. This shows that if the cluster algebra associated to $G/P$ is of finite type then that for the $\mathbb{C}\left [\frac{G}{P}[\psi_{H}]\right ]$ is also of finite type [this matter is independent of the weights according to definitions]. The generalized minors are defined on $\dot{u}G_0\dot{v}^{-1}$ by the same formula, $\Delta_{uw_i,vw_i} (x) = ([\dot{u}^{-1}x\dot{v}]_0)^{w_i}, \ u,v \in W$. The definition \ref{def:weighted-Quiver} shows that after the choice of weights at a seed the weight of the minors changes as the weights of the coordinates for the matrix $x_{\alpha}$ were introduced in \eqref{eq:torus-action}. The generalized minors appear as cluster variables, and the exchange relations of the mutations at any node are standard determinant identities. Thus, the coordinate rings of the double cosets $\mathbb{C}[G^{u,v}]$ are generated by the weighted generalized minors. So the Theorem is a consequence of the Theorem in the unweighted case and Lemma \ref{lemma2}.
\end{proof}
We give several examples of weighted homogeneous spaces and compute their coordinate rings as a weighted cluster algebra.
\begin{example}
\begin{itemize}
    \item[(1)] Let $\mathcal{A}=\mathbb{C}[SL_2 \mathbb{C}]$. It is a cluster algebra of rank 1. Setting the entries of a matrix by $x_{11}, x_{12}, x_{21},x_{22}$, the variables $x_{11}, x_{22}$ are cluster variables. The ring $\mathcal{A}$ is generated over $\mathbb{C}[x_{12}, x_{21}]$ by the cluster variables with the exchange relation 
    \begin{equation}
        x_{11}x_{22}-x_{12}x_{21}=1.
    \end{equation}
    Now consider the homogeneous space $X=SL_2/B^-$. We can choose the weights by assigning two positive integers $n > m$ to the torus variables $x_{11}, x_{22}$ respectively. The Dynkin diagram consists of a single point with the weight $n-m>0$. We have just one positive root $\alpha=\epsilon_1-\epsilon_2$ and one root coordinate  $x_{\alpha}$ with weight $2(n-m)$. The Weyl group is $S_2$. The coordinate $x_{\alpha}^w$ in the other Bruhat cell, for the nontrivial element $w \in W$, has the same weight as $x_{\alpha}$. We can also regard the Dynkin diagram as a trivial quiver, with weight $n-m$. The mutation changes its sign and back. 
    \item[(2)] Consider $X=SL_3/N$ and the Cluster algebra $\mathcal{A}=\mathbb{C}[SL_3/N]$, where $N$ is the maximal upper triangular unipotent subgroup. It is also a cluster algebra of rank 1. Denote the coordinates on $X$ by $x_1, x_2, x_3, x_{12}, x_{13}, x_{23}$. Then, we can choose $x_2, x_{13}$ as cluster variables and $\mathcal{A}$ is generated by $x_2, x_{13}$ over $\mathbb{C}[x_1, x_3, x_{12}, x_{23}]$ with the exchange relation
    \begin{equation}
    x_2x_{13}=x_1x_{23}+x_3x_{12}.
    \end{equation}
    We have two simple roots $\alpha_1, \alpha_2$ and 3 positive roots $\alpha_1, \alpha_2, \alpha_1+\alpha_2$. The weights can be chosen independently on the simple roots. We can illustrate this by choosing three positive weights $w_1> w_2> w_3> 0$
    on the variables $x_1, x_2, x_3$. Then the weights on the other coordinates are $w_1+w_2, w_2+w_3, w_3+w_1$. The above exchange relation still holds with the new weights. %
    \item[(3)] Consider $Gr(2,5)$, the Grassmannian of 2-planes in $\mathbb{C}^5$. The affine cone of $Gr(2,5)$ embedds in $\bigwedge^2 \mathbb{C}^5$. Thus the coordinates on $Cone(Gr(2,5))$ can be explained via the formal map
    \begin{eqnarray}
        \bigwedge^2 \begin{pmatrix}x_1 & x_2 & x_3 & x_4 &x_5\\ y_1& y_2& y_3& y_4& y_5 \end{pmatrix}= \\ 
        \begin{pmatrix} z_{11}& z_{13}& z_{14}& z_{15}\\  0 & z_{23}& z_{24}& z_{25} \\ 0 & 0 & z_{34}& z_{35} \\ 0& 0& 0& z_{45} \end{pmatrix} \nonumber
    \end{eqnarray} 
    where $z_{ij}= x_iy_j-x_jy_i$. The coordinate ring of the cone is generated by the variables $z_{ij}$ and the relations 
    \begin{equation}
        P_{ijkl}=z_{ij}z_{kl}-z_{ik}z_{jl}+z_{il}z_{jk}.
    \end{equation}
    If we assign the weights $w_i$ on $x_i$ and $y_i$, then the weight on $z_{ij}$ is $w_i+w_j$. The affine charts on $Gr(2,5)$ are given by $z_{ij} \ne 0$. On the weighted Grassmanninan the chart with $z_{ij} \ne 0$ is the quotient $\mathbb{C}^6/ \mu_{wt(z_{ij})}$. 
\end{itemize}
\end{example}
%
%%%%%%%%%%%%%%%%%%%%%%%%%%%%
%%%%%%%%%% S E C T I O N %%%
%%%%%%%%%%%%%%%%%%%%%%%%%%%%
\section{Conclusion}\label{sec:conc}
A new approach to define a weighted homogeneous space (WHS) is provided. Specifically, we state that a weighted homogeneous variety can be written as a whole compact quotient of a homogeneous space by a finite abelian group with respect to a particular action of the maximal torus. 

Furthermore, several known results on invariant K\"ahler differentials over a homogeneous space are extended to the WHS case. 

Finally, we explain that the coordinate ring of a WHS is a weighted cluster algebra via the weighted quiver obtained from the Dynkin quiver of the associated Lie group.

% Bibliography style - if using a .bib file
	%\bibliographystyle{hindawi_bib_style}
	%\bibliography{<bib file name>} % without .bib extension
\bibliographystyle{hindawi_bib_style}%amsplain

\end{document}